\providecommand{\U}[1]{\protect\rule{.1in}{.1in}}
\theoremstyle{plain}
\newtheorem{algorithm}{Algorithm}[section]
\newtheorem{thm}{Thm}
\newtheorem{corollary}[algorithm]{Corollary}
\newtheorem{definition}[algorithm]{Definition}
\newtheorem{example}[algorithm]{Example}
\newtheorem{lemma}[algorithm]{Lemma}
\newtheorem{theorem} [algorithm] {Theorem}
\newtheorem{theoremlet}[thm]{Theorem}
\newtheorem{theoremlet'}[thm]{Theorem$'$}
\newtheorem{corollarylet}[thm]{Corollary}
\newtheorem{proposition}[algorithm]{Proposition}
\newtheorem{remark}[algorithm]{Remark}
\numberwithin{equation}{algorithm}
\newtheorem{observe}[algorithm]{Observation}
\newtheorem*{thmc1}{Theorem C$'$}
\newtheorem{thmcst}[algorithm]{Cross Sectioning Theorem}
\newtheorem{ectthm}[algorithm]{Equivariant Classification Theorem}
\newtheorem*{msrconj}{Maximal Symmetry Rank Conjecture}
\newtheorem*{observe*}{Observation}
\def\qqq{\mathbb{Q}}
\def\rrr{\mathbb{R}}
\def\ccc{\mathbb{C}}
\def\zzz{\mathbb{Z}}
\DeclareMathOperator{\Fix}{Fix}
\def\codim{\textrm{codim}}
\def\bdm{\begin{displaymath}}
\def\edm{\end{displaymath}}
\def\beq{\begin{equation}}
\def\eeq{\end{equation}}
\def\bes{\begin{equation*}}
\def\ees{\end{equation*}}
\def\epcm{\end{picture}\end{center}\end{minipage}}
\def\bpcm{\begin{minipage}{80pt}\begin{center}\begin{picture}}
\def\t2{T^2}
\def\f4{F_4}
\def\g2{G_2}
\def\p2{\frac{\pi}{2}}
\def\Fix{\textrm{Fix}}
\def\rk{\textrm{rk}}
\def\dim{\textrm{dim}}
  \numberwithin{figure}{section}
\DeclareMathOperator{\curv}{curv}
\newtheorem*{ack}{Acknowledgements}
\begin{document}
\newcommand{\comment}[1]{\vspace{5 mm}\par \noindent
\marginpar{\textsc{Note}}
\framebox{\begin{minipage}[c]{0.95 \textwidth}
#1 \end{minipage}}\vspace{5 mm}\par}

\title[Torus Actions, Maximality, and Non-negative Curvature]{Torus Actions, Maximality, and Non-negative Curvature}

\author[Escher]{Christine Escher}
\address[Escher]{Department of Mathematics, Oregon State University, Corvallis, Oregon}
\email{tine@math.orst.edu}

\author[Searle]{Catherine Searle}
\address[Searle]{Department of Mathematics, Statistics, and Physics, Wichita State University, Wichita, Kansas}
\email{searle@math.wichita.edu}

\subjclass[2000]{Primary: 53C20; Secondary: 57S25} 

\date{\today}

%

\begin{abstract} Let $\mathcal{M}_{0}^n$ be the class of closed, simply connected, non-negatively curved Riemannian manifolds admitting an isometric, effective, isotropy-maximal torus action. We prove that if $M\in \mathcal{M}_{0}^n$, then  $M$ is equivariantly diffeomorphic to the free, linear quotient by a torus of  a product of spheres of dimensions greater than or equal to three. As an immediate consequence, we prove the Maximal Symmetry Rank Conjecture for  all $M\in \mathcal{M}_{0}^n$. Finally, we show
the Maximal Symmetry Rank Conjecture for simply connected, non-negatively curved manifolds holds for dimensions less than or equal to nine without additional assumptions on the torus action.

\end{abstract}
\maketitle


\section{Introduction}

The classification of closed Riemannian manifolds with positive or non-negative sectional curvature is a long-standing 
problem in Riemannian geometry.  One successful approach has been the introduction of symmetries, and an important first case to understand is that of continuous abelian symmetries, that is, of torus actions.   
Our main result, stated below in Theorem \ref{t:thma}, gives an equivariant diffeomorphism classification
of closed, simply connected, non-negatively curved Riemannian $n$-manifolds with an isometric, effective and isotropy-maximal 
$T^k$-action, with $k\geq \lfloor\frac{n+1}{2}\rfloor$. 
This result significantly strengthens a recent classification up to equivariant rational homotopy equivalence of a subclass of these manifolds (cf. Theorems D and B in Galaz-Garc\'ia, Kerin, Radeschi, and Wiemeler \cite{GGKRW}). It also 
 gives us a full generalization of work
of Wiemeler \cite{Wie}  classifying  such manifolds  in the case of equality when the manifold is even-dimensional, that is, where 
$n=2m$  and  
$k=\lfloor\frac{n+1}{2}\rfloor=m$.

\begin{theoremlet}\label{t:thma}  Let $T^k$ act isometrically and effectively on $M^n$, a closed, simply connected, non-negatively curved Riemannian manifold. Assume that the action is isotropy-maximal.  Then $M$ is equivariantly diffeomorphic to a quotient of a free linear torus action of 
$$\mathcal{Z}^m=\prod_{i<r} S^{2n_i} \times \prod_{i\geq r} S^{2n_i-1},\, \, n_i\geq 2, \,\,   \textrm{where} \,\,\,
 n\leq m\leq 3n-3k\,.$$
\end{theoremlet}

One defines a $T^k$-action on a smooth manifold, $M^n$, to be {\it isotropy-maximal},  when there exists a point in $M$ whose isotropy group is maximal, namely of dimension $n-k$. Indeed, a $T^k$-action on $M$ is isotropy-maximal if and only if there exists a minimal orbit, that is an orbit of dimension $2k-n$, which implies  
$k\geq \lfloor\frac{n+1}{2}\rfloor$. 

By work of Grove and Searle \cite{GS}, for closed, simply connected manifolds of strictly positive sectional curvature,  an isotropy-maximal action only occurs when $k=\lfloor\frac{n+1}{2}\rfloor$, 
and  such manifolds are equivariantly diffeomorphic to 
spheres or complex projective spaces with linear torus actions. 
In fact, this case corresponds to the maximal symmetry rank case, where the {\it symmetry rank} of a manifold is defined to be the rank of the isometry group of $M$.

In contrast, the corresponding maximal symmetry rank for manifolds of non-negative curvature was conjectured  to be 
approximately two-thirds the dimension of the manifold by Galaz-Garc\'ia and Searle \cite{GGS1, GGS2}.  In \cite{GGS1}, they obtained a diffeomorphism classification, but only in dimensions less than or equal to $6$.
Based on our results here, we reformulate and sharpen the conjecture (cf. \cite{GGS1}).

\begin{msrconj}\label{msrconj} Let $T^k$ act isometrically and effectively on
$M^n$, a closed, simply connected, non-negatively curved Riemannian manifold. Then  the following hold:
\begin{enumerate}
\item 
$k\leq \lfloor 2n/3\rfloor$; and 

\item When $k= \lfloor 2n/3\rfloor$, $M^n$ is equivariantly diffeomorphic to  
$\mathcal{Z}/T^m$,  where 
$$\mathcal{Z}=  \prod_{i\leq r} S^{2n_i-1} \times\prod_{i>r} S^{2n_i},$$
 with  $n_i\geq 2, \,\, r= 2\lfloor 2n/3\rfloor-n, \,\,0 \leq m \leq 2n \mod 3,$
and the $T^m$ action is  free and linear. \end{enumerate}
\end{msrconj}
We will show that the upper bound on the rank of the action in Part (1) of the Maximal Symmetry Rank Conjecture is exactly the upper bound on the rank of an isotropy-maximal action on a closed, simply connected, non-negatively curved Riemannian manifold. Combining this result with the lower bound for an isotropy-maximal action we obtain the following corollary of Theorem \ref{t:thma}.

\begin{corollarylet}\label{corb} Let $T^k$ act isometrically and effectively on $M^n$, a closed, simply connected, non-negatively curved Riemannian manifold. Assume that the action is 
isotropy-maximal. Then  $  \lfloor(n+1)/2\rfloor 
\leq k\leq \lfloor 2n/3\rfloor$.
\end{corollarylet} 

Thus,  there are two extremal cases for Theorem \ref{t:thma}: the case of torus manifolds,  where the rank of the action is  half the dimension of the manifold, and the case of those with maximal symmetry rank, which corresponds to the case when the rank  is approximately two-thirds the dimension of the manifold. 

\begin{observe*}
The so-called LVMB manifolds  are 
examples of closed, simply connected, complex manifolds,  of complex dimension $n-m$, with $2m\leq n$, admitting rank $n$ isotropy-maximal torus actions, with $n> \lfloor 2(2n-2m)/3\rfloor$ (see Ishida \cite{I}). Corollary \ref{corb} tells us that these manifolds cannot admit an invariant metric of non-negative sectional curvature.  

\end{observe*}

As a direct application of Theorem \ref{t:thma} in combination with Lemma 2.2 from \cite{I}  (Lemma \ref{ishida} in this article),  we obtain the Maximal Symmetry Rank conjecture in the presence of an isotropy-maximal 
action.

\begin{theoremlet}\label{thmc}
The Maximal Symmetry Rank Conjecture holds for an isotropy-maximal action.
\end{theoremlet}

In fact, the proof of  Theorem \ref{thmc} tells us that we may reformulate the Maximal Symmetry Rank Conjecture as follows.

\begin{msrconj} Let $T^k$ act isometrically and effectively on
$M^n$, a closed, simply connected, non-negatively curved Riemannian manifold.
Then the action is isotropy-maximal for $k=\lfloor 2n/3\rfloor$.
\end{msrconj}

Our results also have implications for the class of rationally elliptic manifolds. In Proposition \ref{p:free rank}, we find a lower bound for the free rank of a $T^k$-action, where the {\em free rank} is the dimension of the largest subtorus that can act almost freely. Combining this result with the upper bound for the free rank obtained in  Galaz-Garc\'ia, Kerin and Radeschi \cite{GGKR}, allows us to reformulate  Theorem \ref{thmc} as follows:

\begin{thmc1}  The Maximal Symmetry Rank Conjecture holds for rationally elliptic manifolds.
\end{thmc1}

Observe that if the Bott Conjecture were true, then we would no longer need to assume rational ellipticity in Theorem C$'$ nor that the action is isotropy-maximal in  Corollary \ref{corb}.  That is,  if the Bott Conjecture were true, then the Maximal Symmetry Rank Conjecture would hold for all simply connected, closed, non-negatively curved $n$-manifolds admitting an isometric $T^k$-action.

As mentioned above, Theorem \ref{t:thma} gives us important information for closed, simply connected, non-negatively curved manifolds of maximal symmetry rank.  Recall first that 
such manifolds have been classified up to diffeomorphism in dimensions less than or equal to $6$ in \cite{GGS1} and up to equivariant diffeomorphism in dimensions $4$ through $6$ in Galaz-Garc\'ia and Kerin \cite{GGK}. 
 In Proposition \ref{max} we show that a cohomogeneity three torus action must be isotropy-maximal for this class of manifolds, provided the dimension is greater than or equal to $7$. 
 We can then apply Theorem \ref{t:thma} to extend this equivariant diffeomorphism classification to higher dimensions.

\begin{theoremlet}\label{MSRNN}  The Maximal Symmetry Rank Conjecture holds for $n=7, 8, 9$.
\end{theoremlet}

As an immediate consequence of Corollary \ref{corb} combined with Proposition \ref{max}, we confirm Part (1)  of the Maximal Symmetry Rank Conjecture for dimensions less than or equal to $12$.

   \begin{corollarylet}   Part (1) of the Maximal Symmetry Rank Conjecture holds  for $n\leq 12$.   \end{corollarylet}

It is also of interest to  classify closed, simply connected, non-negatively curved Riemannian  $n$-dimensional manifolds of almost maximal symmetry rank, that is, admitting a $T^k$ isometric, effective action of rank $k=\lfloor 2n/3\rfloor -1$. 
In a separate article \cite{ES1}, the authors use some of the tools developed to prove Theorem \ref{t:thma}, in combination with results about cohomogeneity three torus actions, to obtain 
 a classification of $6$-dimensional, closed, simply connected, non-negatively curved manifolds of almost maximal symmetry rank, thereby extending the almost maximal symmetry rank classification work of Kleiner \cite{K} and Searle and Yang \cite{SY} in dimension $4$ and work of Galaz-Garc\'ia and Searle \cite{GGS2} in dimension $5$. We expect that  many of the results used in the proof of Theorem \ref{t:thma}, as well as Theorem \ref{t:thma} itself,  should admit a number of similar applications and should be particularly useful for any classification of closed, simply connected, non-negatively curved manifolds of higher dimension of maximal or almost maximal symmetry rank.

Finally, Theorem \ref{t:thma}, Corollary \ref{corb}, and Theorem \ref{thmc} can all be extended to include $T^k$-actions, for   $k=\lfloor 2n/3\rfloor$, that are {\em almost isotropy-maximal}, that is, for which there exists a point in $M$ whose isotropy group is {\em almost} maximal, namely of dimension $n-k+1$. Moreover,  a forthcoming paper of Dong, Escher, and Searle \cite{DES},  
generalizes Theorem \ref{t:thma} to
almost isotropy-maximal actions when $k<\lfloor 2n/3\rfloor$.

\subsection{Organization}

We have organized the paper in general so as to present the topological tools and results  first, followed by their geometric counterparts. In Section \ref{s2}, we describe the topological and geometric tools we will need to prove Theorem \ref{t:thma}, Corollary \ref{corb} and Theorem \ref{MSRNN}. In Section \ref{s3}, we prove a generalization of the Equivariant Cross-Sectioning Theorem of Orlik and Raymond and thereby obtain an Equivariant Classification Theorem. 
 In Section \ref{s4}, we generalize results on torus manifolds of non-negative curvature to the class of  torus manifolds with non-negatively curved quotient spaces. This class comprehends almost non-negatively curved torus manifolds with non-negatively curved quotient spaces.  In Section \ref{generallowerbound}, we find a general lower bound for the free rank of an action on an Alexandrov space with a lower curvature bound.  In Section \ref{s:6}, we prove Theorem \ref{t:thma}.  In Section \ref{s5}, we prove that an almost isotropy-maximal action of rank  $\lfloor 2n/3\rfloor$ is isotropy-maximal, thus proving that 
  Theorem \ref{t:thma}, Corollary \ref{corb}, and Theorem \ref{thmc}  all  extend to include $T^k$-actions, for   $k=\lfloor 2n/3\rfloor$, that are almost isotropy-maximal.   Finally,  in Section \ref{s7}, we prove  Corollary \ref{corb}, 
Theorem \ref{MSRNN}, and we present a significantly streamlined proof of the Maximal Symmetry Rank Conjecture for dimensions less than or equal to $6$. 
  
\begin{ack} We  thank  Karsten Grove, Michael Wiemeler, Fred Wilhelm, Wolfgang Ziller and the referees for many helpful comments and suggestions for improvement.  We are grateful to Mark Walsh for his generous help with the figures.
C. Escher acknowledges partial support  from  the Oregon State University College of Science Scholar Fund and from the Simons Foundation (\#585481, C. Escher). C. Searle  acknowledges partial support from a Wichita State University ARCs grant \#150353 and support  from grants from the National Science Foundation (\#DMS-1611780 and \#DMS-1906404), as well as  from the Simons Foundation (\#355508, C. Searle). This material is based in part upon work supported by the National Science Foundation under Grant No. DMS-1440140 while the authors were in residence at the Mathematical Sciences Research Institute in Berkeley, California, during the Spring 2016 semester.
\end{ack}


\section{Preliminaries}\label{s2}

In this section we will gather basic results and facts about transformation groups, torus actions, orbit spaces, torus manifolds and orbifolds, Alexandrov geometry, as well as results concerning $G$-invariant manifolds of non-negative and almost non-negative sectional curvature.


\subsection{Transformation Groups}\label{2.1}

Let $G$ be a compact Lie group acting on a smooth manifold $M$. We denote by $G_x=\{\, g\in G : gx=x\, \}$ the \emph{isotropy group} at $x\in M$ and by $G(x)=\{\, gx : g\in G\, \}\simeq G/G_x$ the \emph{orbit} of $x$.  Orbits will be {\em principal}, {\em exceptional}, or {\em singular}, depending on the relative size of their isotropy subgroups; namely, principal orbits correspond to those orbits with the smallest possible isotropy subgroup, an orbit is called exceptional when its isotropy subgroup is a finite extension of the principal isotropy subgroup, and singular when its isotropy subgroup is of strictly larger dimension than that of the principal isotropy subgroup.

The \emph{ineffective kernel} of the action is the subgroup $K=\bigcap_{x\in M}G_x$. We say that $G$ acts \emph{effectively} on $M$ if $K$ is trivial. The action is called \emph{almost effective} if $K$ is finite. The action is \emph{free} if every isotropy group is trivial and \emph{almost free} if every isotropy group is finite. 
As mentioned in the Introduction, the {\em free rank} of an action is the rank of the maximal subtorus that acts almost freely. In order to further distinguish between the case when the free rank corresponds to a free action and the case when it corresponds to an almost free action, we make the following definition.

\begin{definition}[{\bf Free Dimension}] Suppose that the free rank of a $T^k$-action is equal to $r$ and let $T^r$ denote a maximal subtorus of $T^k$ acting almost freely.   
We define the {\em free dimension} to be the dimension of the largest subtorus of any such $T^r$ that acts freely.
\end{definition}

There is a natural stratification of $M$ into smooth submanifolds induced by the group action. The
stratum, $S,$ of $x\in M$ is defined to be%
\[
S\equiv\left\{  y\in M\text{ }|\text{ }\exists g\in G\text{ with }G_{x}%
=gG_{y}g^{-1}\right\}  .
\]
By definition, $S$ is $G$-invariant and $G$ acts principally on $S$. 
This stratification leads to a natural stratification on the quotient space $M/G$. Let $\pi:M\rightarrow M/G$,  and let $S$ be a stratum of $x\in M$, then $\pi(S)=S^*$ is an {\em orbit stratum} of $\pi(x)$ in $M/G$. 

 We will sometimes denote the \emph{fixed point set} $M^G=\{\, x\in M : gx=x, g\in G \, \}$ of the $G$-action by $\Fix(M ; G )$. We define its dimension as $$\dim(\Fix(M;G))=\max \{\,\dim(N): \text{$N$ is a connected component of $Fix(M;G)$}\,\}.$$ One measurement for the size of a transformation group $G\times M\rightarrow M$ is the dimension of its orbit space $M/G$, also called the {\it cohomogeneity} of the action. This dimension is clearly constrained by the dimension of the fixed point set $M^G$  of $G$ in $M$. In fact, $\dim (M/G)\geq \dim(M^G) +1$ for any non-trivial, non-transitive action. In light of this, the {\it fixed point cohomogeneity} of an action, denoted by $\textrm{cohomfix}(M;G)$, is defined by
\[
\textrm{cohomfix}(M; G) := \dim(M/G) - \dim(M^G) -1\geq 0.
\]
A manifold with fixed point cohomogeneity $0$ is also called a {\it $G$-fixed point homogeneous manifold}. For product groups, we define the following refinement of 
a fixed point homogeneous action.

\begin{definition}[{\bf Nested Fixed Point Homogeneous}]\label{NFPH}
Let $G=H_1\times \cdots \times H_l=H^l$ 
act isometrically and effectively on $M^n$. Without loss of generality, we suppose that the $H_i$  are ordered so that the following holds.
The subgroup $H_1$ acts fixed point homogeneously on $M$ and we denote by $N_1$  the connected component of largest dimension in $M^{H_1}$.
The subgroup $H_2$ acts fixed point homogeneously on $N_1$ and  we denote by $N_2$  the connected component of largest dimension in $N_1^{H_2}$.
Continuing in this fashion, we assume that the subgroup $H_i$ acts fixed point homogeneously on $N_{i-1}$ and denote by $N_{i}$  the connected component of largest dimension in $N_{i-1}^{H_i}$ for each $1\leq i\leq l$.

We will 
 call a manifold 
{\em nested $H$-fixed point homogeneous} when these conditions hold, and note that this implies the existence of a tower of nested $H$-fixed point sets, 
$$N_l\subset N_{l-1}\subset \cdots \subset N_1\subset N_0 = M.$$ 

\end{definition}
\begin{observe}
Since each $N_i\subset N_{i-1}$ is of codimension $d$, the number $l$ of $H$ factors is restricted by the dimension $n$ of the manifold, namely, $ld\leq n$. 
\end{observe} 

\begin{example} The sphere with an isometric torus action of maximal symmetry rank is nested $S^1$-fixed point homogeneous.
\end{example}


\subsection{Torus Actions} \label{s2.1}
 In this subsection we will recall notation and facts about smooth $G$-actions on smooth $n$-manifolds, $M$, in the special case when $G$ is a torus.
 We first recall the definition of an {\em isotropy-maximal torus action}, introduced in  \cite{I} as {\em maximal} and renamed in \cite{GGKRW} as {\em slice-maximal}.    

We have chosen  instead to 
call these actions {\em isotropy-maximal}, since the rank of the largest possible isotropy subgroup  for a cohomogeneity $n-k$ torus action is $n-k$, corresponding to an isotropy-maximal action.

\begin{definition}[{\bf Isotropy-Maximal Action}]  Let $M^n$ be a connected manifold with an effective $T^k$-action.  
\begin{enumerate}  
\item We call the $T^k$-action on $M^n$ {\em isotropy-maximal} if there is a point $x \in M$ such that 
the dimension of its isotropy group is $n-k$, that is,  $\dim(T^k_x) = n-k$.
\item  The orbit $T^k(x)$ through $x \in M$ is called {\em minimal} if $\dim(T^k(x)) = 2k-n$.
\end{enumerate} 
\end{definition}

Note that the action of $T^k$ on $M$ is isotropy-maximal if and only if there exists a minimal orbit $T^k(x)$.  The following lemma of \cite{I} shows that an isotropy-maximal 
action on $M$ implies that there is no larger torus acting on $M$ effectively.

\begin{lemma}\label{ishida} \cite{I}  Let $M$ be a connected manifold with an effective $T^k$-action.  Let $T^l \subset T^k$ be a subtorus of $T^k$.  Suppose 
that the action of $T^k$ restricted to $T^l$ on $M$ is isotropy-maximal.  Then $T^l = T^k$.  
\end{lemma}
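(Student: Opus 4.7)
The plan is to deduce $T^l = T^k$ from a single standard bound on isotropy dimensions, namely that for every effective smooth torus $T^k$-action on a connected $n$-manifold $M$ and every $x \in M$ one has $\dim(T^k)_x \le n - k$. Granting this bound, I pick a point $x \in M$ witnessing maximality of the $T^l$-action, so that $\dim(T^l)_x = n - l$. Since $T^l \subseteq T^k$, the isotropy subgroups satisfy $(T^l)_x = T^l \cap (T^k)_x \subseteq (T^k)_x$, hence $n - l \le \dim(T^k)_x \le n - k$, giving $k \le l$. Combined with $l \le k$ (from the inclusion $T^l \subseteq T^k$), this yields $l = k$, and therefore $T^l = T^k$.

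The remaining step is to verify the isotropy bound itself. At $x \in M$, a slice $S_x$ to the orbit has dimension $\dim S_x = n - \dim T^k(x) = n - k + \dim(T^k)_x$. By the equivariant tubular neighborhood theorem, effectiveness of the $T^k$-action on $M$ forces the slice representation of $(T^k)_x$ on $S_x$ to be faithful, because any element of $(T^k)_x$ acting trivially on $S_x$ would act trivially on a $T^k$-invariant open neighborhood of the orbit, contradicting effectiveness on the connected manifold $M$. The identity component of $(T^k)_x$ is a torus $T^p$ with $p = \dim(T^k)_x$, and a faithful real representation of $T^p$ decomposes into weight spaces and must contain at least $p$ nontrivial real two-dimensional weight spaces, since their weights are required to generate the character lattice of $T^p$. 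Therefore $\dim S_x \ge 2 \dim(T^k)_x$, which rearranges to $\dim(T^k)_x \le n - k$.

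The only ingredient is this standard slice-representation bound, so I do not anticipate any real obstacle. The one subtlety worth flagging in the write-up is that the restricted $T^l$-action is automatically effective (any $t \in T^l$ fixing $M$ pointwise would lie in the trivial ineffective kernel of the $T^k$-action), so the hypothesis ``maximal restricted action'' genuinely produces a point with $\dim(T^l)_x = n - l$ and the resulting containment $(T^l)_x \subseteq (T^k)_x$ can be combined with the effectiveness bound for $T^k$ without further qualification.
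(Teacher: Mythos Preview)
Your argument is correct. The paper does not actually give a proof of this lemma; it is simply quoted from Ishida \cite{I}, so there is no in-paper proof to compare against. Your approach---bounding $\dim(T^k)_x \le n-k$ via faithfulness of the slice representation and then sandwiching $n-l = \dim(T^l)_x \le \dim(T^k)_x \le n-k$---is exactly the standard one (and is essentially Ishida's own proof). The final step from $l=k$ to $T^l = T^k$ is fine since $T^l$ is a connected subgroup of $T^k$ of the same dimension.
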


We also define the concept of an almost isotropy-maximal action.
\begin{definition}[{\bf Almost Isotropy-Maximal Action}]  Let $M^n$ be a connected manifold with an effective $T^k$-action.  
\begin{enumerate}  
\item We call the $T^k$-action on $M^n$ {\em almost isotropy-maximal} if there is a point $x \in M$ such that 
the dimension of its isotropy group is $n-k-1$, that is,  $\dim(T^k_x) = n-k-1$.
\item  The orbit $T^k(x)$ through $x \in M$ is called {\em almost minimal} if $\dim(T^k(x)) = 2k-n+1$.
\end{enumerate} 
\end{definition}

Note that the action of $T^k$ on $M$ is almost isotropy-maximal if and only if there exists an almost minimal orbit $T^k(x)$.  
Lemma \ref{ishida} generalizes to almost isotropy-maximal actions.

\begin{lemma} Let $M$ be a connected manifold with an effective $T^k$-action.  Let $T^l \subset T^k$ be a subtorus of $T^k$.  Suppose 
that the action of $T^k$ restricted to $T^l$ on $M$ is almost isotropy-maximal.  Then $l=k$ or $l=k-1$.
\end{lemma}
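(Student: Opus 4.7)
The plan is to imitate Ishida's proof of Lemma \ref{ishida} and track how the numerology shifts by one when ``maximal'' is replaced by ``almost maximal''. First I would unpack the hypothesis: by definition of an almost maximal $T^l$-action, there exists a point $x \in M$ whose $T^l$-orbit has dimension $2l - n + 1$, equivalently whose isotropy satisfies $\dim((T^l)_x) = n - l - 1$. Since $T^l \subset T^k$ gives $l \leq k$ automatically, the content of the lemma is the lower bound $l \geq k - 1$.

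The key input is the slice representation of $T^k$ at $x$. On the one hand, the inclusion $(T^l)_x \subset (T^k)_x$ yields
\[
\dim((T^k)_x) \;\geq\; n - l - 1 .
\]
On the other hand, because $T^k$ is abelian and acts effectively on the connected manifold $M^n$, the linear action of $(T^k)_x$ on the normal slice $\nu_x$ to the orbit $T^k(x)$ is faithful: any element of the kernel would fix a whole tubular neighborhood of the orbit, since the slice theorem models the tube as $T^k \times_{(T^k)_x} D$ on which an isotropy element $h$ acts by $h \cdot [g', v] = [g', h \cdot v]$ (using abelianness of $T^k$), and it would then fix all of $M$ by connectedness. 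Since a torus acting faithfully on $\mathbb{R}^m$ has dimension at most $\lfloor m/2 \rfloor$, setting $d := \dim((T^k)_x)$ and using $\dim \nu_x = n - k + d$ yields $d \leq \tfrac{1}{2}(n - k + d)$, i.e., $d \leq n - k$.

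Combining the two bounds gives $n - l - 1 \leq n - k$, so $l \geq k - 1$, completing the proof. The only step with any real content is this dimension bound on $(T^k)_x$, which hinges on the faithfulness of the slice representation, and that is exactly the same input driving the proof of Lemma \ref{ishida}; no curvature hypothesis is used, and the sole effect of weakening ``maximal'' to ``almost maximal'' is to shift the isotropy dimension at a minimal orbit from $n - l$ to $n - l - 1$, which in turn shifts the conclusion from $l = k$ to $l \in \{k - 1, k\}$.
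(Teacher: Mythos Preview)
Your proof is correct and is precisely the argument the paper intends: the paper does not spell out a proof but simply remarks that ``a similar statement about almost maximal actions is also true,'' pointing to Ishida's Lemma~\ref{ishida}, and your write-up is exactly that adaptation, with the shift from isotropy dimension $n-l$ to $n-l-1$ producing the weaker conclusion $l\in\{k-1,k\}$. The key step---faithfulness of the slice representation of $(T^k)_x$ on the normal space, forcing $\dim((T^k)_x)\le n-k$---is carried out correctly.
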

In \cite{I},   the following properties of an isotropy-maximal action are obtained.

 \begin{lemma} \cite{I}\label{props}   Let $M$ be a connected manifold with an isotropy-maximal $T^k$ action.  Let $T^k(x)$ be a minimal orbit and let $T^k_x$ be the corresponding isotropy subgroup of $x\in M$.  Then 
 \begin{enumerate}
 \item The isotropy group $T^k_x$ at $x$ is connected.
 \item $T^k(x)$ is a connected component of the fixed point set of $T^k_x$.
 \item  Each minimal orbit is isolated.  
  \end{enumerate}
  In particular, there are finitely many minimal orbits if $M$ is compact.
 \end{lemma}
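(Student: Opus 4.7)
The plan is to apply the slice theorem at a minimal orbit and reduce the three claims to a standard representation-theoretic fact about torus actions on Euclidean space. Let $x \in M$ have minimal orbit, so $\dim G_x = n-k$ and $\dim G(x) = 2k-n$. By the slice theorem, an equivariant neighborhood of $G(x)$ is diffeomorphic to $G \times_{G_x} V$, where $V$ is the slice representation of $G_x$ at $x$. A dimension count gives $\dim V = n - \dim G(x) = 2(n-k) = 2\dim G_x$. The first step is to note that the $G_x$-action on $V$ must be effective: if some $g \in G_x$ acted trivially on $V$, then the fixed set $M^{\langle g\rangle}$, a closed submanifold, would contain an open neighborhood of $x$, forcing $g = 1$ by connectedness of $M$ and effectiveness of the $G$-action.

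For \textbf{(1)}, let $T_0 = (G_x)_0 \cong T^{n-k}$. Since $T_0$ acts effectively on $V = \R^{2(n-k)}$ with a dimension match, $V$ admits a complex structure making the representation isomorphic to $\bigoplus_{i=1}^{n-k} \C_{\alpha_i}$ for some weights $\alpha_i$; effectiveness forces the $\alpha_i$ to form a $\Z$-basis of $\mathrm{Hom}(T_0,S^1)$, so $T_0$ acts on $V = \C^{n-k}$ as the standard coordinate torus. Now any $g \in G_x$ commutes with $T_0$ (since $G$ is abelian), hence acts as a diagonal element of $(\C^*)^{n-k}$, and compactness refines this to diagonal in $T^{n-k}$. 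Thus the action of $g$ on $V$ agrees with that of some $t \in T_0$, so $g t^{-1}$ lies in the kernel of the slice representation, which is trivial by effectiveness. Hence $g \in T_0$ and $G_x = T_0$ is connected.

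For \textbf{(2)}, we show that $V^{G_x} = 0$, which immediately gives $(G \times_{G_x} V)^{G_x} = G \times_{G_x} V^{G_x} = G(x)$, identifying $G(x)$ as a connected component of $M^{G_x}$. Decompose $V = V^{G_x} \oplus W$; the $G_x$-action on $W$ is effective (else some element would act trivially on all of $V$) and has no nonzero fixed vectors. Running the weight argument of step (1) on $W$ yields $\dim_\R W \geq 2\dim G_x = 2(n-k) = \dim V$, forcing $V^{G_x} = 0$.

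For \textbf{(3)}, suppose $G(y)$ is another minimal orbit in the tubular neighborhood, with $y = [g,v] \in G \times_{G_x} V$. Since $G$ is abelian, $G_y = G_v \subset G_x$, and the minimality hypothesis forces $\dim G_v = n-k$, so $G_v = G_x$ (using connectedness from (1)). But then $v \in V^{G_x} = \{0\}$ by (2), so $y \in G(x)$ and $G(y) = G(x)$. This shows minimal orbits are isolated, and compactness then implies there are only finitely many. The main technical obstacle is the representation-theoretic lemma in step (1) — that an effective $T^m$-action on $\R^{2m}$ with only the origin fixed must realize $T^m$ as the standard coordinate torus on $\C^m$ — but this is classical and the matching-dimension hypothesis makes the weight/lattice argument go through cleanly.
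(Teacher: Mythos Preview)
The paper does not actually prove this lemma; it is quoted from Ishida \cite{I} without argument. So there is no ``paper's own proof'' to compare against.

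That said, your proof is correct and is essentially the standard argument one would expect for this statement. The key points all go through: the dimension count $\dim V = 2\dim G_x$ combined with effectiveness of the slice representation forces $V^{(G_x)_0}=0$ and the weights to form a $\mathbb{Z}$-basis of the character lattice, which pins down the $(G_x)_0$-action as the standard coordinate torus on $\mathbb{C}^{n-k}$; the centralizer argument then absorbs the component group, giving (1). Parts (2) and (3) follow cleanly from $V^{G_x}=0$ as you wrote. One minor expository point: in (1) you implicitly use $V^{T_0}=0$ before it is stated in (2), but the effectiveness-plus-dimension argument you invoke there already yields it, so there is no circularity. The final remark that the ``representation-theoretic lemma'' is classical is fair; it is exactly the statement that an effective $T^m$-representation on $\mathbb{R}^{2m}$ has weights forming a lattice basis, which is immediate from counting.
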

 
 \begin{observe} Note that Parts (2) and (3) of Lemma \ref{props} are equivalent. Moreover, via the proof of Lemma \ref{props} in \cite{I}, or more simply, via the Slice theorem and the Maximal Symmetry Rank Theorem of Grove and Searle \cite{GS} applied to the normal sphere of an almost  minimal orbit, it is easy to see that Property (1)  also holds for an almost isotropy-maximal action. Note however that for almost isotropy-maximal actions, neither Part (2) nor (3) hold in general. Consider an almost isotropy-maximal $T^2$ action on $S^1\times S^3$ with the first circle acting freely on $S^1$ and the second circle fixing a circle in the $S^3$. Then the almost minimal orbits, $T^2/T^1$, are strictly contained in  $\Fix(S^1\times S^3; T^1)=T^2$, and thus are not isolated.
 \end{observe}

An important class  of $T^k$-actions on $n$-dimensional manifolds is the class of  {\em locally standard} torus actions, which we now define.

 \begin{definition}[{\bf Locally Standard}]\label{ls} 
A $T^k$ action on $M^n$ is called {\em locally standard} if for 
 each point $x \in M$, there is a neighborhood which is $T^k$-equivariantly diffeomorphic to
$$T^r \times W \times \rrr^m,$$
where $r=k-\dim(T^k_x)$, $W$ is a faithful $T^k_x$-representation of real dimension $2\dim(T^k_x)$,   and $T^k\cong T^r\times T^k_x$ acts trivially on $\rrr^m$, $T^r$ acts trivially on $W$, and $T^k_x$ acts trivially on $T^r$.
\end{definition}

Finally, we recall the definitions of polar and infinitesimally polar group actions (see, for example,  \cite{GZ} and \cite{LT}, respectively).

\begin{definition} Let $G$ be a compact Lie group acting isometrically on an $n$-dimensional manifold, $M$. Then the action is said to be {\em polar} if there exists an immersed submanifold, $\Sigma$, that is, a so-called section, that meets all orbits orthogonally. A  $G$-action on $M$ is said to be {\em infinitesimally polar} if the slice representation at every point is polar.
\end{definition}


\subsection{Orbit Spaces}

To any orbit space, $M/G$, we may assign isotropy information, in the form of weights.
We recall the definition of a weighted orbit space for a smooth $G$-action on $M$.  

\begin{definition}[{\bf Weighted Orbit Space}] Let $G$ act smoothly on an $n$-manifold $M$ with orbit space $M^*=M/G$.  To each orbit in $M^*$ there is associated  
a certain orbit type which is characterized by the isotropy group of the points of the orbit together with the slice representation at the given orbit.   This orbit space together with its orbit types and slice representation is called a {\em weighted  orbit space}.  
\end{definition}

Let $G=T^k$ act isotropy-maximally on $M^n$. Since the action is isotropy-maximal, $\partial(M^n/G)$ is a union of codimension one subspaces that correspond to the images of codimension two fixed point set components of circle subgroups of $T^k$.  We note that it is enough to specify the weights of these codimension one subspaces, which we will call  {\em  facets}, 
 and together with a description of the orbit space, one then obtains a complete description of all orbit types.

Let
\begin{eqnarray*}
p&: & \rrr^k \longrightarrow  T^k \\
&&(x_1,\dots,x_k) \mapsto (e^{2\pi x_1 i},\dots, e^{2\pi x_k i})\\
\end{eqnarray*}
 be the universal covering projection 
and let $H$ be a circle subgroup of $T^k$.  Since each component of $p^{-1}(H)$ is a line containing at least two 
integer lattice points of $\rrr^k$, it is natural to parametrize $H$ as follows.  Let $a_1, \dots, a_k$ be relatively prime integers and 
let ${\mathbf a}=(a_1, \dots, a_k) \in \zzz^k$. We call the $a_i\in \zzz$ the weights of the corresponding circle isotropy subgroup $H(\mathbf a)$.
  Then $H(\mathbf a) = H(a_1,\dots,a_k)= \{(x_1,\dots,x_k)|x_i=a_i t \mod \zzz, \,0 \le t <1, \, i=1,\dots k\}$.  With this notation $H(\mathbf a)$
  is the image of a line in $\rrr^k$ through the origin and the lattice point $\mathbf a = (a_1,\dots,a_k)$ under the projection $p$.  
  We define the matrix of $m$ isotropy groups $H(\mathbf a_1),\dots, H(\mathbf a_m)$ to be the following $m \times k$-matrix:
  \[
M_{(\mathbf a_1,\dots, \mathbf a_m)} = 
 \begin{bmatrix}
    a_{11} & a_{12} & a_{13} & \dots  & a_{1k} \\
    a_{21} & a_{22} & a_{23} & \dots  & a_{2k} \\
    \vdots & \vdots & \vdots & \ddots & \vdots \\
    a_{m1} & a_{m2} & a_{m3} & \dots  & a_{mk}
\end{bmatrix}.
\]
We will denote the weights of a torus action via its matrix $M_{(\mathbf a_1,\dots, \mathbf a_m)}$.

We will see that the quotient $M/T$ of an isotropy-maximal torus action is 
an 
{\it $n$-manifold with corners}, that is, a Hausdorff space together with a maximal atlas of local charts onto open subsets of the simplicial cone, $[0, \infty)^n\subset \rrr^n$, so that the overlap maps are homeomorphisms which preserve codimension (see, for example, \cite{D}).

We now recall the definition of a weight preserving diffeomorphism from  \cite{OR2}, adapted to the language of manifolds with corners, noting that a {\em weakly smooth} map between manifolds with corners is as defined in Joyce \cite{J}. Observe that a  weakly smooth map is smooth when restricted to the interior of a manifold with corners.

\begin{definition}[\bf Weight Preserving Diffeomorphism]\label{wpd} Let $M^*_1$ and $M^*_2$ be manifolds with corners obtained as the $G$-quotients of $M_1$ and $M_2$, respectively. We say that a weakly smooth, strata-preserving map with weakly smooth inverse from $M^*_1$ to $M^*_2$ which carries the weights of $M^*_1$  isomorphically onto the weights of $M^*_2$ is a {\em weight preserving diffeomorphism}.
\end{definition}

\begin{remark}
For an isotropy-maximal $T^k$-action on $M^n_i$, let $A_i$, $i=1, 2$ be the set of weights for each $M^*_i$ and assume that the $M^*_i$ are weight-preserving diffeomorphic. Then the weight-preserving diffeomorphism is given by
$$\phi: (M_1^*, A_1)\mapsto (M_2^*, A_2),$$
where $A_2=V\cdot A_1\cdot U$, with $V\in GL(k; \zzz)$ and $U\in GL(m; \zzz)$, $m$ is the number of strata of $M_1^*$ with $T^1$ isotropy, and  $\phi|_{M_1^*}$ is a  weakly smooth map with weakly smooth inverse.
\end{remark}

In the following proposition and corollary, we see that for a restricted class of torus actions 
 the isotropy subgroups   will span the torus. They are straightforward generalizations of Theorem 1.6  and Corollary 1.7 in  Kim, McGavran and Pak, \cite{KMP}, respectively and we leave the proofs to the reader.

\begin{proposition}\label{nonfree}
Let $T^k$ act on $M^n$ effectively, where $M^n$ is a closed, simply connected manifold of dimension $n$.  Suppose further that all isotropy subgroups are connected, all singular orbits correspond to points on the boundary of the quotient space,  and  $M^n/T^k=D^{n-k}$. Then no subgroup $T^l$, $k> l\geq 1$, can contain all elements of $T^k$ that act nonfreely on $M^n$.
\end{proposition}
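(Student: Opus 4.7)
The plan is to derive the result from the classical fundamental-group computation for such a torus action:
\[
\pi_1(M)\;\cong\;\zzz^k\big/\langle \mathbf{a}_F:F\text{ is a facet of }D^{n-k}\rangle,
\]
where $\mathbf a_F\in\zzz^k=\pi_1(T^k)$ is the primitive lattice vector representing the circle isotropy $G_F=S^1(\mathbf a_F)$ attached to $F$. Granted this formula, simple-connectedness of $M$ forces the vectors $\mathbf a_F$ to generate $\zzz^k$. If all nonfree elements lay inside some $T^l\subsetneq T^k$ with $1\le l<k$, then every $G_F$ would be contained in $T^l$, so each $\mathbf a_F$ would lie in the rank-$l$ primitive sublattice of $\zzz^k$ corresponding to $T^l$, contradicting the fact that the $\mathbf a_F$ span $\zzz^k$.

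For the formula itself, I would first argue that the principal isotropy is trivial. Effectiveness together with the hypothesis that all isotropy groups are connected implies this, since in an abelian action the connected principal isotropy fixes every principal orbit pointwise and hence lies in the kernel. Combined with the absence of exceptional orbits, this shows that $M^{\mathrm{reg}}:=\pi^{-1}(\Int D^{n-k})$ is a principal $T^k$-bundle over a contractible base, hence trivial, giving $\pi_1(M^{\mathrm{reg}})\cong\zzz^k$.

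Next, each orbit-type stratum of codimension $c$ in $D^{n-k}$ (necessarily lying in $\partial D^{n-k}$ by hypothesis) has isotropy of rank $c$, and so its preimage in $M$ has codimension $2c$; in particular the codim-$2$ strata of $M$ are precisely the preimages of facets. Excising from $M$ the union of singular strata of codimension $\ge 4$ does not affect $\pi_1$, and a Seifert--van Kampen argument around the now locally disjoint codim-$2$ submanifolds produces $\pi_1(M)\cong\pi_1(M^{\mathrm{reg}})/N$, with $N$ the subgroup generated by meridian loops. Via the local product structure of a tubular neighborhood of the codim-$2$ preimage of $F$, such a meridian is freely homotopic in $M^{\mathrm{reg}}$ to the generator of $G_F\subset T^k$ in the fibre, so $N$ is generated by the $\mathbf a_F$, completing the formula.

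The main technical obstacle is handling the Seifert--van Kampen step rigorously when facet preimages meet along higher-codimension strata; the standard device of excising the codim-$\ge 4$ part of $M$ first reduces to the locally disjoint case, exactly as in the proof of Theorem 1.6 of \cite{KMP} for cohomogeneity two, and adapts verbatim to arbitrary cohomogeneity.
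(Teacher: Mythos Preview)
Your proof is correct and follows precisely the route the paper intends: the paper omits the proof entirely, stating only that it is a straightforward generalization of Theorem~1.6 of \cite{KMP}, and your argument via the fundamental-group formula $\pi_1(M)\cong\zzz^k/\langle\mathbf a_F\rangle$ is exactly that generalization. The only point to tighten is the assertion that a codimension-$c$ stratum in $D^{n-k}$ has rank-$c$ isotropy; what you actually need (and what follows easily) is that facet isotropy is a circle and that all other singular strata have preimage of codimension at least three in $M$, which suffices for the Seifert--van Kampen step.
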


\begin{corollary}\label{c:KMP} With the same hypotheses as in Proposition \ref{nonfree},  the isotropy subgroups of the $T^k$-action on $M^n$ span  $T^k$ and there are at least $k$ different circle isotropy subgroups of $T^k$. 
\end{corollary}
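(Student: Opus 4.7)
The plan is to prove the two assertions of the corollary separately, with Proposition \ref{nonfree} providing the main leverage in each case.

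For the first assertion, I would let $H\subseteq T^k$ denote the subgroup generated by all isotropy subgroups of the action. By the connectedness hypothesis, each isotropy is a subtorus, and so $H$ itself is a closed connected subgroup of $T^k$, i.e., a subtorus $T^l$ with $l\leq k$. Every non-free element of $T^k$ lies in some isotropy subgroup and therefore in $H$, so Proposition \ref{nonfree} forbids $l<k$, forcing $H=T^k$.

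For the second assertion, I would strengthen the previous step by showing that the \emph{circle} isotropies alone generate $T^k$; since $m$ distinct circles in $T^k$ always generate a subtorus of dimension at most $m$, this automatically yields at least $k$ circle isotropy subgroups. To see that circles suffice, I would argue locally via the slice theorem. Effectiveness of the $T^k$-action, combined with the assumption that every isotropy is connected, implies that the principal isotropy is trivial, since a nontrivial connected principal isotropy would fix a dense open set of $M$ and hence act trivially on all of $M$. Consequently, at any point $x$ with isotropy $T^l_x$ of positive dimension, the slice representation is a faithful complex $T^l_x$-representation $V\cong\bigoplus_{i=1}^s V_{\chi_i}$ whose characters $\chi_i$ have trivial common kernel. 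Selecting any $l-1$ linearly independent characters from this list, their kernels intersect in a circle inside $T^l_x$ which arises as the $T^k$-isotropy of a nearby point whose slice coordinates are nonzero in exactly the chosen summands. Ranging over all such choices produces circle isotropies generating $T^l_x$, and combined with the first step this forces the circle isotropies to generate $T^k$.

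The step I expect to require the most care in the actual write-up is the slice-theorem step: one must check carefully that the slice representation is faithful and that among its characters one can always select enough linearly independent $(l-1)$-tuples to recover $T^l_x$ as a subgroup generated by circle isotropies of the ambient action. The remaining pieces are essentially formal, with the bound $m\geq k$ following immediately from the dimension estimate for subtori generated by $m$ circles.
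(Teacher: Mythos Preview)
The paper does not actually prove Corollary~\ref{c:KMP}: it declares both Proposition~\ref{nonfree} and this corollary to be straightforward generalizations of Theorem~1.6 and Corollary~1.7 in \cite{KMP} and explicitly leaves the proofs to the reader. So there is no argument in the paper to compare yours against.

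Your proposal is correct and is the natural way to fill in the omitted details. Two small points are worth tightening in the write-up. First, to conclude that the subgroup $H$ generated by all isotropy subgroups is itself a subtorus, invoke compactness of $M$ to get only finitely many isotropy subgroups; then $H$ is the image of a finite product of subtori under the multiplication map, hence compact and connected, and Proposition~\ref{nonfree} applies. Second, the slice representation may contain a trivial real summand, so it is not literally a complex representation; what you need (and what your principal-isotropy argument establishes) is that $T^l_x$ acts faithfully on the slice, so the nontrivial weights have trivial common kernel in $T^l_x$. After selecting $l$ linearly independent weights, the $l$ one-dimensional subgroups obtained by intersecting the kernels of any $l-1$ of them have Lie algebras spanning that of $T^l_x$, and the hypothesis that all isotropies are connected is precisely what guarantees each such intersection is a circle rather than a circle times a finite group. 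With these clarifications your argument goes through as written.
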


 The condition for $k$ circle subgroups
to be {\em generators of $T^k$} is contained in the following lemma from \cite{O}. 

\begin{lemma}\cite{O} \label{oh2}
  The $k$ circle subgroups generate $T^k$, that is, $H(\mathbf a_1) \times \cdots \times H(\mathbf a_k) \cong T^k$ if and only if 
 $\det(M_{(\mathbf a_1,\dots, \mathbf a_k)}) = \pm 1$.   

\end{lemma}

We  recall the definition of a conical orbit structure of a $G$-action on a space $X$.  

\begin{definition}[{\bf Conical Orbit Structure}] \label{defncone}
 Denote by  $K^{\circ}(Y)= Y \times [0,1) / (Y \times \{0\})$
the open cone over a space $Y$ and its closure by $K(Y)$.
 The orbit structure of  $X$ is called {\em conical},  if $X^*$ is homeomorphic to an open cone $K^{\circ}(Y)$ with constant orbit type along rays less 
           the vertex, $p^*$, and we say that $X^*$ is a {\em conical section}.  
 \end{definition}

For the case of a $T^k$-action on a closed manifold $M^n$,  the following direct consequence of the Slice Theorem was obtained in McGavran \cite{Mc} (see also \cite{Mc2}), classifying   a neighborhood of point in $M$ with isotropy group $T^l$, $l \le k$. 
\begin{theorem}\cite{Mc, Mc2} \label{cone}  Suppose $T^k$ acts locally smoothly on a closed manifold $M^n$.  Suppose $p \in M^n$ has isotropy group 
$T^l$,  $0\le l \le k$.
Let $X$ be a closed invariant neighborhood of $p$ in $M$ such that $X^* = K(Y)$.  Suppose the inverse image in $X$ of $K^{\circ}(Y)$ has conical orbit structure. 
Then 
$X$ is equivariantly homeomorphic to $T^{k-l} \times D^{n-k+l}$.  
\end{theorem}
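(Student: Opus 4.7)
The approach is via the slice theorem for locally smooth torus actions, combined with a splitting of $T^k$ adapted to the isotropy subgroup $T^l$, followed by identifying the given neighborhood $X$ with the resulting tubular model by means of the conical-orbit-structure hypothesis.

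First, local smoothness at $p$ with isotropy $T^l$ yields an invariant tubular neighborhood $U$ of the orbit $T^k(p)\cong T^k/T^l$ that is equivariantly homeomorphic to the twisted bundle $T^k\times_{T^l}D^{n-k+l}$, where $T^l$ acts on $D^{n-k+l}$ by an orthogonal slice representation fixing the origin. Because $T^l$ is a closed connected subtorus of $T^k$, its exponential-kernel lattice is a primitive sublattice of $\ker(\exp)\cong\zzz^k$, so one can choose a complementary subtorus $T^{k-l}$ with $T^k\cong T^l\times T^{k-l}$. Under this splitting the twisted product trivializes as
\[
T^k\times_{T^l}D^{n-k+l}\;\cong\;T^{k-l}\times D^{n-k+l},
\]
with the $T^k$-action given by $T^{k-l}$-translations on the first factor and the slice representation of $T^l$ on the second. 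Thus the model tubular neighborhood $U$ already has the product form asserted by the theorem.

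Next, I would identify the given $X$ with $U$ using the conical hypothesis on $X^*$. The orbit space $U^*=D^{n-k+l}/T^l$ is itself a closed cone over $\partial D^{n-k+l}/T^l$ with vertex $p^*$ and with orbit types constant along rays, as is automatic for any linear torus representation on a disk. This matches $X^*=\overline{C(Y)}$ and its prescribed conical orbit structure. After matching the boundary links, so that $\partial X$ and $\partial U$ project to the same weighted $(n{-}k{+}l{-}1)$-dimensional link in the orbit space, the equivariant radial contraction of $X^*$ toward $p^*$ lifts, orbit by orbit, to an equivariant radial identification $X\to U$ which fixes the core orbit $T^k(p)$ and carries concentric link shells to concentric link shells. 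Verifying that this lift is a homeomorphism then yields the required equivariant homeomorphism $X\cong T^{k-l}\times D^{n-k+l}$.

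The main obstacle is this last step: using constancy of isotropy along cone-rays to lift an equivariant radial contraction from the weighted orbit space to a genuine equivariant homeomorphism between $X$ and the tubular model $U$. The slice theorem and the torus splitting are standard; the subtle point is that the conical orbit-type hypothesis is precisely what trivializes the equivariant fibration along each ray, which is exactly the ingredient needed to extend the equivariant identification from the link $\partial X$ inward through the cone down to the core orbit $T^k(p)$.
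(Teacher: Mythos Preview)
The paper does not give its own proof of this theorem; it is quoted as a result of McGavran \cite{Mc, Mc2} and then used as a black box in the proofs of Lemmas~\ref{l:cs1} and~\ref{cs4}. So there is no in-paper argument to compare against.

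That said, your outline is the standard one and is essentially correct. The slice theorem together with the splitting $T^k\cong T^l\times T^{k-l}$ (valid because a closed connected subtorus has primitive lattice) immediately produces the product form $T^{k-l}\times D^{n-k+l}$ for \emph{some} tubular neighborhood $U$ of the orbit $T^k(p)$; the only real content is your step~3, identifying the given $X$ with $U$. You have correctly isolated the mechanism: constancy of orbit types along rays means the level sets $\pi^{-1}(Y\times\{t\})$, $t\in(0,1]$, form an equivariant isotopy of the link, so by the covering homotopy theorem for locally smooth actions (Palais; see Bredon \cite{Br}, Ch.~II) the bundle $X\setminus T^k(p)\to(0,1]$ is equivariantly trivial, and coning back in the vertex orbit gives $X\cong U$. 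If you want the argument to be complete rather than a sketch, replace the phrase ``lifts orbit by orbit'' with an explicit appeal to this covering homotopy theorem; as written, that step is asserted rather than justified.
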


\begin{remark}\label{conethm} If we assume the $T^k$ action is locally standard in Theorem \ref{cone}, then the action of the isotropy subgroup, $T_p=T^l$, on its normal slice, $D^{n-k+l}$, is polar, and so admits a section, that is, the $T^k$-action is infinitesimally polar.
\end{remark}


\subsection{Torus Manifolds} An important subclass of manifolds admitting an effective torus action are the so-called {\it torus manifolds}. 
For more details on torus manifolds, we refer the reader to Hattori and Masuda \cite{HM}, Masuda and Panov \cite{MP}, and  Buchstaber and Panov \cite{BP1}. 

Torus manifolds arose as a generalization of the concept of a {\it toric variety}, which is a normal algebraic variety, $M$, containing the algebraic torus $(\mathbb{C}^*)^n$ as a Zariski open subset in such a way that the natural action of ($\mathbb{C}^*)^n$ on itself extends to an action on $M$ (see Buchstaber and Panov \cite{BP} for more details). In particular, 
in \cite{DJ}, a topological counterpart to non-singular projective toric varieties was introduced, now called {\it quasitoric manifolds}, see Definition \ref{qt} below.  Originally they were named ``toric manifolds" but then were renamed in \cite{BP} since the term toric manifold is reserved in algebraic geometry for a ``non-singular toric variety".

 \begin{definition}[{\bf Torus Manifold}] A {\em torus manifold} $M$ is a $2n$-dimensional closed, connected, orientable, smooth manifold with an effective smooth action of 
an $n$-dimensional torus $T$ such that $M^T\neq \emptyset$.
\end{definition}

The $T^n$-action on $M^{2n}$ is an isotropy-maximal action. Further, $M^{2n}$ is  a $S^1$-fixed point homogeneous manifold and moreover, the action on $T^n$ on $M^{2n}$ is a nested $S^1$-fixed point homogeneous action, that is, we can find a tower of nested fixed point sets for each $p\in M^T$:
$$ \{p\}\subset F^2\subset \cdots \subset F^{2n-2}\subset M^{2n}.$$

However, not all torus manifolds are locally standard,  see  
Section 11 of Fukukawa, Ishida, and Masuda \cite{FIM} for examples.  The orbit space of a locally standard action is a compact connected $n$-manifold with corners with the property that 
every codimension $k$-face belongs to exactly $k$ facets.   Adding the assumption that the orbit space is acyclic and has acyclic faces 
 imposes strong topological restrictions, as we see in the following theorem of \cite{MP}. 
\begin{theorem}\label{MP}\cite{MP}
Let $M$ be a torus manifold. Then the odd degree integer cohomology of $M$ vanishes if and only if $M$ is locally standard and the orbit space $M/T$ is acyclic with acyclic faces.  
\end{theorem}

The quotient space of a $T^n$-manifold plays an important role in the theory. Recall that an $n$-dimensional  convex polytope is called {\it simple}  if the number of facets meeting at each vertex is $n$.  A {\em homology polytope}  is an $n$-manifold with corners that is acyclic with acyclic faces (see \cite{MP}). 
 A {\it nice} manifold with corners, or {\it a manifold with faces}, has every codimension $k$ face  contained in exactly $k$ facets.  Clearly, a simple convex polytope is a nice homology polytope.

The orbit space of a locally standard action of $T^n$ on $M^{2n}$ is an $n$-dimensional manifold with corners.  Quasitoric manifolds have the property that their orbit space is diffeomorphic, as a manifold with corners, to a simple polytope $P^n$.  Note that two simple polytopes are diffeomorphic as manifolds with corners if and only if they are combinatorially equivalent by work of  Davis \cite{D}, and Wiemeler \cite{Wie2}.   

\begin{definition} [{\bf Quasitoric manifold}]\label{qt} Given a combinatorial simple polytope $P^n$, a $T^n$ manifold $M^{2n}$ is called a {\em quasitoric manifold over $P^n$} if the following two conditions are satisfied:  
\begin{enumerate}
\item the $T^n$-action is locally standard; and  
\item there is a projection map $\pi: M^{2n} \longrightarrow P^n$ which is constant on $T^n$-orbits and which maps every $k$-dimensional orbit to a point in the interior of a codimension $k$ face of $P^n$ for $k=0,\dots,n$.
\end{enumerate}
\end{definition} 

This definition implies that the $T^n$-action on such a quasitoric manifold $M^{2n}$ is free over the interior of the orbit polytope $P^n$,  and the vertices of $P^n$ correspond to $T^n$-fixed points.  In particular,  
quasitoric manifolds are examples of torus manifolds.  Even though the conditions on the torus action in the case of a torus manifold are much weaker than the conditions in the case of  a quasitoric manifold, torus manifolds still admit a combinatorial treatment similar to quasitoric manifolds.  
For example, the orbit space of a torus manifold is a nice manifold with corners if the action is locally standard.   If, in addition, the orbit space is acyclic with acyclic faces, the constructions below for simple polytope orbit spaces can be generalized to this case, that is, to the case of orbit spaces that are nice homology polytopes.

Let $\pi: M^{2n} \longrightarrow P^n=M^{2n}/T^n$ be the orbit map of a quasitoric manifold and let $\mathcal{F} = \{F_1,...,F_m\}$ be the set of facets of $P^n$. Denote the preimages by $M_j =\pi^{-1}(F_j)$, $1\leq j \leq m$.
Points in the relative interior of a facet $F_j$ correspond to orbits with the same one-dimensional isotropy subgroup, which we denote by $T_{F_j}$ . Hence $M_j$ is a connected component of the fixed point set of the circle subgroup $T_{F_j} \subset T^n$. This implies that $M_j$ is a $T^n$-invariant submanifold of codimension $2$ in $M$, and $M_j$ is a torus manifold over $F_j$ with the action of the quotient torus $T^n/T_{F_j} \cong T^{n-1}$. Following the terminology of  Davis and Januszkiewisz \cite{DJ}, we refer to $M_j$ as the {\em characteristic submanifold} corresponding to the $j$th facet $F_j \subset P^n$.  The mapping
$\lambda:F_j  \rightarrow T_{F_j}, 1\leq j \leq m,$
is called the {\it characteristic function} of the torus manifold $M^{2n}$.
Now let $G$ be a codimension-$k$ face of $P^n$ and write it as an intersection of $k$ facets: $G=F_{j_1} \cap\cdots\cap F_{j_k}$. Assign to each face $G$ the subtorus $T_G = \prod_{F_i \supset G}  T_{F_i} \subset T^{\mathcal{F}}$.
Then $M_G =\pi^{-1}(G)$ is a $T^n$-invariant submanifold of codimension $2k$ in $M$, and $M_G$ is fixed under each circle subgroup $\lambda(F_{j_l} )$, $1\leq   l\leq   k$.

To each $n$-dimensional simple convex polytope, $P^n$, we may associate a $T^m$-manifold  $\mathcal{Z}_P$ with the orbit space $P^n$, as in  \cite{DJ}.

 \begin{definition}[{\bf Moment Angle Manifold}]\label{moment}
For every point $q \in P^n$, denote by $G(q)$ the unique (smallest)  face containing $q$ in its interior.
For  any simple polytope $P^n$ define the {\em moment angle manifold} $$\mathcal{Z}_P = (T^{\mathcal{F}} \times P^n)/\sim \,\, = \,\,(T^{m} \times P^n)/\sim \,,$$
where $(t_1,p) \sim (t_2, q)$ if and only if $p=q$ and $t_1\,t_2^{-1} \in T_{G(q)}$.  
\end{definition}
\begin{remark}\label{ZPlstd} The action of $T^m$ on $\mathcal{Z}_P$ is locally standard. The proof of this fact is analogous to the proof in Construction 5.12 in \cite{BP}.
\end{remark}
Note that the equivalence relation depends only on the combinatorics of $P^n$.  In fact, this is also true for the topological and smooth type of $\mathcal{Z}_P$, that is, combinatorially equivalent simple polytopes yield homeomorphic, and, in fact, diffeomorphic, moment angle manifolds (see Proposition 4.3 in Panov \cite{P} and the remark immediately following it).

The free action of $T^m$ on $T^{\mathcal{F}} \times P^n$ descends to an action on $\mathcal{Z}_P$, with quotient $P^n$.  
Let $\pi_{\mathcal{Z}}: \mathcal{Z}_P \longrightarrow P^n$ be the orbit map.  The action of $T^m$ on $\mathcal{Z}_P$ is free over the 
interior of $P^n$, where each vertex $v \in P^n$ represents the orbit $\pi_{\mathcal{Z}}^{-1}(v)$ with maximal isotropy subgroup of dimension $n$.  

In  \cite{BP} the following facts about the space $\mathcal{Z}_P$ are proven.

\begin{proposition}\label{p:BP} \cite{BP}  Let $P^n$ be a combinatorial simple polytope with $m$ facets, then 
\begin{enumerate}
\item  The space $\mathcal{Z}_P$ is a smooth manifold of dimension $m + n$. 
\item If $P = P_1 \times P_2$ for  
simple polytopes $P_1$ and $P_2$, then $\mathcal{Z}_P = \mathcal{Z}_{P_1} \times \mathcal{Z}_{P_2}$.  
          If $ G \subset P$ is a face, then $\mathcal{Z}_G$ is a submanifold of $\mathcal{Z}_P$.  
 \end{enumerate}
\end{proposition}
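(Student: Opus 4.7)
The plan is to construct explicit smooth charts on $\mathcal{Z}_P$ that exploit the simplicity of $P^n$, and then to derive both parts of (2) from the naturality of the construction. The key observation is that, since $T^{\mathcal{F}} = \prod_{F \in \mathcal{F}} S^1_F$ is indexed by facets, any subtorus $T_G$ for a codimension-$k$ face $G = F_{j_1} \cap \cdots \cap F_{j_k}$ is automatically the coordinate subtorus $S^1_{F_{j_1}} \times \cdots \times S^1_{F_{j_k}} \cong T^k$, with canonical complementary subtorus $T^{m-k}$ coming from the remaining circle factors.

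For (1), the easy case is that over $\mathrm{int}(P^n)$ the equivalence is trivial, so $\pi_{\mathcal{Z}}^{-1}(\mathrm{int}(P^n)) \cong T^m \times \mathrm{int}(P^n)$ is an open $(m+n)$-manifold. For a point $p$ in the relative interior of a codimension-$k$ face $G$, I would use simplicity of $P$ to choose an affine chart $U \cong \mathbb{R}^{n-k} \times \mathbb{R}^k_{\geq 0}$ around $p$ in which the $k$ facets through $p$ correspond to the $k$ coordinate hyperplanes of $\mathbb{R}^k_{\geq 0}$. On $T^m \times U$ the equivalence collapses, for each $q = (x; y_1, \ldots, y_k)$, exactly those circles $S^1_{F_{j_i}}$ with $y_i = 0$. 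Using the canonical splitting $T^m = T_G \times T^{m-k}$ together with the standard polar-coordinate identification $(T^k \times \mathbb{R}^k_{\geq 0})/\!\sim \, \cong \mathbb{C}^k$, the corresponding chart on $\mathcal{Z}_P$ is diffeomorphic to $T^{m-k} \times \mathbb{R}^{n-k} \times \mathbb{C}^k$. This has dimension $(m-k) + (n-k) + 2k = m+n$ as required, and transitions between such charts are smooth because they are induced by face-preserving coordinate changes on $P$ combined with torus automorphisms.

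For (2), the product claim follows from $\mathcal{F} = \mathcal{F}_1 \sqcup \mathcal{F}_2$, $T^{\mathcal{F}} = T^{\mathcal{F}_1} \times T^{\mathcal{F}_2}$, and $G(q_1, q_2) = G(q_1) \times G(q_2)$, which splits the equivalence as a product and produces a canonical bijection $\mathcal{Z}_P \to \mathcal{Z}_{P_1} \times \mathcal{Z}_{P_2}$ that is a diffeomorphism by comparing the local chart description from (1). For the face claim, a face $G \subset P$ is itself a simple polytope with facets $F_j \cap G$ for those $F_j \not\supset G$ that meet $G$; the inclusion $\mathcal{F}_G \hookrightarrow \mathcal{F}$ induces an inclusion of coordinate subtori $T^{\mathcal{F}_G} \hookrightarrow T^{\mathcal{F}}$, and together with $G \hookrightarrow P$ this gives a map $T^{\mathcal{F}_G} \times G \to T^{\mathcal{F}} \times P$ that intertwines the two equivalence relations, hence descends to a well-defined map $\mathcal{Z}_G \to \mathcal{Z}_P$ which, examined in the charts above, is a smooth embedding of the expected codimension.

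The main obstacle, and the step that really uses simplicity of $P$, is the chart construction near a face of codimension $k$: one needs the isotropy torus to have dimension exactly $k$ and the resulting collapsing to be locally modelled on $\mathbb{C}^k$ rather than on something singular. Simplicity of $P$ gives both of these inputs essentially for free, and the remaining verifications reduce to the one-variable model $(S^1 \times \mathbb{R}_{\geq 0})/(S^1 \times \{0\}) \cong \mathbb{C}$ and to bookkeeping with the canonical torus splittings.
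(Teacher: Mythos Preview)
The paper does not supply its own proof of this proposition; it is quoted as a known result from Buchstaber and Panov \cite{BP}, introduced with ``In Buchstaber and Panov \cite{BP} the following facts about the space $\mathcal{Z}_P$ are proven.'' Your argument is correct and is in fact the standard one found there (and in Davis--Januszkiewicz \cite{DJ}): the local chart near a codimension-$k$ face via the polar-coordinate identification $(T^k \times \mathbb{R}^k_{\geq 0})/\!\sim\;\cong \mathbb{C}^k$, together with the canonical complementary subtorus $T^{m-k}$, is exactly the mechanism used in the literature, and your treatment of the product and face claims via the splitting $\mathcal{F} = \mathcal{F}_1 \sqcup \mathcal{F}_2$ and the coordinate-subtorus inclusion $T^{\mathcal{F}_G} \hookrightarrow T^{\mathcal{F}}$ is likewise the standard route.
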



\subsection{Torus Orbifolds} 
In this subsection we gather some preliminary results about torus orbifolds.
We first recall the definition of an orbifold. For more details about orbifolds and actions of tori on orbifolds, see, for example, Haefliger and Salem \cite{HS1},  and \cite{GGKRW}.
\begin{definition}[{\bf Orbifold}] An {\em $n$-dimensional (smooth) orbifold}, denoted by $\mathcal{O}$, is a second-countable, Hausdorff topological space $|\mathcal{O}|$, called the underlying topological space of $\mathcal{O}$, together with an  equivalence class of $n$-dimensional orbifold atlases.
\end{definition}
In analogy with a torus manifold, we may define a torus orbifold, as follows. 

\begin{definition}[{\bf Torus Orbifold}] A {\em torus orbifold}, $\mathcal{O}$, is a $2n$-dimensional, closed, orientable orbifold with an effective smooth action of 
an $n$-dimensional torus $T$ such that $\mathcal{O}^T\neq \emptyset$.
\end{definition}

The following theorem from \cite{GGKRW}, proven using results obtained therein for  torus orbifolds,  is of use in the proof of Theorem \ref{t:thma}.
\begin{theorem}\cite{GGKRW}\label{REtorusorbifold} Let  $M$ be an $n$-dimensional, smooth, closed, simply connected, rationally elliptic manifold with an isotropy-maximal $T^k$-action. Then there is a product $\hat{P}$ of spheres of dimension $\geq 3$, a torus $\hat{T}$ acting linearly on $\hat{P}$, and an effective, linear action of $T^k$ on $\hat{M}=\hat{P}/\hat{T}$, such that there is a $T^k$-equivariant rational homotopy equivalence $M\simeq_{\qqq} \hat{M}$.
\end{theorem}


 \subsection{Alexandrov Geometry} Recall that a complete, locally compact, finite dimensional length space $(X,\mathrm{dist})$ with curvature bounded from below in the triangle comparison sense is an \emph{Alexandrov space} (see, for example, Burago, Burago, and Ivanov \cite{BBI}).
 When $M$ is a complete, connected Riemannian manifold and $G$ is a compact Lie group acting on $M$ by isometries, the orbit space $X=M/G$ is equipped with the orbital distance metric induced from $M$, that is, the distance between $\overline{p}$ and $\overline{q}$ in $X$ is the distance between the orbits $G(p)$ and $G(q)$ as subsets of $M$.  Additionally, if $M$ has sectional curvature bounded below, that is, $\sec M\geq k$, for some $k\in \mathbb{R}$, then the orbit space $X$ is an Alexandrov space with $\curv X \geq k$. 
 
The \emph{space of directions} of a general Alexandrov space at a point $x$ is
by definition the completion of the 
space of geodesic directions at $x$. In the case of orbit spaces $X=M/G$, the space of directions $\Sigma_{\overline{p}}X$ at a point $\overline{p}\in X$ consists of geodesic directions and is isometric to
$S^{\perp}_p/G_p$, where $S^{\perp}_p$ is the unit normal sphere to the orbit $G(p)$ at $p\in M$.


\subsection{Geometric results in the presence of a lower curvature bound}

Finally, we recall some general results about $G$-manifolds with non-negative and almost non-negative curvature which we use throughout.
As noted earlier, that a torus manifold is an example of an $S^1$-fixed point homogeneous manifold, indeed, of a nested $S^1$-fixed point homogeneous manifold. Closed, simply-connected, fixed point homogeneous manifolds of positive curvature were classified in Grove and Searle \cite{GS1}. More recently, the 
 following theorem by Spindeler,  \cite{Spi}, gives a characterization of non-negatively curved $G$-fixed point homogeneous manifolds.

\begin{theorem}\label{Spindeler} \cite{Spi} Assume that $G$ acts fixed point homogeneously  on a closed, non-negatively curved Riemannian manifold $M$. Let $F$ be a fixed point component of maximal dimension. Then there exists a smooth submanifold $N$ of $M$, without boundary, such that $M$ is diffeomorphic to the normal disk bundles $D(F)$ and $D(N)$ of $F$ and $N$ glued together along their common boundaries, that is, 
\bdm
M  = D(F) \cup_{\partial} D(N).
\edm
Further, $N$ is $G$-invariant and all points of $M\setminus \{F\cup N\}$ belong to principal $G$-orbits. 
\end{theorem}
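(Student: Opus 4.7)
The plan is to work on the quotient Alexandrov space $M^{\ast}=M/G$, use the concavity of the distance function to $F$ together with Perelman's gradient flow, and then lift the structure back to $M$ via the equivariant geometry.

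First I would establish the Alexandrov geometry of $M^{\ast}$. Since $G$ acts by isometries on a non-negatively curved manifold, $M^{\ast}$ is a non-negatively curved Alexandrov space, and the fixed point homogeneity hypothesis translates exactly to $\dim M^{\ast}=\dim F+1$. Because $F\subset M^{G}$ is totally geodesic in $M$, its image $F^{\ast}\subset M^{\ast}$ is a totally convex, codimension-one subset. Define $f\colon M^{\ast}\to [0,\infty)$ by $f(x)=\dist(x,F^{\ast})$; total convexity of $F^{\ast}$ combined with $\curv(M^{\ast})\geq 0$ gives, via Toponogov comparison, that $f$ is $1$-Lipschitz and concave on $M^{\ast}$. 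Set $c:=\max f$ and $N^{\ast}:=f^{-1}(c)$, which is a totally convex extremal subset of $M^{\ast}$, and let $N:=\pi^{-1}(N^{\ast})\subset M$, a closed $G$-invariant subset.

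Next I would use Perelman's gradient flow of the concave function $f$ on $M^{\ast}$ (Petrunin) to show that $M^{\ast}\setminus F^{\ast}$ deformation retracts to $N^{\ast}$ and that $M^{\ast}\setminus N^{\ast}$ deformation retracts to $F^{\ast}$. The crucial point is to lift this flow equivariantly to $M$. Because $F$ is totally geodesic, the function $\tilde f(x)=\dist(x,F)$ on $M$ is smooth on the open tube bounded by the focal locus and agrees with $f\circ\pi$; its gradient is horizontal, so its flow commutes with $G$ and lifts the flow on $M^{\ast}$. Using this lifted flow I would identify a maximal open tubular neighborhood of $F$ with the interior of the normal disk bundle $D(F)$ via $\exp^{\perp}_{F}$, and the complement with a neighborhood of $N$.

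The regularity of $N$ is the main obstacle: in general, extremal subsets of Alexandrov spaces can be singular, so producing a closed \emph{smooth} submanifold without boundary requires the manifold structure of $M$ and the $G$-equivariance. I would argue that along the lifted flow the first focal time of $\exp^{\perp}_{F}$ is constant (equal to $c$) on each $G$-orbit, and that the set of corresponding focal points is exactly $N$; an equivariant critical-point argument in the style of Cheeger--Gromoll/Sharafutdinov, applied to $\tilde f$ on $M$, then shows that $N$ is a closed smooth $G$-invariant submanifold containing every singular orbit not in $F$. Finally, gluing the normal disk bundles $D(F)$ and $D(N)$ along the common boundary $\{\tilde f=c/2\}$ (after a smoothing of the flow-induced identification) yields the decomposition $M\cong D(F)\cup_{\partial} D(N)$ claimed in the statement.
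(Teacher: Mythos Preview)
This theorem is not proved in the paper; it is quoted from Spindeler's thesis \cite{Spi} and used as a black box, so there is no proof here to compare your proposal against.

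That said, your outline follows the same strategy as Spindeler: pass to the non-negatively curved Alexandrov space $M^{\ast}=M/G$, exploit concavity of the distance to the image of $F$, take $N^{\ast}$ to be the maximum set, and lift.  One small correction: fixed-point homogeneity means $G$ acts transitively on each normal sphere to $F$, so a tubular neighborhood of $F$ maps onto $F\times[0,\epsilon)$ in $M^{\ast}$; hence $F^{\ast}$ is a boundary component of $M^{\ast}$, not merely a totally convex interior subset, and the concavity you want is the standard concavity of distance to the boundary in non-negative curvature.

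The genuine gap is the smoothness of $N$.  Constancy of the first focal time of $\exp^{\perp}_{F}$ along a single $G$-orbit is automatic by $G$-invariance and proves nothing; what you actually need is that the cut locus of $F$ in $M$ coincides with $\pi^{-1}(N^{\ast})$ and is a smooth closed submanifold.  A Cheeger--Gromoll/Sharafutdinov style critical-point argument for $\tilde f=\dist(\cdot,F)$ does not give this for free: in the classical soul construction the set at maximal distance from the boundary is only convex, not a priori a smooth submanifold.  Spindeler's proof of this point is the substantive contribution of his thesis; it involves a careful local analysis at points of $N$ using the slice theorem, the convexity of $N^{\ast}$, and the structure of spaces of directions in $M^{\ast}$.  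Your sketch correctly locates the difficulty but does not supply an argument that would close it.
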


 The following two facts from \cite{Spi}, for the case where $M$ is a torus manifold of non-negative curvature, are important for what follows.

\begin{proposition} \label{Spindelerprop} \cite{Spi} Let $M, N$ and $F$ be as in Theorem \ref{Spindeler} and assume that $M$ is a closed, simply connected 
torus manifold of non-negative curvature.   Then
            $N$ has codimension greater than or equal to $2$ and $F$ is simply connected.
  \end{proposition}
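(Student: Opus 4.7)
The fixed-point homogeneity of the $S^{1}$-action, together with maximality of $F$ among $S^{1}$-fixed components, forces $\codim F = 2$ in $M$. Consequently, the normal bundle of $F$ is a rank-2 real (equivalently, complex line) bundle on which $S^{1}$ acts by rotation in each fibre, so $\partial D(F) \to F$ is a principal $S^{1}$-bundle, hence connected, with a free $S^{1}$-action. These two features drive both parts of the proof.

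\textbf{Part (1): $\codim N \ge 2$.} Assume for contradiction that $\codim N = 1$. Then $\partial D(N) \to N$ is a 0-sphere bundle, either the trivial one (with total space $N \sqcup N$, disconnected) or the orientation double cover $\widetilde{N}$ (connected). The identification $\partial D(F) = \partial D(N)$ together with connectedness of $\partial D(F)$ rules out the trivial case, so $N$ is non-orientable and $\partial D(N) = \widetilde N$. Since the compact connected group $S^{1}$ acts on a 1-dimensional fibre only trivially, each isotropy group $S^{1}_{p}$ fixes its fibre pointwise; combined with freeness of $S^{1}$ on $\partial D(N)$, this forces $S^{1}_{p} = \{e\}$ for every $p \in N$, so $S^{1}$ acts freely on $N$. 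Writing $B = N/S^{1}$, we then obtain $F = \partial D(F)/S^{1} = \widetilde{N}/S^{1}$, exhibited as a connected double cover of $B$. Since $M$ is simply connected and $S^{1}$ has fixed points, $M^{*} := M/S^{1}$ is simply connected as well, and a standard deformation retraction of $M^{*} = (F \times I) \cup_{F}(D(N)/S^{1})$ identifies $M^{*}$ with the mapping cylinder of the double cover $F \to B$, yielding $\pi_{1}(B) = \pi_{1}(M^{*}) = 1$. But a connected double cover of a simply connected space cannot exist, a contradiction.

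\textbf{Part (2): $F$ simply connected.} Set $k := \codim N \ge 2$; then $\partial D(N) \to N$ is a sphere bundle with connected fibre $S^{k-1}$. Van Kampen applied to $M = D(F) \cup D(N)$ gives
\[
1 = \pi_{1}(M) = \pi_{1}(F) *_{\pi_{1}(\partial D(F))} \pi_{1}(N),
\]
with both amalgamating maps surjective. When $k \ge 3$, the fibre $S^{k-1}$ is simply connected, so the long exact sequence of $S^{k-1} \to \partial D(N) \to N$ gives an isomorphism $\pi_{1}(\partial D(N)) \cong \pi_{1}(N)$; the amalgamated product then collapses to $\pi_{1}(F)$, forcing $\pi_{1}(F) = 1$. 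When $k = 2$, both amalgamation maps are surjective with kernels normally generated by their respective $S^{1}$-fibre classes, and one identifies these two fibre classes in $\pi_{1}(\partial D(F))$ by using that the free $S^{1}$-action on $\partial D(F) = \partial D(N)$ inherited from $M$ realises both $\partial D(F) \to F$ and $\partial D(N) \to N$ as the same principal $S^{1}$-bundle structure up to homotopy, after which the collapsing argument proceeds exactly as in the $k \ge 3$ case.

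\textbf{Main obstacle.} The principal technical difficulty is the $k = 2$ subcase of Part (2), where two a priori distinct $S^{1}$-bundle structures on $\partial D(F) = \partial D(N)$ must be reconciled: the $S^{1}$-orbit fibration over $F$ and the normal-sphere fibration over $N$. Identifying their fibre classes in $\pi_{1}(\partial D(F))$ requires a careful analysis of how the global $S^{1}$-action on $M$ interacts with the tubular neighbourhood of $N$ arising from Spindeler's soul-theoretic decomposition (Theorem \ref{Spindeler}); once this identification is in hand, the van Kampen collapse yields $\pi_{1}(F) = 1$ uniformly.
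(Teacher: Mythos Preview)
The paper does not prove this proposition itself; it is quoted from Spindeler's thesis \cite{Spi}, so I can only assess your argument on its own merits. Your Part (1) and your Part (2) for $k\ge 3$ are correct. Part (1) can be shortened: simple connectivity of $M$ already forces every closed hypersurface to be two-sided (a one-sided one would yield a loop meeting $N$ with odd mod-$2$ intersection number, hence a nonzero class in $H_1(M;\zzz/2)=0$), so $\partial D(N)$ is disconnected at once and your treatment of the one-sided branch is superfluous, though not wrong. For $k\ge 3$ your van Kampen argument is clean: the pushout along the isomorphism $\pi_1(E)\to\pi_1(N)$ collapses to $\pi_1(F)$.

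The genuine gap is the $k=2$ subcase of Part (2). Your proposed resolution---that the free $S^1$-action on $E=\partial D(F)=\partial D(N)$ realises \emph{both} projections as the same principal $S^1$-bundle up to homotopy---is not correct as stated. The map $E\to F$ is indeed the $S^1$-orbit map, so its fibre class $\alpha\in\pi_1(E)$ is an $S^1$-orbit. But $E\to N$ is the normal-sphere projection; unless $S^1$ acts trivially on $N$, the $S^1$-orbits in $E$ do not coincide with these normal circles, since an orbit through a point over $p\in N$ projects onto the $S^1$-orbit of $p$ in $N$, not to $p$ itself. Even at a point $p\in N^{S^1}$ the normal circle $\beta$ equals an $S^1$-orbit only when the isotropy weight on $\nu_p$ is $\pm 1$; for weight $m$ one obtains $\alpha=m\beta$ locally, and for $|m|>1$ there is no reason for $\beta$ to lie in the normal closure of $\alpha$, so the amalgamated product need not collapse to $\pi_1(F)$. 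Thus the ``careful analysis'' you defer is not a routine verification: it must invoke the torus-manifold hypothesis in an essential way (for instance via the full $T^n$-action, the location of $T^n$-fixed points relative to $F$ and $N$, and an inductive step), and this is precisely where Spindeler's argument does work that your outline does not supply.
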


For a non-negatively curved torus manifold, Proposition 4.5 from  Wiemeler \cite{Wie} shows that the quotient space, $M^{2n}/T^n=P^n$, is described as follows:
 $P^n$ is a nice homology polytope  and  $P^n$ is of the form
\begin{equation}
\label{2.2}
P^n=\prod_{i<r} \Sigma^{n_i} \times \prod_{i\geq r} \Delta^{n_i},
\end{equation}
where $\Sigma^{n_i} =S^{2n_i}/T^n_i$ and $\Delta^{n_i}=S^{2n_i+1}/T^{n_i+1}$  is an $n_i$-simplex. The $T^{n_i}$-action on $S^{2n_i}$ is the suspension of the standard $T^{n_i}$-action on $\rrr^{2n_i}$, and it is easy to see that 
$\Sigma^{n_i}$ is obtained as the suspension of $\Delta^{n_i-1}$, ignoring the simplicial structure of $\Delta^{n_i-1}$. In what follows, we will refer to $\Sigma^{n_i}$ as a {\em lunar suspension of $\Delta^{n_i-1}$}.
Note that each 
$\Delta^{n_i}$ has $n_i+1$ facets and each $\Sigma^{n_i}$ 
 has $n_i$ facets, and so, the number of facets of $P^n$ 
 is bounded between $n$ 
and $2n$. 

Using this description of the quotient space the following equivariant classification theorem is obtained in \cite{Wie}.

\begin{theorem}\label{t:torus}   \cite{Wie} Let $M$ be a simply connected, non-negatively curved torus manifold. Then $M$ is equivariantly diffeomorphic to 
a quotient of  
\begin{equation}\label{2.3}
\mathcal{Z}_P=\prod_{i<r} S^{2n_i} \times \prod_{i\geq r} S^{2n_i-1},\, \, n_i\geq 2,
\end{equation} by a free linear torus action, where $\mathcal{Z}_P$ is the moment angle manifold corresponding to the polytope in Display \eqref{2.2}.
\end{theorem}

In the proof of Theorem \ref{t:torus}, the following lemma was important. It is also useful for the proof of Theorem \ref{t:thma}.
 
\begin{lemma}\label{locallystandard}  \cite{Wie} Let $M^{2n}$ be a simply connected torus manifold with an invariant metric of non-negative curvature. Then $M^{2n}$ is locally standard and $M^{2n}/T^n$ and all its faces are diffeomorphic (after smoothing the corners) to standard discs $D^k$. Moreover, $H^{\textrm{odd}}(M;Z) = 0$.
\end{lemma}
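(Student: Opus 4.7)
The plan is to induct on $n$, with base case $n=1$: any simply-connected, non-negatively curved $2$-dimensional torus manifold is $S^2$ with the standard rotation, which is locally standard with quotient $D^1$ and has trivial odd cohomology. For the inductive step I will pick a characteristic submanifold $M_1 \subset M^{2n}$ of maximal dimension, namely the fixed set of a circle subgroup $S^1 \subset T^n$, so that $S^1$ acts fixed point homogeneously on $M$. Theorem \ref{Spindeler} then furnishes a smooth decomposition
\begin{displaymath}
M = D(M_1) \cup_{\partial} D(N),
\end{displaymath}
where, by Proposition \ref{Spindelerprop}, $N$ has codimension at least $2$ and $M_1$ is simply-connected. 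Because $M_1$ is totally geodesic it inherits a non-negatively curved metric, and $T^{n-1} = T^n/S^1$ makes it a simply-connected, non-negatively curved torus manifold of dimension $2n-2$, so the inductive hypothesis applies.

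Next, I would analyze $N$: each component is $T$-invariant and totally geodesic, and after quotienting by its (finite) ineffective kernel becomes a simply-connected, non-negatively curved torus manifold of strictly lower dimension, to which the inductive hypothesis also applies. Passing to quotients in the Spindeler decomposition yields $M/T = D(M_1)/T \cup_{\partial} D(N)/T$; by induction both pieces, and all of their faces, are discs after smoothing corners, and the matching of isotropy weights across the common boundary forces the total quotient to be a nice manifold with corners all of whose faces are acyclic, hence discs. Local standardness then follows: at a $T$-fixed point the slice is a faithful real $T^n$-representation of dimension $2n$, forcing the standard model; at other points the slice representation is controlled inductively by the standard weights of the characteristic submanifolds containing the orbit. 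For $H^{\mathrm{odd}}(M;\mathbb{Z}) = 0$, I would apply Mayer--Vietoris to the Spindeler decomposition, using the inductive vanishing on $M_1$ and on each component of $N$, combined with the vanishing on their common sphere-bundle boundary over a disc.

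The main obstacle I anticipate is the analysis of $N$ in the second paragraph. Its components may have differing dimensions and need not admit an effective $T^n$-action, so one must identify the ineffective kernel on each component precisely, verify that each effective quotient remains a simply-connected, non-negatively curved torus manifold (so the inductive hypothesis is genuinely applicable), and then reconcile the combinatorial weight data across the common boundary so that the gluing produces a manifold with corners with acyclic faces rather than a pathological polytopal object. The non-negative curvature hypothesis is used critically throughout, both via Theorem \ref{Spindeler} and Proposition \ref{Spindelerprop} and to control the global configurations of singular strata that enter the combinatorics.
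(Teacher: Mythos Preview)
Your overall strategy coincides with Wiemeler's original argument, which this paper does not reprove but outlines in the proof of the generalization Theorem~\ref{t:locallystandard}: induct on $n$, apply Spindeler's decomposition $M = D(F) \cup_E D(N)$ with $F$ a characteristic submanifold, use Proposition~\ref{Spindelerprop} to obtain that $F$ is simply-connected and $\mathrm{codim}(N) \geq 2$, and invoke the inductive hypothesis on the totally geodesic $F$.

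The genuine gap is in your treatment of $N$. First, a slip: the ineffective kernel of the $T^n$-action on $N$ is not finite --- if it were, $T^n$ would act almost effectively on a manifold of dimension strictly less than $2n$, which cannot then be a torus manifold for a torus of rank $n$. What you actually need is that $N$ is a fixed-point component of a subtorus $T'$ with $2\dim T' = \mathrm{codim}\,N$, so that $T/T'$ turns $N$ into a lower-dimensional torus manifold. This, however, is not automatic, and it is precisely here that Wiemeler's argument (as sketched in the paper) bifurcates according to whether $\dim \pi_N(\pi_F^{-1}(x))$ equals $0$ or $1$ for $x \in M^T \cap F$. In the first case $N$ is indeed such a fixed-point component, hence totally geodesic and simply-connected, and induction applies directly. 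In the second case $N$ is \emph{not} handled by applying the inductive hypothesis to $N$ itself; instead one shows that $N/\lambda(F)$ is locally standard by using the already-established local standardness of $F$, where $\lambda(F)$ is the circle fixing $F$. Without this dichotomy you can neither conclude that $N$ is totally geodesic (so non-negative curvature need not pass to $N$) nor that $N$ is a torus manifold, and your proposed gluing of quotient discs does not go through. Finally, the vanishing of $H^{\mathrm{odd}}$ is obtained from local standardness together with acyclicity of all faces via the Masuda--Panov criterion recalled in the paper, rather than from a direct Mayer--Vietoris argument.
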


Using Proposition 4.5 of \cite{Wie} and Theorem 4.2 \cite{D}, the 
 following proposition  allows  us to identify the quotient space of a torus manifold, $M/T$.

\begin{proposition}\label{Q} Let $M^{2n}$ be a simply connected torus manifold with an invariant metric of non-negative curvature. Then $M/T=P$ is diffeomorphic to  a product of simplices and lunar suspensions   as in Display \eqref{2.2}.
\end{proposition}

We  also make use of the following theorem from \cite{GGKRW}. 
\begin{theorem}\cite{GGKRW}\label{misre} Let $M$ be a closed, simply connected, non-negatively curved Riemannian manifold admitting an effective, isometric, isotropy-maximal torus action. Then $M$ is rationally elliptic.
\end{theorem}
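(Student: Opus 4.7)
The plan is to reduce the question to a statement about non-negatively curved torus orbifolds, and then to invoke (or in part establish) an orbifold analog of Wiemeler's classification.

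First, since the $T^k$-action on $M^n$ is maximal, the free rank of the action equals $2k-n$: the lower bound is Proposition \ref{p:free rank}, and the upper bound is maximality itself. Choose a subtorus $H\cong T^{2k-n}\subset T^k$ acting almost freely and isometrically on $M$. The quotient $\mathcal{O}:=M/H$ is then a closed orbifold of dimension $2(n-k)$ which inherits a non-negatively curved orbifold Riemannian metric from $M$ via the O'Neill submersion formula (in the almost-free setting). By Lemma \ref{props}, there exists a point $x\in M$ with connected isotropy $T^k_x\cong T^{n-k}$, so the residual torus $T^{n-k}=T^k/H$ acts effectively on $\mathcal{O}$ with the image of the minimal orbit through $x$ as a fixed point. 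Hence $\mathcal{O}$ is a non-negatively curved, closed, orientable torus orbifold of dimension $2(n-k)$.

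The central step is to establish an orbifold version of Wiemeler's classification (Lemma \ref{locallystandard} and Theorem \ref{t:torus}): $\mathcal{O}$ is locally standard, $\mathcal{O}/T^{n-k}$ is a nice manifold with corners all of whose faces are acyclic, and is combinatorially equivalent to a product of simplices and suspensions of simplices as in Display (2.2). This identifies $\mathcal{O}$ equivariantly with an almost free linear torus quotient of the associated moment-angle manifold $\mathcal{Z}_P$, which by Proposition \ref{p:BP} and the explicit form of the polytope is a product of spheres of dimension at least three. Since products of spheres are rationally elliptic and almost free torus quotients preserve rational ellipticity, $\mathcal{O}$ is rationally elliptic; indeed, it realizes the model of Theorem \ref{REtorusorbifold}.

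Finally, the orbibundle $H\to M\to \mathcal{O}$ yields a long exact sequence in rational homotopy groups. Because the fiber satisfies $\pi_i(T^{2k-n})\otimes\qqq=0$ for $i\geq 2$ and $\dim_\qqq(\pi_1(T^{2k-n})\otimes\qqq)=2k-n$, and because the base $\mathcal{O}$ is rationally elliptic, one obtains $\dim_\qqq(\pi_*(M)\otimes\qqq)<\infty$; finiteness of $\dim_\qqq H^*(M;\qqq)$ is automatic from compactness. Thus $M$ is rationally elliptic. The principal obstacle is the orbifold generalization of Wiemeler's classification: Spindeler's non-negative curvature fixed-point homogeneous decomposition (Theorem \ref{Spindeler}) and the acyclicity arguments for faces of the quotient polytope must both be transported to a setting where orbifold isotropy groups are finite but nontrivial, and the face combinatorics of $\mathcal{O}/T^{n-k}$ must be forced to match Display (2.2). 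Once this structural step is in place, the remaining arguments are routine applications of moment-angle manifold theory and rational homotopy theory.
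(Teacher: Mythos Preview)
The paper does not give its own proof of this statement; it is quoted as a black box from \cite{GGKRW}. Your proposal is a reasonable outline in the spirit of that reference, but it contains a gap you yourself flag: you name ``the orbifold generalization of Wiemeler's classification'' as ``the principal obstacle'' and then do not carry it out. That step---showing that the non-negatively curved torus orbifold $\mathcal{O}=M/T^{2k-n}$ is locally standard with quotient combinatorially of the form in Display~(2.2)---is essentially the entire content of the theorem. Everything else in your argument (the free-rank computation, the final lifting of rational ellipticity along the almost-free $T^{2k-n}$ orbibundle) is routine, so what remains after your reduction is not easier than what you started with.

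There is also an unnecessary detour. The argument in \cite{GGKRW}, as this paper uses it (see the paragraph preceding Lemma~\ref{quotient} and the proof of Case~(2) of Theorem~A), works directly with the full quotient $P^{n-k}=M^n/T^k$: one analyzes the face structure of $P^{n-k}$ inductively via Spindeler's disk-bundle decomposition (Theorem~\ref{Spindeler}) applied to characteristic submanifolds of $M$ itself, and shows the faces are acyclic and the combinatorics match Display~(2.2). Since $M/T^k=\mathcal{O}/T^{n-k}$, the polytope is the same either way, so interposing the orbifold $\mathcal{O}$ buys nothing and forces you to redevelop the fixed-point-homogeneous and locally-standard machinery in the orbifold category. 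If you instead run the face-structure induction on $M$ directly, the ``principal obstacle'' disappears: you are back in the manifold setting where Theorem~\ref{Spindeler}, Proposition~\ref{Spindelerprop}, and Lemma~\ref{locallystandard} already apply.
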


The following corollary of  Theorem \ref{misre} follows for an almost isotropy-maximal torus action via a simple adaptation of the proof in \cite{GGKRW} (cf. the proof of Theorem \ref{t:amism}).

\begin{corollary} Let $M$ be a closed, simply connected, non-negatively curved Riemannian manifold admitting an effective, isometric, almost isotropy-maximal torus action. Then $M$ is rationally elliptic.
\end{corollary}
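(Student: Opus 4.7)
The plan is to combine the two theorems cited immediately before the statement. By hypothesis, $M$ admits an effective, isometric, almost maximal torus action, and $M$ is closed, simply-connected and non-negatively curved. First I would invoke Theorem \ref{t:amism}, which asserts that under precisely these hypotheses an almost maximal torus action is in fact maximal. This upgrades the given action to a maximal action on the same manifold $M$, without altering the underlying geometry or the rank of the acting torus.

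With the action now known to be maximal, I would then apply Theorem \ref{misre} directly: a closed, simply-connected, non-negatively curved Riemannian manifold admitting an effective, isometric maximal torus action is rationally elliptic. This yields the conclusion for $M$.

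There is essentially no obstacle here, since the corollary is a one-line consequence of chaining Theorem \ref{t:amism} into Theorem \ref{misre}; the real content lies in the proofs of those two theorems, both of which are established (or imported) elsewhere in the paper. The only minor point to check is that the hypotheses of Theorem \ref{t:amism} match those of the corollary verbatim (closed, simply-connected, non-negatively curved, effective, isometric, almost maximal), which they do, so no additional argument is needed to pass from the almost maximal setting to the maximal one before applying Theorem \ref{misre}.
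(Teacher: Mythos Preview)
Your proposal is correct and matches the paper's approach exactly: the paper explicitly states that the corollary follows by combining Theorem~\ref{t:amism} with Theorem~\ref{misre}, which is precisely the chaining argument you describe.
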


In the proof of Theorem \ref{t:thma}, we will need to consider  generalizations of Theorem \ref{Spindeler}, Proposition \ref{Spindelerprop} and Lemma \ref{locallystandard} to manifolds of almost non-negative curvature.   We 
 recall the definition of almost non-negative curvature here, as well as an important result of Fukaya and Yamaguchi that allows us to determine under what conditions the total space of a principal torus bundle will admit a metric of almost non-negative curvature. 
\begin{definition}[{\bf Almost Non-Negative Curvature}]
A sequence of Riemannian manifolds $\left\{ \left( M,\mathrm{g}_{\alpha
}\right) \right\} _{\alpha =1}^{\infty }$ is almost non-negatively curved if
there is a real number $D>0$ so that \begin{eqnarray*}
\mathrm{Diam}\left( M,\mathrm{g}_{\alpha }\right) &\leq &D, \\
\mathrm{sec}\left( M,\mathrm{g}_{\alpha }\right) &\geq &-\frac{1}{\alpha }.
\end{eqnarray*}
\end{definition}

A large number of examples of almost non-negatively curved manifolds can be constructed by the following result of Fukaya and Yamaguchi \cite{FY}.
\begin{theorem}\label{FukayaYamaguchi}  \cite{FY}  Let $F \hookrightarrow E \longrightarrow B$ be a smooth fiber bundle with compact Lie structure group $G$,  such that $B$ admits a family of metrics with almost non-negative 
curvature and the fiber $F$ admits a $G$-invariant metric of non-negative curvature.  Then the total space $E$ admits a family of metrics with almost 
non-negative curvature. 
\end{theorem}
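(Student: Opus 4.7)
The plan is to realize $E$ as an associated bundle $E = P \times_G F$, where $\pi \colon P \to B$ is the principal $G$-bundle whose transition functions define the fiber bundle structure of $E$, to fix a connection $\omega$ on $P$ and a bi-invariant background metric $b$ on $G$, and to construct the desired family of almost non-negatively curved metrics on $E$ by two applications of O'Neill's Riemannian submersion formula. The two key auxiliary metrics are: for each $\alpha$ and each $\epsilon > 0$, the connection metric $g_P^{\alpha,\epsilon}$ on $P$ that makes $\pi \colon (P, g_P^{\alpha,\epsilon}) \to (B, g_\alpha)$ a Riemannian submersion with horizontal distribution $\ker \omega$ and totally geodesic fibers isometric to $(G, \epsilon^2 b)$, and the product metric $g_P^{\alpha,\epsilon} \oplus g_F$ on $P \times F$.

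First I would show that $P$ admits a family of almost non-negatively curved metrics. O'Neill's formula applied to $\pi$ gives $\sec_P(X,Y) = \sec_B(\pi_* X, \pi_* Y) - 3 |A_{X,Y}|^2 / |X \wedge Y|^2$ on horizontal planes; $\sec_P \geq 0$ on vertical planes, since bi-invariant metrics on compact Lie groups are non-negatively curved; and $\sec_P \geq 0$ on mixed planes, since the fibers are totally geodesic. The O'Neill tensor $A_{X,Y}$ is a fixed vertical vector determined by the curvature $2$-form $\Omega$ of $\omega$, and rescaling the vertical direction by $\epsilon$ gives $|A|_{g_P^{\alpha,\epsilon}}^2 = \epsilon^2 |A|_{g_P^{\alpha,1}}^2$. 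Therefore, for each fixed $\alpha$, choosing $\epsilon = \epsilon_\alpha > 0$ small enough that $3 \epsilon_\alpha^2 \, \sup_P |A|_{g_P^{\alpha,1}}^2 \leq 1/\alpha$ yields $\sec(g_P^{\alpha,\epsilon_\alpha}) \geq -2/\alpha$, with diameter bounded by $D + \epsilon_\alpha \diam(G,b)$.

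Second I would transfer this to $E$. The product metric $g_P^{\alpha,\epsilon_\alpha} \oplus g_F$ on $P \times F$ satisfies $\sec \geq -2/\alpha$, since mixed planes in a Riemannian product have zero sectional curvature and each factor is bounded below by $-2/\alpha$, and its diameter is uniformly bounded. The diagonal action $g \cdot (p,f) = (p g^{-1}, g f)$ of $G$ on $P \times F$ is free, because $G$ acts freely on $P$, and isometric, because right translation preserves $g_P^{\alpha,\epsilon}$ and $g_F$ is $G$-invariant by hypothesis. Its quotient is identified with $E = P \times_G F$, yielding a Riemannian submersion $(P \times F, g_P^{\alpha,\epsilon_\alpha} \oplus g_F) \to (E, g_E^\alpha)$. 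By O'Neill's formula in the downward direction, every $2$-plane $\sigma \subset TE$ has sectional curvature at least that of its horizontal lift in $P \times F$, so $\sec(g_E^\alpha) \geq -2/\alpha$; combined with the uniform diameter bound, this gives the required family of almost non-negatively curved metrics on $E$.

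The main technical point is controlling the O'Neill tensor of the connection metric on $P$. The pointwise norm $|A|_{g_P^{\alpha,1}}^2$ depends on how the fixed curvature $2$-form $\Omega$ of $\omega$ is measured in the base metric $g_\alpha$, and if the family $\{g_\alpha\}$ is collapsing then this norm can in principle grow with $\alpha$. However, the theorem asks only for the existence of a family of almost non-negatively curved metrics on $E$, not a uniformly controlled construction in $\epsilon$, so it suffices to choose $\epsilon_\alpha$ small enough individually for each $\alpha$ to absorb any growth of $|A|^2$. A secondary, classical point is the identification of the given fiber bundle $E \to B$ with structure group $G$ as the associated bundle $P \times_G F$ for an appropriate principal $G$-bundle $P$.
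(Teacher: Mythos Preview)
The paper does not prove this theorem; it is quoted from Fukaya--Yamaguchi \cite{FY} as a black box and only used later (in Case~(2b) of the proof of Theorem~A) to put an almost non-negatively curved metric on a principal torus bundle. So there is no ``paper's own proof'' to compare against, and your task is really to give a self-contained argument.

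Your strategy --- realizing $E=P\times_G F$, putting a connection metric with shrunk bi-invariant fibers on $P$, taking the product with $(F,g_F)$, and passing to the quotient via O'Neill --- is the standard route and is correct in outline. Steps~1,~3, and~4 are fine: once you know $\sec(g_P^{\alpha,\epsilon_\alpha})\geq -2/\alpha$, the product bound and the Riemannian-submersion inequality give the conclusion exactly as you say. Your remark that $\epsilon_\alpha$ may be chosen separately for each $\alpha$, absorbing any growth of $|A|$ coming from the base family, is also correct.

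The gap is in Step~2. You verify the lower curvature bound only on three special types of $2$-planes: purely horizontal, purely vertical, and ``mixed'' in the sense of being spanned by one horizontal and one vertical vector. For the last type the formula $\sec_P(X,V)=|A_XV|^2\geq 0$ (using $T=0$) is indeed valid. But a generic $2$-plane is spanned by vectors of the form $X+U$ and $Y+V$ with $X,Y$ horizontal and $U,V$ vertical, and its sectional curvature is \emph{not} a convex combination of the three special cases. The full O'Neill formulas with $T=0$ produce additional cross terms, notably $\langle R(X,Y)Z,V\rangle$, which involve the covariant derivative $\nabla A$ of the O'Neill tensor, not just $|A|$. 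To close the argument you must check that under the canonical variation these cross terms also scale like $\epsilon^2$ (they do, essentially because every pairing with a vertical vector picks up a factor $\epsilon^2$), so that the lower bound for an arbitrary plane is still $\sec_B - C\epsilon^2(|A|^2+|\nabla A|^2)$ for a constant $C$ depending only on the connection and the base metric. This is routine but not automatic; as written, your sentence ``$\sec_P\geq 0$ on mixed planes, since the fibers are totally geodesic'' does not cover it.
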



\section{The Equivariant Cross-Sectioning Theorem and the Equivariant Classification}\label{s3}

In this section we develop three tools which will help us to prove Theorem \ref{t:thma}. They are  the Cross-Sectioning Theorem \ref{t:ecst}, the
Equivariant Classification Theorem \ref{t:ect} and  Theorem  \ref{c:wpd}, which shows that a closed, simply-connected manifold, $M$, with a locally standard $T^k$-action such that $M/T=P$, as in Display \eqref{2.2}, has $k$ facets is equivariantly diffeomorphic to $\mathcal{Z}_P$, as in Display \eqref{2.3}.


  \subsection{Cross-Sectioning Theorem}

Before we state the Cross-Sectioning Theorem, we define a conical decomposition of a quotient space. 

 \begin{definition}[{\bf Conical Decomposition}] We say that the orbit space, $M^*=M^n/T^k$,  of $M^n$ by an effective $T^k$ action, admits a {\em conical decomposition} if 
we may decompose 
the orbit space $M^*$ into a collection of conical sections $\{C_i^*\}_{i=1}^{m}$, with $C_i^*\cong D^{n-k}$ for each $i\in \{1, \hdots, m\}$. 
Moreover, the $C^*_i$ satisfy the following property, namely, 
$$(C^*_1 \cup \cdots \cup C^*_j)\cap C^*_{j+1}=A_j,$$
where $A_j$ is an $(n-k-1)$-cell for each $j$, where $1\leq j\leq m$.
 \end{definition}

\begin{thmcst}\label{t:ecst} Let $T^k$ be a smooth, locally standard action  on a smooth, closed, $n$-dimensional manifold, $M^n$ such that $M^*=M^n/T^k$ is homeomorphic to an $(n-k)$-dimensional disk, $D^{n-k}$, and $M^*$ admits a conical decomposition. Then there exists a continuous cross-section to the orbit map $\pi: M^n\rightarrow M^*$ that is smooth on a closed subset of $M^*\setminus \partial M^*$.
\end{thmcst}

The proof generalizes elements of the equivariant  classification theorem for $T^3$-actions on $M^6$ in \cite{Mc}.  Similar techniques are also used in 
 work of Raymond \cite{R} and Orlik and Raymond \cite{OR1, OR2, OR3}.  
 The main topological tool in the proof comes from obstruction theory, that is, the obstruction to extending a map $A \longrightarrow X$ to a map $W \longrightarrow X$, where 
 $X$ is a connected CW-complex and $(W,A)$ is a CW-pair.  Such an extension always exists, that is, the obstruction vanishes, if $H^{n+1}(W,A, \pi_n(X)) = 0$ for all $n$.  For 
 more details on obstruction theory see, for example, Davis and Kirk \cite{DK}.

 In the following two lemmas, required for the proof of the Cross-Sectioning Theorem \ref{t:ecst}, we will be considering a closed $T^k=T_1\times \cdots \times T_k$-invariant subset $C$ of the closed manifold, $M$, with $M$ as in Theorem \ref{t:ecst}. We choose $C$ so that its orbit space under the torus action, $C^*\subset M^*=D^{n-k}$  is a closed conical section  of $M^*$, with conical orbit space, as in Definition \ref{defncone}. That is, 
 $C^*\cong D^{n-k}=K(D^{n-k-1}),$
 is a closed cone over $D^{n-k-1}$. 
 Moreover,  we choose $C$ so that its intersection with the boundary, $\partial M^*$, is homeomorphic to $D^{n-k-1}$. In Figure \ref{F:1}, we illustrate this orbit space $C^*$  and its homeomorphic image $D^{n-k}$. 
We decompose $\partial C^*$ into its upper and lower hemispheres, $S^+$ and $S^{-}$, respectively, where $S^+= \partial C^*\cap \partial M^*$, $S^+\cap S^{-}=S^{n-k-2}$, and $p^*\in S^+$ denotes the vertex, as in Definition \ref{defncone}.

  \begin{figure}[!h]
\centering
\psfrag{C^*}{(1)}
\psfrag{$D^{n-k}$}{(2)}
\includegraphics[scale=0.8]{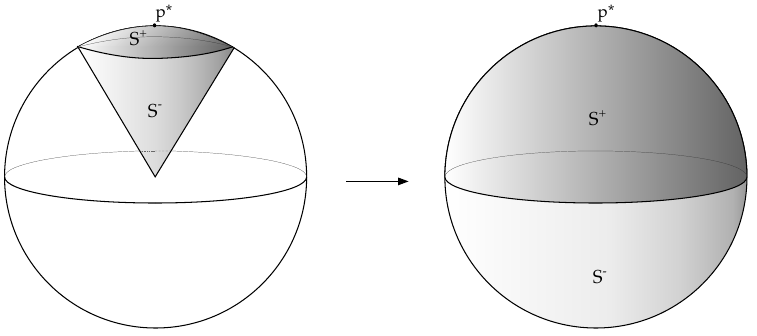}
\caption{The conical section $C^*\subset M^*$ and its homeomorphic image $D^{n-k}$.}
\label{F:1}
\end{figure}

Let $i$, $1\leq i\leq \min(n-k, k)$, denote the rank of the isotropy subgroup corresponding to the point $p^*$.  Since isotropies are constant along rays from $p^*$,   we can partition $(S^+, p^*)$ into $i$ cells of  dimension $(n-k-1)$, denoted by  $U_{k-i+1}, \hdots, U_k$, provided they all intersect in $p^*$, that is, $p^*\in \bigcap_{j=k-i+1}^{k} U_j$.  Note that to each $U_l$ we associate the corresponding circle isotropy subgroup $T_l$, where $k-i+1\leq l\leq k$.
By assumption,  the $T_l$ generate the $i$-dimensional torus $T^i$, $1\leq i\leq \min(n-k, k)$, and each pair of distinct circles has trivial intersection, that is, $T_{{k-i+1}}\times\cdots\times T_{k}=T^i$.

This gives us a weighted decomposition of $(S^+, p^*)$, which we denote  by 
$$ \{(U_{k-i+1}, T_{{k-i+1}}), \hdots, (U_k,T_{k})\}.$$
It is understood then in this decomposition that each intersection of $j$-cells in a $(j-1)$-cell corresponds to the connected isotropy subgroup of the $T^k$-action generated by 
the isotropy subgroups associated to each of the $j$-cells.

The simplest possible decomposition is as in Lemma \ref{l:cs1}, where the decomposition of $(S^+, p^*)$ is given as $\{(U_k, T_k)\}$ and is illustrated in the right hand figure of Figure \ref{F:1}. The next simplest is given as $\{(U_{k-1},  T_{k-1}) (U_k, T_k)\}$ and is illustrated in Figure \ref{F:2}.
The most general decomposition will be the one where $p^*$ corresponds to an orbit with $T^i$ isotropy, and whose weighted decomposition is  $ \{(U_{k-i+1}, T_{{k-i+1}}), \hdots, (U_k,T_{k})\}$.
Note that 
all non-trivial isotropies are connected and correspond to points on $S^+$, all other orbits are principal.

 \begin{figure}[!b]
\centering
\includegraphics[scale=0.8]{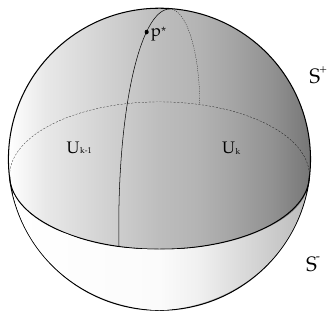}
\caption{The upper hemisphere  $S^+ = U_{k-1} \cup U_k$.}
\label{F:2}
\end{figure}

In the following lemma, we begin with the simplest  case and show that we can construct a cross-section for $C^*$.
 
 \begin{lemma}  \label{l:cs1}  Let $T^k = T_1 \times \dots \times T_k$ be a smooth, locally standard action on a smooth, closed $n$-dimensional subspace $C\subset M$, where $M$ is a smooth, closed $n$-dimensional closed manifold, and $C$ has quotient space, $C^*$,  as described above. 
 Suppose  $(S^+, p^*)=\{(U_k, T_k)\}$.
 Then the following hold:
 \begin{enumerate}
 \item There exists a cross-section from $C^*$ to $C$ that is smooth on a closed subset of $C^*\setminus \partial C^*$; and

 \item If a cross-section is given on an $(n-k-1)$-cell $A \subseteq S^-\subset C^*$,  it can be extended to all of $C^*$.  
 \end{enumerate}
  \end{lemma}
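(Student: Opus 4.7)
The plan is to exploit that, in this simplest case, the only non-trivial isotropy subgroup appearing in $C$ is the circle $T_k$, which intersects $T^{k-1} := T_1 \times \cdots \times T_{k-1}$ trivially. Hence $T^{k-1}$ acts freely on all of $C$, and the orbit map factors as
\[
\pi \colon C \xrightarrow{\,q_1\,} \tilde C := C/T^{k-1} \xrightarrow{\,q_2\,} C^{*},
\]
where $q_1$ is a principal $T^{k-1}$-bundle and $q_2$ is the orbit map of the residual $T_k$-action. A cross-section of $\pi$ is equivalent to the data of a section $\bar s$ of $q_2$ together with a section of the pullback bundle $\bar s^{*} q_1$. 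Since $C^{*} \cong D^{n-k}$ is contractible, any principal $T^{k-1}$-bundle over it is trivial, so the second piece of data is unobstructed. The proof therefore reduces to constructing (and, where needed, extending) a section of the circle quotient $q_2$.

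By the slice theorem applied at a point of $\tilde C$ over $S^{+}$, the $T_k$-representation on the normal slice is faithful with fixed locus of dimension $\dim S^{+} = n-k-1$, forcing the normal slice to be a single copy of the standard rotation representation on $\mathbb{C}$. Thus $\tilde F := q_2^{-1}(S^{+})$ is a smooth codimension-two submanifold of $\tilde C$, and an equivariant tubular neighborhood of $\tilde F$ is a $T_k$-disk bundle whose zero section descends to a continuous section $s_V$ of $q_2$ over a closed collar $V$ of $S^{+}$ in $C^{*}$. On the complementary closed disk $W := \overline{C^{*} \setminus V}$ (which contains $S^{-}$), the $T_k$-action on $q_2^{-1}(W)$ is free, so $q_2|_W$ is a trivial principal $T_k$-bundle over the contractible base $W$, and we pick any section $s_W$. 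On the overlap $V \cap W$, the sections $s_V$ and $s_W$ differ by a continuous map $g\colon V \cap W \to T_k \cong S^1$; since $V \cap W$ deformation retracts onto $\partial V$ whose inclusion into the contractible disk $W$ is null-homotopic, $g$ extends to $\tilde g\colon W \to T_k$. Replacing $s_W$ by $\tilde g^{-1}\cdot s_W$ yields a section agreeing with $s_V$ on the overlap, and gluing produces a global cross-section of $q_2$, hence of $\pi$.

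For the extension statement, the pre-given cross-section on $A \subseteq S^{-}$ lies entirely over the free locus $W$, where $\pi^{-1}(W) \cong W \times T^k$ is a trivial principal $T^k$-bundle. Under a chosen trivialization, the pre-given section corresponds to a continuous map $\phi\colon A \to T^k$; since $A$ is a closed cell (hence contractible) and $W$ is a disk, $\phi$ is null-homotopic and extends to a map $\tilde\phi\colon W \to T^k$, producing a section on $W$ restricting to the given one on $A$. One then glues across $V \cap W$ as in the previous paragraph, but this time modifying the slice-theorem section $s_V$ rather than $s_W$, so as to preserve the pre-fixed values on $A$. The chief obstacle is the consistent gluing on the overlap $V \cap W$, but it is unobstructed because the relevant obstruction classes lie in relative cohomology groups of the form $H^{*}(W, A; \pi_{*}(T^{k}))$ associated to pairs of disks, all of which vanish.
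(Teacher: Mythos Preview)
Your argument is essentially correct and follows the same overall architecture as the paper---factoring $\pi$ through $\tilde C = C/T^{k-1}$ and handling the $T^{k-1}$-part (free, trivial bundle over a disk) separately from the $T_k$-part (circle action with a codimension-two fixed locus). There are, however, some differences in execution worth noting.

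For existence, the paper invokes the cone theorem (Theorem~\ref{cone}) to identify $C$ globally as $T^{k-1}\times D^{n-k+1}$ with $T_k$ acting orthogonally on the disk factor; the section is then immediate. You instead build the section of $q_2$ locally from an equivariant tubular neighborhood of the fixed set $\tilde F$ and glue with a section over the free locus via a clutching map into $T_k$. This is more elementary (it avoids the cone theorem) but slightly more work; one small imprecision is that the ``zero section'' of the normal $D^2$-bundle is $\tilde F$ itself and only gives a section of $q_2$ over $S^+$---to get a section over the full collar $V$ you need to choose a trivialization of the (necessarily trivial, since $\tilde F$ is contractible) normal circle bundle, which you should make explicit.

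For the extension, the paper first extends the given section from $A$ to all of $D^{n-k}\setminus S^+$ by obstruction theory on the free principal bundle, then pushes down to $\bar C_1$, extends continuously across the fixed set $S^+$, and lifts back through the $T^{k-1}$-bundle. Your clutching argument (extend from $A$ to $W$, then modify $s_V$ to match on $V\cap W$) is equivalent, but your final sentence cites the wrong obstruction group: when you modify $s_V$ rel $V\cap W$, the obstruction lies in $H^*(V, V\cap W;\pi_*(T^k))$, not $H^*(W,A;\pi_*(T^k))$. Both vanish (they are pairs of contractible cells), so the conclusion stands.

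The main advantage of the paper's organization is that its layered ``push down, extend through fixed points, lift back'' scheme generalizes directly to the inductive proof of Lemma~\ref{cs4}, where the vertex has isotropy $T^i$ with $i>1$ and one must peel off the circle factors one at a time. Your clutching argument is clean for the single-circle case but would require reworking to handle the stratified situation with several isotropy types meeting at $p^*$.
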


\begin{proof} To prove Part 1, 
recall that the orbit space, $C^*$, is a closed cone with vertex $p^*$, the north pole of $D^{n-k}$.  Since  the orbit structure is conical and $G_p=T_k$, 
by Theorem \ref{cone}, $C$ is equivariantly homeomorphic to $T^{k-1} \times D^{n-k+1}$ with $T_k$ acting orthogonally on $ D^{n-k+1}$.  Since the $T^k$ action is locally standard, we can 
construct a cross-section from $D^{n-k}$ to $D^{n-k+1}$, see  Remark \ref{conethm}.  
We then compose this section with a section from $D^{n-k+1}$ to $T^{k-1}\times D^{n-k+1}$ by sending an arbitrary point $x\in D^{n-k+1}$ to $(t, x)\in T^{k-1}\times D^{n-k+1}$ for some $t\in T^{k-1}$.  This gives us a cross-section from $C^*$ to $C$. We denote it by $s_1$.

We note that since $C^*\setminus \partial C^*$ is homeomorphic to an open $(n-k)$-disk and corresponds entirely to principal orbits, its inverse image in $C$ is the trivial bundle $T^k\times D^{n-k}$, and so there exists a smooth cross-section on $C^*\setminus \partial C^*$, which we will denote by $s_2$. 

Since $C^*$ is a cone over $\partial C^*$, we may write 
$C^*=K(\partial C^*)=(\partial C^*\times [0, 1])/(\partial C^*\times \{0\})$ 
and define the following straight line homotopy between $s_2$ and $s_1$ over 
$\partial C^*\times [\frac{1}{2}, 1]\subset K(\partial C^*)$:

$$h(x, t)=(2t-1)s_1(x)+ (2-2t)s_2(x), \forall x\in \partial C^*\times [\frac{1}{2}, 1].$$
We may then define a cross-section $s(x, t)$ on $C^*=K(\partial C^*)$ as
$$s(x, t)=\begin{cases} 
s_2(x)    & t\in [0, \frac{1}{2}]\\
h(x, t) &t\in[\frac{1}{2}, 1]\\

\end{cases}\,\,,$$
noting that the cross-section $s(x, t)$ is smooth on a closed subset of $C^*\setminus \partial C^*$, as desired.   Note that this homotopy can be modified to make the closed subset as  close to the boundary as we like.

    To prove Part 2, suppose a cross-section $s$ is given on a $(n-k-1)$-cell $A \subseteq S^{-} \subset C^* $.  Let $A' = A \cap (D^{n-k} \,\setminus \, S^{+})$ and 
$\pi: C \longrightarrow C^{*} \cong D^{n-k}$ the orbit map.  Then $\pi^{-1}(D^{n-k} \,\setminus \,S^{+})$ is a principal $T^{n-k}$-bundle over $(D^{n-k} \,\setminus \,S^{+})$.
Using the long exact sequence for relative cohomology and excision, it follows that $H^i((D^{n-k} \,\setminus \,S^{+}), A') = 0$ for all $i >0$. Thus, by obstruction theory, we may assume that $s$ is defined on $(D^{n-k} \,\setminus\,S^{+}) \cup A$.  

We now obtain the  diagram below,
where  $\pi_1:C\rightarrow \bar{C}_1=C/T^{k-1}$ and
$\pi_2:\bar{C}_1\rightarrow C^{*} \cong D^{n-k}$.

\begin{equation*}
\begin{split}
\xymatrix{
& {} &  C   \ar[d]^{\pi} \ar[r]^{\pi_1}   \ar[d]^{\pi}  &  {\bar{C}}_1    \ar[ld]^{\pi_2} 
  &   &\\
& {} (D^{n-k} \,\setminus\,S^{+}) \cup A \ar[ru]^{s}\ar[r]  &D^{n-k} &&} 
\end{split}
\end{equation*}

Let 
$s_2 = \pi_1 \circ s$.  Then $s_2$ is a cross-section to $\pi_2$ defined on $(D^{n-k}\, \setminus\, S^{+}) \cup A$.  But $S^{+}$ corresponds to the set of fixed points
of the $T_k$-action on ${\bar{C}}_1$, so we can define $s_2$ on all of $D^{n-k}$.   In order to show continuity of $s_2$ we first describe $D^{n-k}$ as $I \times S^{+}$, 
where $I$ is an interval.   Note that $\pi_2^{-1}(S^{+}) \cong D^{n-k-1}$ and 
$\pi_2^{-1}(I) \cong K(T_k) \cong D^2$ and the $T_k$-action on $\pi_2^{-1}(I) $ is rotation.  Hence the $T_k$-action on $C_1 \cong D^{n-k+1} \cong D^2 \times D^{n-k-1}$ is 
rotation on the first factor and trivial on the second.  An orbit of the $T_k$-action can be described as $\{(r e^{i \theta}, R e^{i \Theta}) | 0 \le \theta < 2\pi, 0 \le \Theta < 2 \pi \}$ where 
\[
R e^{i \Theta} =\begin{cases}
 (r_1 e^{i \theta_1}, \dots, r_{{n-k-1}\over 2} e^{i \theta_{{n-k-1}\over 2}}) &\textrm{ if $n-k-1$ is even, } \\
 (r_1 e^{i \theta_1}, \dots, r_{{n-k-2}\over 2} e^{i \theta_{{n-k-2}\over 2}},1) & \textrm{ if $n-k-1$ is odd. } 
\end{cases}
\]
 Note that the fixed point set of $T_k$ on $D^2 \times D^{n-k-1}$ is 
$\{0\} \times D^{n-k-1}$.  Now let $q=(0,S e^{i\Sigma}) \in \text{Fix}(T_k, D^2 \times D^{n-k-1})$ and let $\{q_n^*\} = \{(r_ne^{i\theta_n}, R_n e^{i\Theta_n})^*\}$ be a sequence 
in $M^*$ converging to $q^*$.  Then $r_n \longrightarrow 0, R_n \longrightarrow S$ and $\Theta_n \longrightarrow \Sigma$.  But then the sequence $\{s_2(q_n^*)\}$ 
will be of the form $\{(r_n e^{i\theta_n}, R_n e^{i \Theta_n})\}$, which converges to $q=(0,S e^{i \Sigma})$.  Hence $s_2$ is continuous.  

 Next we define a cross-section $s_1$  on $s_2(C^*) \cong D^{n-k}$.  Let
$s_1 = s \circ \pi_2: s_2(C^*) \setminus s_2(S^+ \,\setminus\, A) \rightarrow C$.  Now $\pi_1^{-1}(s_2(C^*))$ is a principal $T^{k-1}$-bundle over $s_2(C^*) \cong D^{n-k}$ and we obtain that
 $s_1$ is in fact a cross-section of $\pi_1$ defined on $s_2(C^*) \setminus s_2(S^+ \,\setminus \,A) $.  Since $s_2(S^+ \,\setminus \,A) $ is a homology $(n-k-1)$-cell on the boundary of 
 $s_2(C^*)$, it follows that $H^i(s_2(C^*) , s_2(C^*) \setminus s_2(S^+ \,\setminus \,A)) = 0$ for all $i > 0$.  Again by obstruction theory this implies that $s_1$ can be extended
 to all of $s_2(C^*)$.  Thus $s_1 \circ s_2$ extends $s$ to all of $C^*\cong D^{n-k}$.  
 The diagram below illustrates this case.
   \begin{equation*}
\begin{split}
\xymatrix{
& {} &  C   \ar[r]^{\pi_1}   \ar[d]^{\pi}  &  {\bar{C}}_1    \ar[ld]^{\pi_2} \ar@/_2pc/[l]_{s_1}   &    \\
& {} &  D^{n-k}  \ar@/^+2pc/[u]_{s}  \ar@/_2pc/[ur]_{s_2}  & {}   }
\end{split}
\end{equation*} 
\end{proof}

We are now ready to construct a cross-section for a general decomposition of $C^*$. 

\begin{lemma}\label{cs4} Let $T^k = T_1 \times \dots \times T_k$ be a smooth, locally standard action on a smooth, closed $n$-dimensional subspace $C\subset M$, where $M$ is a smooth, closed $n$-dimensional closed manifold, and $C$ has quotient space, $C^*$, as described above. 
 Let the decomposition of $(S^+, p^*)$ be given by $\{(U_{k-i+1}, T_{{k-i+1}}), \hdots, (U_k,T_{k})\}$, with $1< i\leq n-k\leq k$.  Then the following hold: 
 \begin{enumerate}
 \item
 There exists a cross-section  from $C^*$ to $C$ that is smooth on a closed subset of $C^*\setminus \partial C^*$; and 
 \item If a cross-section is given on an $(n-k-1)$-cell $A \subseteq S^-$, then it can be extended to all of $C^*$.  
   \end{enumerate}
 \end{lemma}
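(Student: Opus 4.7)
The plan is to generalize the argument of Lemma \ref{l:cs1} by upgrading its local model: there the isotropy at $p^*$ is the single circle $T_k$, whereas here it is the larger torus $T^i = T_{k-i+1} \times \cdots \times T_k$. Since the orbit structure on $C^* = \overline{C(S^-)}$ is conical with vertex $p^*$ and $G_{p^*} = T^i$, Theorem \ref{cone} provides an equivariant homeomorphism
\begin{equation*}
C \;\cong\; T^{k-i} \times D^{n-k+i},
\end{equation*}
in which $T^{k-i}$ acts freely by left translation and $T^i$ acts orthogonally on $D^{n-k+i}$. Because each circle $T_j$ (for $k-i+1 \le j \le k$) fixes a codimension-two disc corresponding to the facet $U_j$, and the $T^i$-action is linear and effective, the orthogonal representation splits as
\begin{equation*}
D^{n-k+i} \;\cong\; \underbrace{D^2 \oplus \cdots \oplus D^2}_{i \text{ copies}} \oplus D^{n-k-i},
\end{equation*}
where $T_j$ rotates the $j$-th $D^2$ factor and acts trivially on the remaining $D^{n-k-i}$; this trivial summand exists precisely because the hypothesis forces $i \leq n-k$.

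From this standard form I would construct the cross-section directly. Identifying $C^* \cong [0,1]^i \times D^{n-k-i}$ by quotienting the model, set
\begin{equation*}
s\bigl((r_1,\ldots,r_i), x\bigr) \;=\; \bigl(1,\, (r_1, 0), \ldots, (r_i, 0),\, x\bigr) \;\in\; T^{k-i} \times (D^2)^i \times D^{n-k-i},
\end{equation*}
where $1$ denotes the identity of $T^{k-i}$ and $(r_j, 0)$ is the non-negative real point of the $j$-th $D^2 \subset \mathbb{C}$. Continuity at points of $S^+$ (where some $r_j$ vanishes) follows from the rotational-continuity argument at the end of the proof of Lemma \ref{l:cs1}, applied coordinate-wise to each of the $i$ rotational $D^2$-factors rather than just once.

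For the extension statement, suppose $s$ is prescribed on an $(n-k-1)$-cell $A \subseteq S^-$. Writing $A' = A \cap (C^* \setminus S^+)$, the action is principal over $C^* \setminus S^+$, so $\pi$ restricts to a principal $T^k$-bundle there; by excision $H^j(C^* \setminus S^+, A') = 0$ for $j > 0$, and obstruction theory extends $s$ to $(C^* \setminus S^+) \cup A$. To extend across $S^+$, factor $\pi$ as $C \xrightarrow{\pi_1} \bar{C}_1 = C/T^{k-i} \xrightarrow{\pi_2} C^*$ exactly as in Lemma \ref{l:cs1}. The composition $s_2 = \pi_1 \circ s$ is a section of $\pi_2$ on $(C^* \setminus S^+) \cup A$, and using the orthogonal model from the first step it extends continuously to all of $C^*$ by the non-negative real section on each $D^2$-summand. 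Finally, $\pi_1$ is a principal $T^{k-i}$-bundle, and a second obstruction-theoretic lift (using $H^j(s_2(C^*), s_2(S^+ \setminus A)) = 0$ for $j > 0$) produces a section $s_1$ of $\pi_1$ on $s_2(C^*)$; then $s_1 \circ s_2$ is the desired extension to all of $C^*$.

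The principal obstacle is the continuity check at points where several of the $r_j$ simultaneously vanish, i.e.\ at the deeper strata $U_{j_1} \cap \cdots \cap U_{j_l}$ of $S^+$. This is precisely why the explicit orthogonal splitting in the first step is required: it aligns the stratification of the section's image inside $D^{n-k+i}$ with the stratification of $S^+$, so that the coordinate-wise rotational continuity argument of Lemma \ref{l:cs1} can be applied in each $D^2$-factor independently, and the obstruction-theoretic extension machinery of Lemma \ref{l:cs1} then carries through verbatim.
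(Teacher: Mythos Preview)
Your existence argument is correct and matches the paper's. The gap is in the extension step, specifically the claim that $s_2 = \pi_1 \circ s$ extends continuously from $(C^* \setminus S^+) \cup A$ to all of $C^*$ via the two-step factorization $C \xrightarrow{\pi_1} \bar C_1 = C/T^{k-i} \xrightarrow{\pi_2} C^*$. In Lemma~\ref{l:cs1} this works because $S^+$ is exactly the fixed-point set of the single circle $T_k$ acting on $\bar C_1$, so the fibre of $\pi_2$ over every point of $S^+$ is a single point and continuity is forced. Here, however, $T^i$ acts on $\bar C_1 \cong (D^2)^i \times D^{n-k-i}$, and over a point in the interior of a single facet $U_j$ (where only $r_j = 0$) the fibre of $\pi_2$ is a torus $T^{i-1}$, not a point. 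In your coordinates $s_2 = \bigl((r_1 e^{i\theta_1}, \ldots, r_i e^{i\theta_i}), x\bigr)$: as you approach such a boundary point the $j$-th component collapses to $0$ regardless of $\theta_j$, but for each $l \neq j$ you must still produce a continuous extension of $\theta_l$ across $U_j$. That is a genuine obstruction-theoretic extension of a map to $S^1$, not a continuity check, and the ``non-negative real section on each $D^2$-summand'' cannot simply be imposed on $S^+$ without clashing with the angles already prescribed on $(C^* \setminus S^+) \cup A$.

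The paper handles this by refining the factorization to $i+1$ stages: quotient first by the free $T^{k-i}$, and then by each circle $T_{k-i+1}, \ldots, T_k$ one at a time. At each intermediate stage one has a section of a genuine \emph{circle} quotient; obstruction theory (vanishing of the relevant relative cohomology) extends it over the complement of the single facet fixed by that circle, and then the rotational-continuity argument of Lemma~\ref{l:cs1} extends it over that one facet. Iterating $i$ times fills in all of $S^+$. Your closing paragraph gestures at applying the Lemma~\ref{l:cs1} machinery ``in each $D^2$-factor independently,'' which is exactly this iteration, but the two-step scheme you actually wrote out does not implement it.
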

  \begin{proof} To prove Part 1, 
we suppose that  $\{\{U_{k-i+1}, \hdots, U_k\}, T^i=T_{k-i+1}\times \cdots\times T_{k}\}$ is the decomposition for $(S^+, p^*)$.  The orbit structure is conical with  vertex $p_{*}$ where $G_p = T_{k-i+1} \times\cdots \times  T_k$.  By Theorem \ref{cone}, $C$ is equivariantly homeomorphic to $T_{1} \times \cdots {T_{k-i}} \times   D^{n-k+i} \cong T^{k-i} \times D^{n-k+i}$ and $T_{k-i+1}\times \cdots\times T_{k}\cong T^i$ acts orthogonally on $D^{n-k+i}$. It is easy to construct a cross-section in this case, following the proof of Lemma \ref{l:cs1}. Again, since the $T^k$ action is locally standard, we can 
construct a cross-section from $D^{n-k}$ to $D^{n-k+i}$, see  Remark \ref{conethm}.  Then we construct a section from $D^{n-k+i}$ to $T^{k-i}\times D^{n-k+i}$ by sending an arbitrary point $x\in D^{n-k-i}$ to $(t, x)\in T^{k-i}\times D^{n-k+i}$ for some $t\in T^{k-i}$. 

Using the same homotopy argument as in the proof of Part 1 of Lemma \ref{l:cs1}, 
we can then construct a  cross-section  from $C^*$ to $C$ that is smooth on a closed subset of $C^*\setminus \partial C^*$, as desired.

To prove Part 2, suppose a cross-section $s$ is given on a $(n-k-1)$-cell $A \subseteq S^{-}$.  As In Lemma \ref{l:cs1} we may assume 
that $s$ is defined on $ (D^{n-k} \setminus S^{+}) \cup A$.  We now obtain the  diagram below,  
where $\pi_1:C\rightarrow \bar{C}_1=C/T^{k-i}$, $\pi_j:\bar{C}_{j-1}\rightarrow \bar{C}_j=\bar{C}_{j-1}/T_{k-i-j+2}$, for $j\in\{2, \hdots, i\}$ and $\pi_i:\bar{C}_i\rightarrow C^{*} \cong D^{n-k}$.

 \begin{equation*}
\begin{split}
\xymatrix{
& {} &  C   \ar[rr]_{\pi_1}^{/T^{k-i}}  \ar[ddd]^{\pi}  & {} &  \bar{C}_1      
 \ar[d]_{\pi_2}    &    \\
& {} &  {}      & {} &  \vdots   \ar[d]_{\pi_{i}} 
 \\
& {} & {}    & {} &  \bar{C}_{i}   \ar[lld]_{\pi_{i+1}}  
 \\
&  (D^{n-k} \setminus S^{+}) \cup A\ar[ruuu]^{s}
\ar[r]    & D^{n-k}   
\\
}
\end{split}
\end{equation*}

Let $s_{i+1} = \pi_{i} \circ \pi_{i-1} \circ \dots \circ \pi_1\circ  s$.  Then $s_{i+1}$  is a cross-section to $\pi_{i+1}$ defined on $(D^{n-k} \setminus S^{+}) \cup A$. 
Now $\pi_{i+1}^{-1}(D^{n-k} \setminus U_k)$ is a principal circle bundle over $D^{n-k} \setminus U_k$, and since $H^q(D^{n-k} \setminus U_k, (D^{n-k} \setminus S^{+}) \cup A) = 0$
for all $ q > 0$, $s_{i+1}$ can be extended to all of $D^{n-k} \setminus U_k$.  But $U_k$  corresponds to the set of fixed points
of the $T_k$ action on $\bar{C}_{i}$, so  $s_{i+1}$ can be extended continuously to all of $D^{n-k}$, exactly as in the proof of  Lemma \ref{l:cs1}.

We assume then that we have a cross-section to $\pi_{l+1}$ defined on $s_{l+2}\circ\cdots \circ s_{i+1}(D^{n-k})\cong s_{l+2}(D^{n-k})$.
Next we need a cross-section to $\pi_l$ defined for any $l$, $2\leq l\leq i$  and on $s_{l+1}(D^{n-k})$.
Let
$$s_l= \pi_{l-1}\circ \cdots \circ \pi_1\circ s\circ \pi_{i+1} \circ\cdots \pi_{l+1}.$$
Again, $\pi_{i}^{-1}(s_{l+1}(D^{n-k}) \setminus s_{l+1}(U_{k-l}))$ is a principal circle bundle over $s_{l+1}(D^{n-k}) \setminus s_{l+1}(U_{k-l})$ and 
$s_{l}$ can be extended to all of $s_{l+1}(D^{n-k}) \setminus s_{l+1}(U_{k-l})$.  Then it follows that $s_{l+1}(D^{n-k}) \cong D^{n-k}, s_{l+1}(U_{k-l}) \cong D^{n-k-1} \cong 
\partial(s_{l+1}(D^{n-k}))$, and  $\pi_{l}^{-1}(s_{l+1}(U_{k-l}))$ is the fixed point set of the $T_{k-l}$ action on  $\pi_{l}^{-1}(s_{l+1}(D^{n-k}))$.  Hence 
exactly as before we may extend $s_{l}$ continuously to all of $s_{l+1}(D^{n-k})$.

We now arrive at  $\pi_1^{-1} ( (s_2 \circ s_3 \circ \dots \circ s_{i+1})(D^{n-k}))$ is a principal $T^{k-i}$-bundle over 
$(s_2 \circ s_3 \circ \dots \circ s_{i+1})(D^{n-k}) \cong D^{n-k}$ and $s_1 = s \circ \pi_{i+1} \circ \dots \circ \pi_2$ is a cross-section defined on 
$(s_2 \circ s_3 \circ \dots \circ s_{i+1})((D^{n-k} \setminus S^{+}) \cup A).$  As before this can be extended to all of $(s_2 \circ s_3 \circ \dots \circ s_{i+1})(D^{n-k})$.  
Hence $s_1 \circ s_2 \circ s_3 \circ \dots \circ s_{i+1}$ is an extension of $s$ to all of $C^*$.  
The diagram below illustrates this case.
 \begin{equation*}
\begin{split}
\xymatrix{
& {} &  C   \ar[rr]_{\pi_1}^{/T^{k-i}}  \ar[ddd]^{\pi}  & {} &  \bar{C}_1   \ar@/_2pc/[ll]_{s_1}     \ar[d]_{\pi_2}    &    \\
& {} &  {}      & {} &  \vdots   \ar[d]_{\pi_{i}}  \ar@/_1pc/[u]_{s_2}  \\
& {} & {}    & {} &  \bar{C}_{i}   \ar[lld]_{\pi_{i+1}}  \ar@/_1pc/[u]_{s_i}  \\
& {} & D^{n-k}  \ar@/^+2pc/[uuu]_{s}    \ar@/_1.5pc/[urr]_{s_{i+1}} \\
}
\end{split}
\end{equation*}

\end{proof}

 We are now ready to prove the Cross-Sectioning Theorem \ref{t:ecst}.

\begin{proof}[Proof of  the Cross-Sectioning Theorem \ref{t:ecst}]  First, since the quotient space admits a conical decomposition, we decompose 
the orbit space $M^* \cong D^{n-k}$ into a collection of conical sections $\{C_i^*\}_{i=1}^{m}$, with $C_i^*\cong D^{n-k}$ for each $i\in \{1, \hdots, m\}$, and such that the orbit structure for each $C_i \subset M^n$ is conical as in Definition \ref{defncone}. 
As we saw in the proof of Lemma \ref{cs4}, each $C_i \cong T^{k-l} \times D^{n-k+l}$ for some $l\in\{1,\dots k\}$ and therefore a cross-section 
exists on each $C_i^*$.   

In order to create a cross-section on all of $M^*$, 
we start by defining a cross-section $s$ on $C_1^*$.  We then attach $C_2^*$ to $C_1^*$ along an $(n-k-1)$-cell 
 $A_1$ and so we have a cross-section $s$ defined on $C^*_1$ and on $A_1 \subset S^{-} \subset C_2^*$.  By Lemma \ref{cs4}, we can then  extend the cross-section $s$ to all of $C_2^*$.   Continuing this process, we attach each additional $C_i^{*}$ along an $(n-k-1)$-cell $A \subset S^{-} \subset C_i^*$.  Therefore $s$ can be extended to all of  $C_i^{*}$.  It follows that we can extend $s$ to all of $M^{*}$. Figure \ref{F:4} illustrates this process for a partial decomposition of $M^*$. 
 
 Using the same homotopy argument as in the proofs of Lemmas \ref{l:cs1} and \ref{cs4}, 
we can then construct a  cross-section  from $M^*$ to $M$ that is smooth on a closed subset of $M^*\setminus \partial M^*$, as desired.

\end{proof}

 \begin{figure}[!b]
\centering
\includegraphics[scale=0.8]{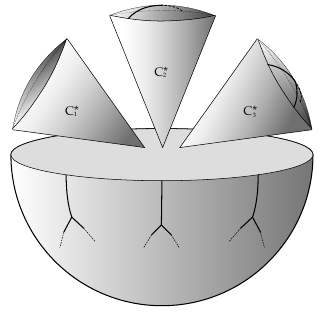}
\caption{A partial decomposition of the quotient space, $M^*$, highlighting three conical sections in  the upper hemisphere of $M^*$. The tree-like lines on $M^*$ represent higher rank isotropy groups.}
\label{F:4}
\end{figure}


 \subsection{Equivariant Classification Theorem}

For classification purposes it is convenient to fix orientations.  We start with a fixed orientation of the group $G$.  Then an orientation of $M$ determines an orientation of 
$M^*$ and vice versa, assuming there are no isotropy groups which reverse the orientation of a slice.  When the orbit map $\pi:M\rightarrow M^*$ has a {\em cross-section $s$}, that is, $s: M^* \rightarrow M$ is a continuous map such that $\pi\circ s$ is the identity on $M^*$, we always assume that 
the orientation of $s(M^*)$ is the one induced by the cross-section and the orientation of $M$ is compatible with it.

 We begin with a generalization of the equivariant homeomorphism theorems of \cite{OR3} and \cite{O}.

\begin{theorem}\label{t:homeoect}  Let $G$, a compact Lie group,  act continuously on  closed, $n$-dimensional manifolds, $M_1^n$ and  $M^n_2$  such that the orbit maps $\pi_i: M_i^n\rightarrow M_i^*$ are continuous  and have continuous cross-sections $s_i:  M_i^*\rightarrow M_i^n$, $i=1, 2$. 
 Then there exists a $G$-equivariant homeomorphism $h$ from  $M^n_1$ onto  $M^n_2$ if and only if there exists a weight preserving homeomorphism $h^*$ from $M^*_1$ onto  $M^*_2$.  
 
 Furthermore, if $M_1$ and $M_2$ are oriented and the orientations of $M_1^*$ and $M_2^*$ are those induced by 
$M_1$ and $M_2$, then $h$ is orientation preserving if and only if $h^*$ is orientation preserving.  
\end{theorem}
\begin{proof}
Let $M^n_1$ and $M^n_2$ be closed $G$-manifolds such that their respective orbit maps $\pi_i$, $i=1, 2$ admit continuous cross-sections $s_i$, $i=1, 2$. Denote their quotients as $M^*_1$ and $M^*_2$, respectively. 
Recall that a weight preserving homeomorphism is a continuous strata-preserving map with continuous inverse that carries the weights of $M^*_1$ isomorphically to those of $M^*_2$. 
So, if $h: M^n_1\longrightarrow M^n_2$ is a $G$-equivariant homeomorphism, then define $h^*:=\pi_2\circ h\circ s_1$. Since $h^*$ is a composition of continuous maps that is $1$-to-$1$ and onto, with continuous inverse, $(h^*)^{-1}:=\pi_1\circ h\circ s_2$, $h^*$ is the desired   weight preserving homeomorphism from $M^*_1$ to $M^*_2$.

Vice versa, if we assume that $h^*: M^*_1 \longrightarrow M^*_2$ is a weight preserving homeomorphism, then  both the composition $s_2\circ h^*\circ \pi_1$
 and the inverse of this composition, $\pi_2\circ (h^*)^{-1}\circ s_1$, are clearly continuous.  For $x\in s_1(M_1^*)$, we  define $h: M^n_1\rightarrow M^n_2$ so that $h(x)=s_2\circ h^*\circ \pi_1(x)$. Then for any $y\in M^n_1$, since $y=gx$ for some $g\in G$ and some $x\in s_1(M_1^*)$, we define  $h(y)=gh(x)$.  This gives us the desired $G$-equivariant homeomorphism.
 \end{proof}

We now recall a corollary of the  Whitney Approximation Theorem (see also Corollary 6.27 in Lee  \cite{Lee}), which states the following.

\begin{corollary}\label{extension}\cite{Lee}  Suppose $N$ is a smooth manifold with or without boundary, $M$ is a smooth manifold without boundary, $A\subset N$ is a closed subset and $f:A\rightarrow M$ is a smooth map. Then $f$ has a smooth extension to $N$ if and only if it has a continuous extension to $N$.
\end{corollary}

Using Corollary \ref{extension}, we can now extend the ``if" statement of Theorem \ref{t:homeoect} to equivariant diffeomorphism. 
\begin{ectthm}\label{t:ect}  Let $G$, a compact Lie group,  act smoothly on smooth, closed, $n$-dimensional manifolds, $M_1^n$ and  $M^n_2$ whose quotient spaces $M^*_1$ and $M^*_2$ are manifolds with corners. Moreover, assume that for $i=1, 2$, the orbit maps $\pi_i: M_i^n\rightarrow M_i^*$ are  smooth  and have continuous cross-sections $s_i:  M_i^*\rightarrow M_i^n$,  that are smooth on the closed subsets $N_1^*\subset M_1^*\setminus \partial M_1^*$ and $N_2^*=h^*(N_1^*)$. 
 If  there exists a weight preserving diffeomorphism $h^*$ from $M^*_1$ onto  $M^*_2$, then there exists a $G$-equivariant diffeomorphism $h$ from  $M^n_1$ onto  $M^n_2$.  
 
 Furthermore, if $M_1$ and $M_2$ are oriented and the orientations of $M_1^*$ and $M_2^*$ are those induced by 
$M_1$ and $M_2$,   then $h$ is orientation preserving if $h^*$ is orientation preserving.  
\end{ectthm}
\begin{proof}
Let $M^n_1$ and $M^n_2$ be closed $G$-manifolds such that their respective orbit maps $\pi_i$, $i=1, 2$ admit continuous cross-sections $s_i$, $i=1, 2$, and  their respective restrictions to $N_1^*$ and $N_2^*$ are smooth. Denote their quotients as $M^*_1$ and $M^*_2$, respectively.   
Recall that a weight preserving diffeomorphism is a weakly smooth, strata-preserving map with weakly smooth inverse that carries the weights of $M^*_1$ isomorphically to those of $M^*_2$.

By assumption, there exists a continuous cross-section to the orbit map $\pi_i: M_i^n\rightarrow M_i^*$ that is smooth on the closed subset  $N_i^*$, for $i=1, 2$. 
Since the $\pi_i$ are smooth, they are continuous,  so we have that $N_i=\pi_1^{-1}(N_i^*)$ are closed subsets of $M_i$, for $i=1, 2$. 
Since $h^*: M^*_1 \longrightarrow M^*_2$ is a weight preserving diffeomorphism, then  both the composition $s_2\circ h^*\circ \pi_1$
 and the inverse of this composition, $s_1\circ (h^*)^{-1}\circ \pi_2$, are clearly continuous and are simultaneously smooth when restricted to  $N_1$ and $N_2$, respectively.  Let $x\in s_1(M_1^*)$ and define $h(x)=s_2\circ h^*\circ \pi_1(x)$. Since  for any $y\in M_1$, we have $y=gx$ for some $g\in G$,  
 we then define $h: M_1\rightarrow M_2$ by $h(y)=gh(x)$.

We may now  apply 
 Corollary \ref{extension} to obtain that $h:M_1\rightarrow M_2$ is smooth. A similar argument works to show that $h^{-1}$ is smooth. Thus $h$ is a $G$-equivariant diffeomorphism from $M_1$ to $M_2$.
 \end{proof}

\begin{remark} 
 The ``only if" statement of Theorem \ref{t:homeoect} cannot be extended to the smooth setting with the techniques of the proof of Theorem \ref{t:ect}, as  Corollary \ref{extension} is, in general, false  if the target manifold, $M$,  has boundary (see Problem 6-7 in \cite{Lee}).  \end{remark}
 

\subsection{Creating a weight preserving diffeomorphism}

In this subsection we prove Theorem \ref{c:wpd}, which is an essential component of the proof of Theorem \ref{t:thma}.
We begin with the following extension of work of Oh \cite{Oh2}. 

\begin{lemma}\label{ohthesis} Let $T^k$ act smoothly and effectively on $M^n$, a closed, manifold with $M^n/T^k=D^{n-k}$ such that all isotropy subgroups are connected and all singular orbits correspond to points on the boundary of the quotient space. Then $\pi_1(M)$ is trivial if and only if the isotropy subgroups of the $T^k$-action on $M^n$ generate  $T^k$.
\end{lemma}
\begin{proof}
The ``if" portion of the statement is a straightforward generalization of the proof of Corollary 1.2 in \cite{Oh2}, so we prove the ``only if" statement.
The proof generalizes the proof of Theorem 1.1 in \cite{Oh2}. 
By assumption, $k\leq n-1$.
If $\alpha$ is an element of $\pi_1(M)$, then by the Whitney embedding theorem, there is an embedding $f:S^1\rightarrow M$ which represents $\alpha$. By transversality, $f$ is homotopic to $g: S^1\rightarrow M_{\mathrm{reg}}$, where $M_{\mathrm{reg}}$ is the regular part of $M$, that is, the union of all principal $T^k$-orbits.

Thus $\iota_*:\pi_1(D^{n-k}\times T^k)=\pi_1(M_{\mathrm{reg}}) \rightarrow \pi_1(M)$ is a surjection, where $\iota_*$ is the homomorphism induced by the inclusion. Since the isotropy subgroups $H(\mathbf a_1), \hdots, H(\mathbf a_k)$ span $T^k$ by Corollary \ref{c:KMP}, the determinant of the corresponding matrix is nonzero.

Let $[H_i x]$ be the homotopy class  represented by the circle $H_i x, x\in M_{\mathrm{reg}}$.  Since $H_i$ is the isotropy of a facet of $M/T$, it follows that $H_i x$ bounds a disk in $M$. So $[H_i x]\in \ker(\iota_*)$.

Hence $\pi_1(M)\cong \mathbb{Z}^k/\ker(\iota_*)\subseteq \mathbb{Z}^k/\langle [H_1 x], \hdots, [H_k x]\rangle$. Since we assume $\pi_1(M)$ is trivial,  it is immediate that the determinant of the matrix formed by the $H(\mathbf a_1), \hdots, H(\mathbf a_k)$ is $\pm 1$ and the result follows by Lemma \ref{oh2}.
\end{proof}

 With the hypotheses as in Lemma \ref{ohthesis}, 
Corollary \ref{c:KMP} tells us that the quotient space, $M/T$, has at least $k$ facets and Lemmas \ref{oh2} and \ref{ohthesis} tell us that there is a $(k\times k)$ sub-matrix of weights for $M/T$ that must have determinant $\pm 1$.

\begin{lemma}\label{Smith} Let  $T^k$ act isometrically, effectively on  a closed, simply connected, $n$-dimensional Riemannian manifold, 
whose quotient space $M/T$ is   
\begin{equation*}
P^{n-k}=\prod_{i<r} \Sigma^{n_i} \times \prod_{i\geq r} \Delta^{n_i},
\end{equation*}
and the number of facets of $P^{n-k}$ is equal to $k$. Suppose that all singular isotropies are connected and correspond to points on the boundary and all interior points correspond to principal orbits. Then there exists a weight preserving diffeomorphism $\phi$ of  $P^{n-k}$  
taking the weight vectors, 
$\{ {\bf{a}}_i\}$, of $P^{n-k}$ to $\{ \phi({\bf{a}}_i)\}$, where $\phi({\bf{a}}_i)$ is the unit vector with the value $\pm 1$ in the $i$-th position.
\end{lemma}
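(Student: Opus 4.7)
The plan is to identify $P^{n-k}$ with a disk, apply Corollary \ref{c:KMP} to deduce that the $k$ circle isotropies of the $k$ facets generate $T^k$, and then invoke Part (2) of Proposition \ref{oh} to conclude that the weight matrix $M = M_{(\mathbf{a}_1, \ldots, \mathbf{a}_k)}$ is unimodular. The inverse of this matrix, viewed as an automorphism of $T^k$, will then furnish the required change of weights.

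I would first verify that $P^{n-k}$ is homeomorphic to $D^{n-k}$: each simplex $\Delta^{n_i}$ is a disk, and each suspension $\Sigma^{n_i} = \Sigma\,\Delta^{n_i - 1}$ is also a disk (the suspension of a disk is a disk of one higher dimension), so $P^{n-k}$ is a product of disks and hence itself a disk. Combined with the remaining hypotheses of the lemma---effective action, connected isotropies, singular orbits lying on the boundary, and no exceptional orbits (since every non-singular orbit is principal)---Corollary \ref{c:KMP} applies and yields that $G(\mathbf{a}_1), \ldots, G(\mathbf{a}_k)$ together generate $T^k$. By Part (2) of Proposition \ref{oh}, this forces $\det(M) = \pm 1$, so that $M \in GL_k(\zzz)$ and $M^{-1} \in GL_k(\zzz)$ defines an automorphism of $T^k$ satisfying $M^{-1}\mathbf{a}_i = e_i$.

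Finally, I would precompose the original $T^k$-action on $M^n$ with the automorphism $M^{-1}: T^k \to T^k$ to produce an equivalent action with the same orbit polytope $P^{n-k}$, but where the $i$-th facet now carries circle isotropy $G(e_i)$. Taking $\phi$ to be the identity diffeomorphism $P^{n-k} \to P^{n-k}$ (which manifestly matches facets to facets) realizes the conclusion with $\phi(\mathbf{a}_i) = M^{-1}\mathbf{a}_i = e_i$; the sign ambiguity $\pm e_i$ recorded in the statement is immaterial since $G(\mathbf{a}) = G(-\mathbf{a})$ as circle subgroups.

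The main hurdle I expect is the first step---checking that $P^{n-k}$ is actually a disk so that Corollary \ref{c:KMP} can be invoked. Once this geometric observation is in place, the rest is routine linear algebra over $\zzz$: the hypothesis that the number of facets equals the rank $k$ makes the weight matrix square, and Proposition \ref{oh}(2) immediately delivers the required unimodularity.
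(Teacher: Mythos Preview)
Your proof is correct and follows essentially the same strategy as the paper: both arguments use Corollary~\ref{c:KMP} together with Proposition~\ref{oh}(2) (the paper refers to this as ``as described in Subsection~\ref{s2.1}'') to conclude that the $k\times k$ weight matrix $M_{(\mathbf a_1,\ldots,\mathbf a_k)}$ has determinant $\pm 1$, and then apply an integral change of basis to send the weights to $\pm e_i$. The one notable difference is the linear-algebra step at the end: the paper appeals to the Smith normal form to diagonalize the weight matrix, whereas you simply apply $M^{-1}\in GL_k(\zzz)$ directly. Your route is slightly cleaner, since once $\det M=\pm 1$ there is no need to invoke Smith normal form---the inverse already sends each $\mathbf a_i$ to $e_i$ and is realized by a single automorphism of $T^k$, without the column operations implicit in Smith normal form.
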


\begin{proof}
To prove the lemma, all we need to do is exhibit the isomorphism of the weights, as $\phi|_{P^{n-k}}$ can be taken to be the identity. Since $P^{n-k}$ is homeomorphic to an $(n-k)$-disk,  it follows by Corollary \ref{c:KMP} that the circle isotropy subgroups span $T^k$, and by Lemma \ref{ohthesis} they must generate $T^k$.
We then may assign a weight to each facet, corresponding to the circle isotropy subgroup of $T^{k}$. Each weight may be written as a $k$-vector  and we may group all the weights in a $k\times k$ matrix.  By Lemma \ref{oh2},  the determinant of this matrix is equal to $\pm 1$. 
Since the $k\times k$ matrix of the weights has integer entries and non-zero determinant, using the Smith normal form, it can be made diagonal. 
Once it is in diagonal form,
we obtain a $k\times k$ matrix with $\pm 1$ entries on the diagonal, as desired.  
\end{proof}

\begin{lemma}\label{Free} With the same hypotheses as in Lemma \ref{Smith},  
suppose that the matrix of weights of $P^{n-k}$ is a matrix with $\pm 1$ entries along the diagonal. Then the $T^k$-action has $(2k-n)$ freely acting circles.
\end{lemma}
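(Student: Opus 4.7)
The plan is to exhibit an explicit subtorus $D \subset T^k$ of rank $2k-n$ acting freely on $M^n$, leveraging the product structure of the quotient polytope together with the identity weight matrix. First, a facet count will pin down the combinatorial structure: since $\dim P^{n-k} = n-k$ and each $\Sigma^{n_i}$ contributes $n_i$ facets while each $\Delta^{n_j}$ contributes $n_j + 1$ facets, the total facet count is $(n-k) + s$, where $s$ is the number of simplex factors (those with $i \geq r$). The hypothesis that this equals $k$ forces $s = 2k - n$. Relabeling facets $F_1, \ldots, F_k$ so that for each simplex factor $\Delta^{n_j}$ the associated circles receive a block of $n_j + 1$ consecutive indices and are denoted $T^{(j)}_0, \ldots, T^{(j)}_{n_j}$, the identity weight matrix means $\lambda(F_l) = T_l$ is the $l$-th coordinate circle in $T^k$.

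Next I will identify the isotropy subgroups of the action. Because the singular isotropies are connected (by the hypotheses inherited from Lemma \ref{Smith}) and the weights are $\pm 1$, the isotropy at any point $p \in M^n$ whose image lies in the relative interior of the face $G = F_{i_1} \cap \cdots \cap F_{i_a}$ is exactly the coordinate subtorus $T_I = \prod_{i \in I} T_i$ with $I = \{i_1, \ldots, i_a\}$. The key combinatorial input is that any vertex of a simplex $\Delta^{n_j}$ lies on exactly $n_j$ of the $n_j + 1$ facets of that simplex, so in the product polytope $P^{n-k}$ the index set $I$ of any face always excludes at least one index among the $n_j + 1$ indices attached to each simplex factor; that is, $I \cap \{0, \ldots, n_j\} \subsetneq \{0, \ldots, n_j\}$ for every $j \geq r$.

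Now define for each $j \geq r$ the diagonal circle
\begin{equation*}
D^{(j)} = \bigl\{(z, z, \ldots, z) : z \in S^1\bigr\} \subset T^{(j)}_0 \times \cdots \times T^{(j)}_{n_j},
\end{equation*}
and set $D = \prod_{j \geq r} D^{(j)} \subset T^k$. Because the $(n_j + 1)$-dimensional blocks containing the $D^{(j)}$'s are pairwise disjoint coordinate subtori of $T^k$, $D$ is a subtorus of dimension $s = 2k - n$. To check that $D$ acts freely it suffices to verify $D \cap T_I = \{e\}$ for every isotropy index set $I$. Decomposing by factor,
\begin{equation*}
D \cap T_I = \prod_{j \geq r} \bigl(D^{(j)} \cap T_{I \cap \{0,\ldots,n_j\}}\bigr),
\end{equation*}
and a direct lattice computation in $\mathbb{R}^{n_j+1}/\mathbb{Z}^{n_j+1}$ shows the following: if $t(1,1,\ldots,1)$ lies in the $\mathbb{R}$-span of a proper subset of standard basis vectors modulo $\mathbb{Z}^{n_j+1}$, then the coordinate indexed by any omitted basis vector forces $t \in \mathbb{Z}$, whence $t(1, \ldots, 1) \equiv 0$ in the torus. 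By the preceding paragraph each $I \cap \{0, \ldots, n_j\}$ is a proper subset of $\{0,\ldots,n_j\}$, so every factor intersection is trivial, and hence $D$ is a freely acting subtorus of rank $2k - n$.

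The only delicate ingredient is the combinatorial fact that in the product polytope no face contains all $n_j + 1$ facets of any single simplex factor; once this is in place, the verification that the diagonal circle is transverse to every proper coordinate subtorus reduces to an elementary lattice argument, and the product of these diagonal circles is exactly the desired freely acting $T^{2k-n}$.
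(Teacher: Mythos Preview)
Your proof is correct and shares the same core idea as the paper's: for each of the $2k-n$ simplex factors $\Delta^{n_j}$ of $P^{n-k}$, take the diagonal circle in the corresponding $(n_j+1)$-dimensional coordinate block of $T^k$. The paper reaches this same diagonal via a more roundabout route, phrasing the construction in terms of ``pairs of opposing faces'' (a facet $\Delta^{n_j-1}_i$ and its opposite vertex $v_i$) whose isotropy subgroups together generate the block $T^{n_j+1}$, and then running an induction on the number of simplex factors to handle the product. Your direct definition of the $D^{(j)}$ together with the lattice check---using the combinatorial fact that every nonempty face of $P^{n-k}$ omits at least one facet from each simplex factor---is cleaner and bypasses the induction entirely; the paper's opposing-faces picture is perhaps more geometric in flavor but lands on the identical subtorus.
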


\begin{proof}

By assumption the matrix of weights of $P^{n-k}$ is a matrix with $\pm 1$ entries along the diagonal. Hence all circle isotropy subgroups are mutually orthogonal.
In order to show that the $T^{k}$-action has $(2k-n)$ freely acting circles, we claim that it suffices to find $(2k-n)$ pairs of opposing facets on the polytope, that is $(n-k-1)$-dimensional faces that do not intersect in any lower-dimensional faces. For each such pair of opposing facets, we consider the diagonal circle in the subgroup of $T^k$ generated by the corresponding isotropy subgroups of each opposing facet, that is, we consider the diagonal circle in the corresponding $T^2$ generated by the isotropies of each facet that intersects each of these isotropies trivially. Since the facets are opposing, they will not intersect in any lower-dimensional face and hence the diagonal circle in this subgroup 
will intersect all isotropy subgroups trivially and corresponds to a circle subgroup of $T^k$ that acts freely.

Recall that  $\Sigma^{n_i}$ has $n_i$ facets and $\Delta^{n_i}$ has $n_i + 1$ facets and  that the number of facets in $P^{n-k}=\prod_{i<r} \Sigma^{n_i} \times \prod_{i\geq r}^s \Delta^{n_i}$ 
equals the sum of the facets in each  of the $\Sigma^{n_i}$ and  $\Delta^{n_i}$. That is, if we let $f(P^{n-k})$ denote the total number of facets, then $$f(P^{n-k})=\sum_{i<r} (n_i)+\sum_{i\geq r}^s (n_i+1)=\dim(P^{n-k}) +s-r+1,$$ where $s-r+1\geq 0$ is equal to the total number of all simplices in $P^{n-k}$.  
Now, since $P^{n-k}$ has $k$ facets by assumption, this means that $P^{n-k}$ contains the 
product of $2k-n$ simplices.

It is clear that $\Sigma^{n_i}$, since it is the lunar suspension of a $\Delta^{n_i-1}$, has no pairs of opposing facets that do not intersect in a lower dimensional face. 
Thus, in order to find the $(2k-n)$ pairs of opposing facets, it suffices to show that for any product of $(2k-n)$ of the $\Delta^{n_i}$, that we can find $(2k-n)$ opposing facets. The product of these pairs of opposing facets with the remaining product of $\Sigma^{n_i}$s in $P^{n-k}$ will then form 
the desired set of $(2k-n)$ pairs of opposing facets.

 We will use barycentric coordinates for the $n$-simplex, $\Delta^n = \sum_{i=0}^{n} s_i \,v_i$ with vertices $v_0,\dots,v_n$ where $(s_0,\dots s_n) \in \rrr^{n+1},\,\sum_{i=0}^{n} s_i = 1$ and $s_i \ge 0$ for $i=0,\dots n$.   We define  the set $$\Delta^{n-1}_{i} = \{(s_0,\dots,s_{i-1},0,s_i,\dots s_n)| \sum_{i=0}^{n} s_i = 1,\, s_i \ge 0 \,\, \text{for} \,\,i=0,\dots n\}$$ corresponding to the $(n-1)$-simplex to be {\it opposite} the vertex $$v_i = (0,\dots 0,1,0,\dots 0)$$ with $1$ in the $i$-th coordinate.  

We will denote an opposing pair of faces  in $P^n$ by  $(F_1, F_2)$, where we allow the dimension of the $F_i, i=1, 2$, to vary from $0$ to $n-1$.
 Consider the following two canonical examples.
 
 \begin{example} \label{ex1}
Consider the simplex, $\Delta^{k}$, with $k+1$ facets,  arising as the quotient of a $T^{k+1}$-action on $S^{2k+1}$.  
Consider the pair of opposing facets given by  
\bdm (\Delta^{k-1}_{i}, v_i).
\edm
Let $T_i^1$ be the isotropy subgroup corresponding to  $\Delta^{k-1}_{i}$  and let $T_i^{k}$ be the isotropy subgroup corresponding to $v_i$.
Then it is clear that $T_i^1$ and $T_i^{k}$ intersect trivially in $T^{k+1}$. The diagonal circle in $T^{k+1}$, which is generated by  $T_i^1$ and  $T_i^{k}$, intersects all isotropy groups trivially and 
hence acts freely.

\end{example}

 \begin{example}\label{ex2}

Consider the simplex, $\Delta^{k-l}\times \Delta^l$, with $k+2$ facets,  arising as the quotient of a $T^{k+2}$-action on $S^{2k-2l+1}\times S^{2l+1}$.   
Consider the two pairs of opposing facets given by  
\bdm (\Delta^{k-l-1}_{i}\times \Delta^l, v_i\times \Delta^l) \textrm{ and  } (\Delta^{k-l}\times \Delta^{l-1}_j,  \Delta^{k-l}\times v_j).
\edm
Let $T_i^1$ be the isotropy subgroup corresponding to  $\Delta^{k-l-1}_{i}\times \Delta^l$ and let $T_i^{k-l}$ be the isotropy subgroup corresponding to $v_i\times \Delta^l$ and $T_j^1$ be the isotropy subgroup corresponding to  $\Delta^{k-l}\times \Delta^{l-1}_j$ and let $T_j^{l}$ be the isotropy subgroup corresponding to $\Delta^{k-l}\times v_j$.
Then it is clear that $T_i^1$ and $T_i^{k-l}$ intersect trivially in $T^{k+2}$ and so do $T_j^{1}$ and $T_j^{l}$. The diagonal circles in $T^{k-l+1}=\langle T_i^1, T_i^{k-l}\rangle$ and in  $T^{l+1}=\langle T_j^1, T_j^{l}\rangle$ intersect all isotropy groups trivially and 
hence act freely.

\end{example}

For the sake of simplicity of notation, we will prove only the case when $P^{n-k}= \prod_{i=1}^{2k-n} \Delta^{n_i}$.
We will proceed by induction on the number of simplices contained in the product.  The base case is covered by Example \ref{ex1}.  
Let $ \prod^l_{\Delta} = \prod_{i=1}^l \Delta^{n_i}$ with $\sum_{i=1}^l n_i +l=k$ facets arising as the quotient of a $T^k$-action on $M^n$,
with free rank equal to $2k-n=l$, and assume by the induction hypothesis that on any subproduct $ \prod^{l-1}_{\Delta}\subset \prod^l_{\Delta}$ we can find $(l-1)$ pairs of opposing facets. 

Without loss of generality, we let $ \prod^{l}_{\Delta} =  \prod^{l-1}_{\Delta} \times \Delta^{n_{l}}$.
We denote $\Delta^{n_1} \times \dots \times \widehat{ \Delta^{n_j}} \times  \Delta^{n_l}$ by $ \prod^{l,j}_{\Delta}$.   
Then we get $l$ freely acting circles from opposing facets given by $\{ \prod^{l,j}_{\Delta}    \times \Delta^{n_{j}-1}_i, \prod^{l,j}_{\Delta} \times v_i\} $
 for $j=1, \dots, l-1$  and one additional one by the construction in Example \ref{ex2} with the pair of opposing facets given by 
$\{\prod^{l-1}_{\Delta} \times \Delta^{n_{l}-1}_i,  \prod^{l-1}_{\Delta} \times v_i\}\,.$  Hence we get $l$ pairs of opposing facets, as desired.
\end{proof}

Recall that given an $n$-dimensional homology polytope $P$, we associate to it  the moment angle manifold, $\mathcal{Z}_{P}$  (see Definition \ref{moment}). Note that since the $\Sigma^{n_i}$ are lunar suspensions of simplices $\Delta^{n_i-1}$, we may define the barycenter of any $\Sigma^{n_i}$ to be  the corresponding barycenter of the $\Delta^{n_i-1}$ over which we suspend to obtain $\Sigma^{n_i}$. The {\em barycenter of $P$} is then defined to be the
 $n$-tuple of the barycenters of each  $\Sigma^{n_i}$ and $\Delta^{n_j}$ in the product.
It is then clear that  a homology polytope of the form $P=\prod_{i<r} \Sigma^{n_i} \times \prod_{i\geq r} \Delta^{n_i}$ admits a conical decomposition by taking the barycenter of $P$ to be the cone point for each of the cones over the facets. Further, by Remark \ref{ZPlstd}, we see that the 
 $T^k$ action on $\mathcal{Z}_{P}$ is locally standard.  
 Hence, 
the following lemma now follows by a direct application of Theorem \ref{t:ecst}.

\begin{lemma}\label{Exist} Let $\phi$ be the weight preserving diffeomorphism of Lemma \ref{Smith} and $\mathcal{Z}_{\phi(P^{n-k})}$ the moment angle manifold corresponding to $\phi(P^{n-k})$.  
The $T^k$ action on $\mathcal{Z}_{\phi(P^{n-k})}$  admits a cross-section.
\end{lemma}

The existence of a  locally standard action implies that  all singular isotropies are connected and correspond to points on the boundary and all interior points correspond to principal orbits. So we may apply  Lemmas \ref{Smith},  \ref{Exist}, and \ref{Free}, in combination with the Cross-Sectioning Theorem \ref{t:ecst} and the Equivariant Classification Theorem \ref{t:ect} to obtain the following theorem.

\begin{theorem}\label{c:wpd} 
Let  $T^k$ act isometrically, effectively on  a closed, simply connected, $n$-dimensional Riemannian manifold, 
whose quotient space $M/T$ is   
\begin{equation*}
P^{n-k}=\prod_{i<r} \Sigma^{n_i} \times \prod_{i\geq r} \Delta^{n_i},
\end{equation*}
and the number of facets of $P^{n-k}$ is equal to $k$. Suppose further that the $T^k$ action is locally standard. 

Then there exists an equivariant diffeomorphism between $M$ and $\bar{M}$, where $\bar{M}=\mathcal{Z}_{\phi(P^{n-k})}$ admits a freely acting torus of rank $2k-n$ and $\phi$ is the weight preserving diffeomorphism of Lemma \ref{Smith}.
\end{theorem}


\section{Almost non-negative curvature and locally standard actions}\label{s4}

In this section, using a generalization of work of  \cite{Spi}, we extend a result of \cite{Wie} that allows us to determine 
 when a torus manifold with non-negatively curved quotient space is locally standard.

In fact, one can see from the proof of  Theorem \ref{Spindeler},  that in order to obtain the disk bundle decomposition,  we only need assume that $M$ is closed and the quotient $M/G$ is a non-negatively curved Alexandrov space. That is,  we can reformulate Theorem \ref{Spindeler}, as follows.

\begin{theorem}\label{newSpindeler}  Assume that $G$ acts fixed point homogeneously  on a closed  Riemannian manifold $M$ such that $M/G$ is a non-negatively curved Alexandrov space. Let $F$ be a fixed point component of maximal dimension. Then there exists a smooth submanifold $N$ of $M$, without boundary, such that $M$ is diffeomorphic to the normal disk bundles $D(F)$ and $D(N)$ of $F$ and $N$ glued together along their common boundaries;
\bdm
M  = D(F) \cup_{\partial} D(N).
\edm
Further, $N$ is $G$-invariant and all points of $M\setminus \{F\cup N\}$ belong to principal $G$-orbits. 

\end{theorem}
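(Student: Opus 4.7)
The plan is to adapt Spindeler's original argument for Theorem \ref{Spindeler}, observing that his proof uses the non-negative curvature of $M$ only in its consequences for the quotient $M^* := M/G$ as an Alexandrov space. Since we are now assuming directly that $M^*$ has non-negative curvature, the same soul-like construction on the quotient should go through, and the $G$-equivariance will allow us to lift it back to $M$.

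First I would pass to the quotient $M^*$. Because $G$ acts fixed point homogeneously with $F$ a fixed point component of maximal dimension, $F^* := F/G = F$ sits as a connected codimension-one stratum of the boundary of $M^*$, and $\dim M^* = \dim F + 1$. The distance function $\dist_{F^*} : M^* \to \mathbb{R}$ is then concave, because $F^*$ is an extremal subset of the non-negatively curved Alexandrov space $M^*$ (this is where the hypothesis $\curv(M^*)\geq 0$ is used in place of $\sec M \geq 0$). Let $c := \max \dist_{F^*}$ and set $N^* := \dist_{F^*}^{-1}(c)$. By concavity, $N^*$ is an extremal subset of $M^*$, and the sub-level sets $\dist_{F^*}^{-1}([0,t])$ for $0<t<c$ are topological disk bundles over $F^*$ via the gradient flow of $\dist_{F^*}$.

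Next I would lift this picture to $M$. Set $N := \pi^{-1}(N^*)$, where $\pi : M \to M^*$ is the orbit map. The key point is that the gradient flow of $\dist_{F^*}$ on $M^*$ lifts canonically to an equivariant, gradient-like flow on $M$ (for instance, via the horizontal lift with respect to the Riemannian submersion structure on the principal part, extended across singular strata using the slice theorem), giving a tubular neighborhood structure of $F$ and of $N$ respectively. Standard smoothing of the gradient-like flow, combined with the equivariant tubular neighborhood theorem applied to both $F$ and $N$, shows that $N$ is a smooth, closed, $G$-invariant submanifold without boundary, that $M\setminus N$ deformation retracts equivariantly onto $F$, and that the two open disk bundles $D(F)$ and $D(N)$ meet along their common boundary, yielding the decomposition $M = D(F) \cup_{\partial} D(N)$.

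The $G$-invariance of $N$ and the fact that $N$ contains all non-fixed singularities away from $F$ follow directly from their counterparts in $M^*$: $N^*$ is $G$-invariant as a distance-defined set, and any singular orbit whose image lies in the interior of $\pi(D(F))$ would be moved by the gradient flow toward $F$, contradicting the structure of the slice representation unless it is already in $F$. The main obstacle is to ensure that the Alexandrov-geometric gradient flow on $M^*$ lifts to a genuinely smooth equivariant flow on $M$, since we have lost pointwise curvature information upstairs; this is where one needs to carefully combine Petrunin's gradient flow machinery on Alexandrov spaces with the smoothness of the $G$-action and the slice theorem, rather than relying on the ambient Riemannian geometry of $M$ as in \cite{Spi}.
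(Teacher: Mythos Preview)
Your proposal is correct and follows the same approach as the paper, which in fact gives no independent proof at all but simply remarks that Spindeler's argument in \cite{Spi} uses the non-negative curvature of $M$ only through the fact that $M/G$ is a non-negatively curved Alexandrov space, and then restates the theorem under that hypothesis. One small comment: the obstacle you flag in your last paragraph is largely illusory, since $M$ is still a complete Riemannian manifold and the $G$-invariant function $\dist_F$ on $M$ (the pullback of $\dist_{F^*}$) retains all the ambient smoothness used in \cite{Spi}; only the concavity on the quotient required the curvature bound, and that is now assumed directly, so Spindeler's lifting and smoothing go through verbatim.
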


One can also see from the proof of Proposition \ref{Spindelerprop},  that its hypotheses can be weakened   to assume only that $M$ is closed and the quotient $M/G$  is a non-negatively curved Alexandrov space as follows.
  
  \begin{proposition}  \label{newSpindelerTorus}  Assume that $M$ is a closed, simply connected 
torus manifold and let $M, N$ and $F$ be as in Theorem \ref{newSpindeler}. Then $N$ has codimension greater than or equal to $2$ and $F$ is simply connected.
  \end{proposition}

Let $G$ act on $M^n$, a closed Riemannian manifold.  Then the following lemma shows that $G$-invariant components of the fixed point set of any subgroup $H\subset G$ descend to Alexandrov spaces with the same lower curvature bound as the orbit space $M/G$. In particular, this result gives a refinement of work of Grove, Moreno, and Petersen \cite{GMP}, showing that $\partial(M/G)$ is an  Alexandrov space with the same lower curvature bound as the orbit space $M/G$.

 \begin{lemma}\label{condition} Let $M$ be a closed Riemannian manifold with an isometric $G$-action.  For any  $G$-invariant component $F\subset M^{H}$, $H\subset G$, the quotient space, $F/G$, is an Alexandrov space with the same lower curvature bound as $M/G$.  
\end{lemma}
\begin{proof} 
Since M is a closed Riemannian manifold, it has a positive injectivity radius, which we denote by $\epsilon >0$.  The fixed point set component $F$ is a closed, totally geodesic submanifold. Being totally geodesic, it is $\epsilon$-convex, that is, any geodesic of length less than $\epsilon$ between points $p, q\in F$ lies entirely in $F$ (see Gromoll and Grove \cite{GG}). Moreover, by the assumption on the injectivity radius, this geodesic will be unique. Since $F$ is a closed submanifold of $M$, it follows that $F/G$ is a compact Alexandrov space, possibly with boundary.

Now let $\bar{p} \,,\,\bar{q} \in F/G$  such that $\bar{q}\in B_{\epsilon}(\bar{p})$. Let $\bar{\gamma}$ be a shortest geodesic between $\bar{p}$ and $\bar{q}$ in $M/G$. Then $\bar{\gamma}$ lifts to a horizontal geodesic $\gamma$, between $p$ and $q$, the inverse images of $\bar{p}$ and $\bar{q}$. 
But since $\bar{\gamma}$ and hence $\gamma$ are of length less than $\epsilon$, $\gamma$ is entirely contained in $F$. But this means that $\bar{\gamma}$ is entirely contained in $F/G$.
Thus $F/G$ is $\epsilon$-convex in $M/G$. Since a subset, $Y$, of an Alexandrov space, $X$, that is itself an Alexandrov space  and is $\epsilon$-convex will satisfy  local triangle comparison, it follows that $Y$ will have the same lower curvature bound as $X$. Therefore  $F/G$ has the same lower curvature bound as $M/G$.
\end{proof}

Since fixed point set components of subtori are invariant under the full torus action, the proof of the following generalization of  Lemma 6.3 of \cite{Wie}, which we leave to the reader, now follows by Lemma \ref{condition} and the same arguments as in the proof of Lemma 6.3 in \cite{Wie}, using induction on the dimension of $M$.

\begin{theorem}\label{t:locallystandard} Let $M$ be a closed, simply connected, torus manifold  and assume that $M/T$ is a non-negatively curved Alexandrov space. Then $M$ is locally standard and $M/T$ and all its 
faces are diffeomorphic (after smoothing the corners) to standard disks. In particular, $H^{\text{odd}}(M)=0$.
\end{theorem}

One can also generalize Theorem \ref{t:torus} for the class of manifolds considered in Theorem \ref{t:locallystandard}.  The only element in the proof of Theorem \ref{t:torus}, that we have not already generalized to this class of manifolds and which is curvature dependent is Lemma 4.2 of \cite{Wie}. 
However,  it is clear that Lemma 4.2 of \cite{Wie} does hold, since we require the quotient space to be non-negatively curved for this class of manifolds, and thus, one can still apply Lemma 4.1 in \cite{GGS2} to obtain the result. Thus, combining 
our Theorems \ref{newSpindeler} and \ref{t:locallystandard},  along with Lemmas 6.4, 6.7, 6.8 in \cite{Wie} and an argument from the proof of Theorem 4.1 in \cite{Wie}, yields the following theorem.

\begin{theorem}\label{generaltorus} Let $M$ be a closed, simply connected, torus manifold and assume that $M/T$ is a non-negatively curved Alexandrov space. Then $M$ is equivariantly diffeomorphic to the quotient of a product of spheres of dimensions greater than or equal to three by a free, linear torus action.
\end{theorem}

  \begin{remark} Using Theorem B of Searle and Wilhelm \cite{SW}, which allows one to lift a metric of almost non-negative curvature to a $G$-manifold $M$, provided $M/G$ is almost non-negatively curved,  we note that $M$ as above admits a $G$-invariant family of metrics of almost non-negative curvature. So Theorem \ref{newSpindeler} and Proposition \ref{newSpindelerTorus},  hold for the class of $G$-invariant almost non-negatively curved manifolds with non-negatively curved quotient spaces. Moreover,  Theorems \ref{t:locallystandard} and \ref{generaltorus} also hold 
 for the class of $G$-invariant almost non-negatively curved manifolds with non-negatively curved quotient spaces.
  
  \end{remark}


\section{A General lower bound for the free rank}\label{generallowerbound} In order to establish the lower bound for the free rank, we first need the following proposition, which establishes the existence of a $T^k$ fixed point when the torus action has no circle subgroup acting almost freely.

\begin{proposition}\label{l:fixed_point} Let $T^k$ act isometrically  on $X^n$, a closed $n$-dimensional Alexandrov space with a lower curvature bound. Suppose that no circle subgroup acts almost freely, or equivalently that every element $t\in T^k$ has a fixed point. Then $T^k$ has a fixed point.
\end{proposition}

The proof uses the same ideas as  the proof of Lemma 5.5 of Harvey and Searle \cite{HS}. However, since it is short, we include it here  for the sake of completeness.
\begin{proof}
Consider a dense $1$-parameter subgroup of $T^k$, and within it an infinite cyclic
subgroup. By assumption, every element $t\in T^k$ has a fixed point. 
Thus the cyclic subgroup fixes a point. As we move the generator
of the cyclic subgroup towards the identity, we generate a sequence of fixed points in
$X$, and any limit point of that sequence will be fixed by the torus.
\end{proof}

The following proposition establishes a lower bound for the free rank of a general isometric torus action on an Alexandrov space with an arbitrary lower curvature bound.

\begin{proposition}\label{p:free rank} Let $T^{k}$ act isometrically and effectively on $X^{n}$, a closed Alexandrov space  with a lower curvature bound, and $k \geq \lfloor (n+1)/2\rfloor$.
Then the free rank of the $T^{k}$-action is greater than or equal to $2k - n$.
\end{proposition}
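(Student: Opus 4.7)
My plan is to prove the proposition by establishing a pointwise bound on the dimension of the isotropy subgroups, and then extracting an almost-free $(2k-n)$-dimensional subtorus by a transversality argument in $\mathrm{Lie}(T^k)$. The main external input is the Harvey--Searle Maximal Symmetry Rank Theorem for closed Alexandrov spaces of curvature $\geq 1$, recalled earlier in the excerpt.

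First I would fix any point $x \in X$, let $H = G_x$ be its isotropy subgroup, and set $d = \dim H$. The orbit $T^k\cdot x$ has dimension $k - d$, so the space of normal directions $\Sigma_x^\perp$ is a closed Alexandrov space of dimension $n - k + d - 1$ with curvature $\geq 1$. The torus $H$ acts isometrically and almost effectively on $\Sigma_x^\perp$: if a positive-dimensional subtorus of $H$ acted trivially on $\Sigma_x^\perp$, it would fix a neighborhood of $x$ in $X$ and hence, by connectedness of $X$, all of $X$, contradicting the effectiveness hypothesis. The Harvey--Searle theorem, applied (after modding out the finite ineffective kernel) to the $H$-action on $\Sigma_x^\perp$, then gives
\[
d \;\leq\; \Big\lfloor \tfrac{n - k + d}{2} \Big\rfloor \;\leq\; \tfrac{n - k + d}{2},
\]
which rearranges to $d \leq n - k$. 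Thus every isotropy Lie algebra $\mathfrak{g}_x \subset \mathrm{Lie}(T^k)$ has dimension at most $n - k$.

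Since isometric compact Lie group actions on closed Alexandrov spaces admit finitely many orbit types, the collection $\{\,\mathfrak{g}_x : x \in X\,\}$ is a finite family of rational subspaces of $\mathrm{Lie}(T^k)$, each of dimension at most $n - k$. For each such $\mathfrak{g}_x$, the set of $(2k - n)$-planes $V \subset \mathrm{Lie}(T^k)$ with $V \cap \mathfrak{g}_x \neq 0$ is a proper Schubert subvariety of the Grassmannian $\mathrm{Gr}(2k - n, k)$, and a finite union of proper subvarieties cannot contain the dense set of rational $(2k-n)$-planes. Choosing a rational $(2k - n)$-plane $V$ transverse to every $\mathfrak{g}_x$ and letting $T^{2k - n} \subset T^k$ be the corresponding subtorus, the intersection $T^{2k - n} \cap G_x$ has trivial Lie algebra $V \cap \mathfrak{g}_x = 0$ and is therefore finite for every $x$. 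This is precisely the assertion that $T^{2k - n}$ acts almost freely on $X$, so the free rank of the $T^k$-action is at least $2k - n$.

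The main technical obstacle lies in the first step: one must carefully verify that $\Sigma_x^\perp$ is a closed Alexandrov $(n - k + d - 1)$-space of curvature $\geq 1$ and that the slice representation of $H$ on $\Sigma_x^\perp$ is almost effective. Both facts are standard for isometric torus actions on Alexandrov spaces, but must be handled with care. Once the bound $\dim G_x \leq n - k$ is in hand, the remainder of the argument is routine linear algebra combined with the finiteness of orbit types; the hypothesis $k \geq \lfloor (n+1)/2 \rfloor$ serves only to ensure $2k - n \geq 0$, so that the conclusion is nontrivial.
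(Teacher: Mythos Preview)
Your argument is correct but takes a genuinely different route from the paper's. The paper proceeds by contradiction: letting $T^l$ be the maximal almost-free subtorus and supposing $l<2k-n$, it passes to the quotient $X^{n-l}=X/T^l$ with its induced $T^{k-l}$-action, in which no circle subgroup acts almost freely; an auxiliary fixed-point lemma (Proposition~\ref{l:fixed_point}) then produces a point $\bar p$ fixed by all of $T^{k-l}$, and the Harvey--Searle bound applied to the $T^{k-l}$-action on the $(n-l-1)$-dimensional space of directions $\Sigma_{\bar p}$ yields $k-l\le\lfloor (n-l)/2\rfloor$, contradicting $l<2k-n$. You instead argue directly and pointwise: Harvey--Searle on the normal slice at \emph{each} $x$ gives the uniform bound $\dim G_x\le n-k$, and then a transversality argument in $\mathrm{Lie}(T^k)$, using finiteness of orbit types, produces a rational $(2k-n)$-plane meeting every isotropy subalgebra trivially. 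Your approach avoids both the passage to the quotient and the separate fixed-point proposition, and it yields the sharper intermediate statement $\dim G_x\le n-k$ for all $x$ as a byproduct; the paper's approach, in turn, needs neither the finiteness of orbit types in the Alexandrov setting nor the Grassmannian argument, and is more self-contained once Proposition~\ref{l:fixed_point} is established. Both routes ultimately rest on the same Harvey--Searle input, so the difference is one of packaging rather than depth.
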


\begin{proof}
 Let $T^l \subset T^{k}$ be the largest subtorus that acts almost freely, and suppose that  $l<  2k - n$. Since 
 $l < 2k - n$, it follows that $$k-l >  \frac{n-l}{2} \geq  \lfloor \frac{n-l}{2}\rfloor.$$ 

Now  $T^{k-l}\cong T^k/T^l$ acts on $X^{n-l} = X^{n}/T^{l}$, a closed Alexandrov space with the same lower curvature bound, and no circle subgroup of $T^{k-l}$ acts almost freely on $X^{n-l}$.   By Lemma \ref{l:fixed_point}, there is a point $\bar{p}\in X^{n-l}$ fixed by $T^{k-l}$.
 So, 
there is an action of $T^{k-l}$ on $\Sigma_{\bar{p}}$, the unit normal space of directions to this orbit,  which is itself a closed Alexandrov space of dimension $n-l-1$ with curvature bounded below by $1$.  
The  Maximal Symmetry Rank Theorem  for positively curved Alexandrov spaces  in \cite{HS} states that for a rank $j$ torus-action on $X^m$,  as is also true in the manifold case, $j\leq \lfloor (m+1)/2\rfloor$.  
We then have
$$k-l\leq \lfloor  \frac{n-l}{2}\rfloor,$$
 a contradiction. 
Hence the bound holds.
\end{proof}

Combining Propositions  \ref{l:fixed_point} and \ref{p:free rank}  with Corollary II.6.3 of  \cite{Br} yields the following corollary.
\begin{corollary}\label{l:torus} Let $T^k$ act isometrically and effectively on $M^n$, a closed, simply connected, non-negatively curved Riemannian manifold, with $k \geq \lfloor (n+1)/2\rfloor$.
Suppose that the free rank of the action is less than or
equal to  $2k-n$. Then, the following hold:
\begin{enumerate}
\item If the free dimension  is equal to the free rank, then the quotient space, $M^{2n-2k}=M^n/T^{2k-n}$, admits a $T^{n-k}$-action and is a closed, simply connected, 
torus manifold of non-negative sectional curvature.
\item If the free dimension  is not equal to the free rank, then the quotient space, $X^{2n-2k}=M^n/T^{2k-n}$, admits a $T^{n-k}$-action and is a closed, simply connected, non-negatively curved (in the Alexandrov sense) torus orbifold.
\end{enumerate}
\end{corollary}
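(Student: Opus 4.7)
The plan is a direct application of the three cited results. By hypothesis there is a subtorus $T^{2k-n} \subset T^k$ acting almost freely on $M^n$, which in case (1) acts freely. Setting $X = M^n/T^{2k-n}$, this is a smooth closed $(2n-2k)$-manifold in case (1) and a smooth closed $(2n-2k)$-orbifold in case (2). The residual torus $T^{n-k} = T^k/T^{2k-n}$ acts effectively on $X$ (any element in the ineffective kernel would have to come from $T^{2k-n}$, but then it would act trivially on fibers and contradict either the almost-freeness of $T^{2k-n}$ on $M^n$ or the effectiveness of $T^k$), and the dimensional relation $\dim X = 2\dim T^{n-k}$ matches the setup for a torus manifold or torus orbifold of dimension $2(n-k)$.

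Next, I would transfer simple-connectedness and non-negative curvature to $X$. Simple-connectedness follows from Corollary 6.3 of Ch. II of Bredon, applied to the compact connected torus $T^{2k-n}$ acting on the simply-connected manifold $M^n$: the orbit space of a simply-connected manifold under a compact connected group action is simply-connected. For curvature, in case (1) the free isometric action yields a Riemannian submersion $M^n \to X$ and non-negative sectional curvature descends by O'Neill's formula; in case (2) the almost free isometric action still makes $X$ a non-negatively curved Alexandrov space carrying a natural orbifold structure, since Alexandrov quotients by isometric compact group actions inherit the lower curvature bound.

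Finally, I would produce the $T^{n-k}$-fixed point on $X$ required of a torus manifold or orbifold. If any circle subgroup $S^1 \subset T^{n-k}$ acted almost freely on $X$, its preimage in $T^k$ would contain an almost free subtorus of rank $2k-n+1$ acting on $M^n$, contradicting the hypothesis that the free rank equals $2k-n$. Hence no circle subgroup of $T^{n-k}$ acts almost freely on $X$, and Proposition \ref{l:fixed_point} then furnishes a $T^{n-k}$-fixed point on $X$, completing the identification. Proposition \ref{p:free rank} is invoked implicitly to guarantee that the free rank hypothesis is consistent with the range $k \geq \lfloor (n+1)/2 \rfloor$ needed for the dimensional computations. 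The main subtlety lies in case (2), where one must ensure the orbifold structure is compatible with the Alexandrov structure and with the topological simple-connectedness inherited from Bredon, and that the resulting object genuinely fits the notion of torus orbifold introduced in the preliminaries.
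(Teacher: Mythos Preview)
Your proof is correct and follows exactly the approach the paper indicates: the paper states only that the corollary follows directly from Proposition~\ref{l:fixed_point}, Proposition~\ref{p:free rank}, and Corollary~6.3 of Ch.~II of Bredon, and you have correctly spelled out how these three ingredients combine. The only minor omission is orientability of the quotient (required in the definitions of torus manifold and torus orbifold), which is immediate from the simple-connectedness you already established.
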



\section{Proof of Theorem \ref{t:thma}}\label{s:6}

 Recall that in Theorem \ref{t:thma} we have an isometric and effective $T^k$ action on $M^n$, a closed, simply connected, non-negatively curved Riemannian manifold. Moreover, we assume that the $T^k$ action is isotropy maximal, that is, the free rank  is equal to $2k-n$. We  show in Proposition \ref{lstd} that in the presence of non-negative curvature this implies the torus action is locally standard, which is key in the proof of Theorem \ref{t:thma}.   
  
To prove the equivariant classification we split the proof into two cases: Case (1), where the free dimension equals the free rank,  and Case (2), where the subtorus corresponding to the free rank acts almost freely, but not freely. In both cases 
there are two further subcases: Subcase (a), where the rank of the action is equal to the number of facets of the orbit space $M^{n}/T^{k}$ and Subcase (b), where the rank of the action is strictly less than the number of facets of the orbit space $M^{n}/T^{k}$.

\subsection{Isotropy Maximal Torus Actions}
Before we begin with the proof of Theorem \ref{t:thma}, we show that isotropy-maximal torus actions behave nicely in the presence of non-negative sectional curvature.
\begin{proposition}\label{lstd} Let $T^k$ act isometrically, effectively and isotropy-maximally on $M^{n}$, a closed,  simply connected, non-negatively curved Riemannian manifold. Then the following are true:
\begin{enumerate}
\item The torus action on $M$ is locally standard,
in particular,  $M/T$ is a nice manifold with corners, such that the isotropy groups are constant on all open faces of $M/T$; and
\item All closed faces of $M/T$ are diffeomorphic to standard discs, after smoothing the corners.
\end{enumerate}
\end{proposition}
\begin{proof}
The proof follows along the same lines as the proof of Lemma 6.3 in \cite{Wie} and is 
 by induction on the dimension of the orbit space.
For simplicity of notation, let $T$ denote the torus $T^k$ throughout.
Let $T(x)$ be a minimal orbit in $M$. Then there is a codimension-two submanifold
$F\subset M$ fixed by a circle subgroup $C$ of $T$ containing $T(x)$. Note that the action of $T/C$ on $F$ is isotropy-maximal.
By Theorem \ref{Spindeler}, $M$ admits  an equivariant disk bundle decomposition as  $$M =D(F)\cup_E D(N),$$
and by Lemma 3.29 of \cite{Spi}, we have that $\codim(N) \geq 2$.  

We split the proof into two cases: Case ($1$), where  $\codim(N) \geq 3$, and Case ($2$), where $\codim(N)= 2$.
We treat Case ($1$) first.

In the case where $\codim(N) \geq 3$, as in the proof of Part (2) of Proposition 6.2 of \cite{GGS2}, we see that $F$ must be simply-connected as follows. Let $\gamma$ be a loop in $F$. Since $M$ is simply connected, $\gamma$ bounds a $2$-disk, $D^2$.
Since $\codim(N) \geq 3$, by transversality, we can perturb $D^2$ so as to lie in the complement of $D(N)$, while keeping $\gamma=\partial D^2$ in $F$.  Since $D^2$ lies in $D(F)$, it deformation retracts onto $F$ and the conclusion follows.

We can now apply the induction hypothesis to $F$ to get that the action of $T/C$ on $F$ is locally standard and the faces of $F/T$ are standard discs.
Observe that $E$ is a $C$-bundle over $F$. Let $x'\in E$ such that $\pi(x')=x$. The orbit $T(x')$ in $E$  is an orbit of type $T/T'$ where $T'$ is some codimension-one subtorus of $T_x$ such that $T_x = C \times T'$.
Since the $T'$-action on $F$ is locally standard and $T(x)$ is a minimal orbit, there is a neighborhood of $T(x')$ which is equivariantly diffeomorphic to
$T/T' \times W \times \rrr$,
where $W$ is a faithful $T'$-representation of dimension $2 \dim(T')$ and $T$ acts trivially on $\rrr$. Note that this $\rrr$-factor is normal to $E$.
Let $T(y)$ be the projection of the orbit $T(x')$ to $N$. Since $\dim(T(x')) = \dim(T(x)) + 1$ there are two cases:
\begin{enumerate}
\item[ ($1.a$)] $T(y)$ is a minimal orbit, that is, $\dim(T(y)) = \dim(T(x))$; or
\item[($1.b$)] $T(y)$ is not a minimal orbit, that is, $\dim(T(y))=\dim(T(x))+1$.
\end{enumerate}
Assume first that we are in Case ($1.a$). Then $T(y)$ is a minimal orbit. Moreover a neighborhood of $T(y)$ is equivariantly diffeomorphic to
$T/T_y \times W \times \ccc$
where $T_y/T'$ acts on $\ccc$ by rotation and $\ccc$ is normal to $N$. Since $N$ is an invariant submanifold, it follows that $N$ is a fixed point component of some subtorus $T''$ such that $T_y\subset T''\subset T$ . Since $T(y)$ is minimal it follows that $N$ is contained in some codimension-two submanifold $F'$ fixed by a circle subgroup of $T$. Applying the above arguments for $F'$ instead of $F$, we see that the action on $F'$ is locally standard and that each face of $F'$ is diffeomorphic to a standard disc after smoothing corners. In particular, this is also true for the action on $N$. The rest of this case is as in the first case of the proof of  Lemma 6.3 \cite{Wie}.
In Case ($1.b$) $T'$ is the identity component of $T_y$. 
As in the proof of Lemma 6.3 \cite{Wie}, it follows that $S^1$ acts freely on $N$.
Moreover $N$ is fixed by a torus $T''\subset T'$ with $\codim(N) = 2\dim(T'') + 1$. Hence it follows that $N/C$ is a closed, non-negatively curved, simply connected manifold with an isotropy-maximal torus action, and so the induction hypothesis applies to $N/C$. The rest of the proof is then as in  \cite{Wie}.

We now treat Case ($2$). Note that in this case we only know that $\pi_1(F)$ is cyclic (see Theorem 3.1 of \cite{ES1}). Therefore there are two cases:
\begin{enumerate}
\item[(2.a)] There is an $S^1\subset T^k$ which acts almost freely on $F$; or
\item[(2.b)] $F$ has finite cyclic fundamental group.
\end{enumerate}
In Case ($2.b$),  the argument for Case ($1$) can be applied 
to the universal cover of $F$ with the lifted torus action. Theorem I.9.1 of \cite{Br} shows that $\tilde{F}/T=F/T$ and that the isotropy subgroups will be the same, since the cover has a finite number of sheets. The rest of the argument then proceeds as in Case ($1$).

If the $S^1$-action is free in Case ($2.a$), one can apply the induction hypothesis to $F/S^1$, which is a closed, simply connected, non-negatively curved manifold with an isotropy-maximal action.
If the $S^1$-action is almost free, then we lift the action to $\tilde{F}$, the universal cover of $F$,  which splits metrically as the product of $\bar{F}^{n-3}\times \rrr$, where $\bar{F}^{n-3}$ is a closed, non-negatively curved, simply connected manifold by the Splitting Theorem of Cheeger and Gromoll \cite{CG}.
In this case, by Theorem I.9.1 of \cite{Br}, we see that the $S^1$ action lifts to an action of $\rrr$ on $\tilde{F}=\bar{F}^{n-3}\times \rrr$. By work of Hano \cite{Hano}, the isometry groups of $\tilde{F}$ split into the product of the isometry groups of the factors, which tells us that the $\rrr$ action is trivial on $\bar{F}^{n-3}$ and free on  $\rrr$. Since $\bar{F}^{n-3}=\tilde{F}/\rrr=F/S^1$, we see that we may once again apply the induction hypothesis to $F/S^1$.

\end{proof}


\subsection{Proof of Case (1) of Theorem \ref{t:thma}}\label{ss51}

  For the sake of simplicity and consistency of notation in what follows,
we set the rank $k$ of the torus action to be $n+p$ and the dimension $n$ of the manifold to be $2n+p$, that is, we set
$$\begin{cases}
k\mapsto n+p\\
n\mapsto 2n+p.\\
\end{cases}$$
Thus, we consider a $T^{n+p}$-action on $M^{2n+p}$ with free rank equal to $p$ and $\text{dim}(M/T)=n$.
Recall that we assume here that the free dimension of the torus action is equal to the free rank.  
As detailed above, we now break the proof into two cases.


\subsubsection{\bf Case (${\bf 1.a}$): The number of facets of ${\bf M/T}$ is equal to the rank of the torus action}
We first consider the following more general situation: let $M^{2n+p}$ be a closed, simply connected,  principal $T^{p}$-bundle over $N^{2n}$, where $N^{2n}$ has a locally standard, smooth $T^n$-action with orbit space $P^n$ as in Display \eqref{2.2}
and the number of facets of  $P$  is equal to $n+p$. In the following theorem we show that such an $M$ 
is  equivariantly diffeomorphic to the moment angle manifold $\mathcal{Z}_P$ as in Display \eqref{2.3}.

 \begin{theorem}\label{momentangle}  Let $M^{2n+p}$ be a closed, simply connected,  principal $T^{p}$-bundle over $N^{2n}$, where $N^{2n}$ has a locally standard, smooth $T^n$-action with orbit space $P^n$ as in Display \eqref{2.2}
and  $n+p$ facets.
 Then $M^{2n+p}$ is equivariantly diffeomorphic to the moment angle manifold $\mathcal{Z}_P$. 
 \end{theorem}
  \begin{proof}

 Since $M$ is a principal $T^p$-bundle over $N$, it is clear that the $T^{n+p}$ action on $M$ is locally standard.
 Then Theorem \ref{c:wpd} establishes that $M$ is equivariantly diffeomorphic to $\mathcal{Z}_{\phi(P)}=\mathcal{Z}_{P}$, the moment angle manifold corresponding to $\phi(P)$, where $\phi:P\rightarrow \phi(P)$ is the weight preserving diffeomorphism of Lemma \ref{Smith}, as desired.
 The diagram below illustrates this case.
 \vspace{.5cm}
\begin{equation*}
\begin{split}
\xymatrix{
& {} &  M^{2n+p} \ar@/_2pc/[dd]_{/T^{n+p}}  \ar[r]^{\cong}   \ar[d]^{/T^{p}}  &  \mathcal{Z}_P^{2n+p}    \ar[ld]^{/T^{p}}   &    \\
& {} & N^{2n}    \ar[d]^{/T^n}    \\
& {}  & P^n    & {}    &  {} }
\end{split}
\end{equation*}
\end{proof}

 We now observe that if  $M^{2n+p}$ is a closed, simply connected, non-negatively curved  manifold, admitting an isotropy maximal  
$T^{n+p}$-action then
Part (1) of Corollary \ref{l:torus} tells us that  $N^{2n}=M^{2n+p}/T^{p}$, the quotient of $M^{2n+p}$ by the free $T^{p}$ action, is a non-negatively curved torus manifold.
 Lemma \ref{locallystandard} gives us that the torus action on $N$ is locally standard.
Proposition \ref{Q} tells us that the orbit space of $N$, $P=M^{2n+p}/T^{n+p}=N^{2n}/T^n$, is a product of simplices and lunar suspensions, as in Display \eqref{2.2}.

The following corollary of Theorem \ref{momentangle} is then immediate, thus proving Case ($1.a$) of Theorem \ref{t:thma}.
  \begin{corollary}\label{momentangle''}  Let $M^{2n+p}$ be a closed, simply connected,  non-negatively curved manifold admitting an isotropy maximal $T^{n+p}$ action with free rank equal to the free dimension and such that the   
  orbit space $P^n$ has $n+p$ facets. Then $P^n$ is as in Display \eqref{2.2} and 
  $M^{2n+p}$ is equivariantly diffeomorphic to the moment angle manifold $\mathcal{Z}_P$. 
 \end{corollary}
 

\subsubsection{\bf Case ($1.b$): The number of facets is strictly greater than the rank of the torus action}
 Again, we consider the following more general situation: let  $M^{2n+p}$ be a closed, simply connected,  principal $T^{p}$-bundle over $N^{2n}$, where $N^{2n}$ has a locally standard, smooth $T^n$-action with orbit space $P^n$ as in Display \eqref{2.2}
and the number of facets of  $P^n$  is strictly greater than $n+p$, with $p>0$.
The following theorem establishes that such an $M$ 
 is  equivariantly diffeomorphic to the quotient of the moment angle manifold $\mathcal{Z}_P$ by a free, linear torus action.

\begin{theorem}\label{t:generalmomentangle}  Let $M^{2n+p}$ be a closed, simply connected  principal $T^{p}$-bundle over $N^{2n}$ where $N^{2n}$ has a locally standard, smooth $T^n$-action with orbit space $P^n$ as in Display \eqref{2.2}, with $m$ facets, where $m > n+p$. We further assume that the bundle of principal orbits is trivial.
Then there is a smooth, principal $T^{m-n-p}$-bundle $\pi: Y^{n+m} \longrightarrow M^{2n+p}$, with $Y^{n+m}$ a closed, simply connected manifold that is $T^m$-equivariantly diffeomorphic to the moment angle manifold $Z_P$.  Thus $M^{2n+p}$ is equivariantly diffeomorphic to the quotient of $\mathcal{Z}_P$  by a $T^{m-n-p}$-action.
\end{theorem}

\begin{proof}
Since $M$ is a principal $T^{p}$-bundle over $N^{2n}$, and $H^2(N^{2n}) \cong \zzz^{m-n}$ (see, for example, Theorem 7.4.35 in \cite{BP1}),  using the Leray-Serre spectral sequence or the homotopy sequence for the bundle we obtain that $H^2(M; \zzz)$ contains $\zzz^{m-n-p}$ as a subgroup. 
Recall that if $Y$ is a principal $T^{m-n-p}$-bundle over $M$ then it is classified by homotopy classes of maps of $M$ into $BT^{m-n-p}$, hence by $[M, BT^{m-n-p}]$.  Since
\bdm
[M, BT^{m-n-p}] \cong [M, BS^1] \times ... \times [M, BS^1] \cong \oplus_{i=1}^{m-n-p} H^2(M; \zzz),
\edm
and $H^2(M; \zzz)$ is non-trivial, it follows that there is a non-trivial principal $T^{m-n-p}$-bundle over $M$, $Y^{m+n}$.

 We now use the fact that $Y$ is a $T^{m-n}$-principal bundle over $N^{2n}$, since in the category of finite dimensional manifolds the composition of two principal bundles is again a principal bundle (see, for example, McKay \cite{M}).  

We must now show that $Y^{m+n}$ can be chosen to be simply connected.
To do so, we apply the argument used in the proof of Proposition 6.4 in \cite{D} to show that that $H_1(Y)$ is trivial. Since $Y$ is a principal $T^{m-n-p}$ bundle over $M$ and $M$ is simply connected, it follows from the long exact sequence in homotopy associated to the fibration that $\pi_1(Y)$ is abelian. Since $H_1(Y)=0$ this implies that $\pi_1(Y)=0$.

 Since $Y$ is simply connected, this means that we are now in the setting of Theorem \ref{momentangle} .  Hence $Y^{m+n}$ is equivariantly diffeomorphic to the moment angle manifold $\mathcal{Z}_P$ as claimed.   The diagram below illustrates this case.

\vspace{.5cm}

\begin{equation*}
\begin{split}
\xymatrix{
 & Y^{n+m}  \ar[rrr]^{\cong}  \ar[d]^{/T^{m-n-p}}& {} & {}  & \mathcal{Z}_P^{n+m}   \ar[llld]^{/T^{m-n-p}}    \ar@/^1pc/[llldd]^{/T^{m-n}}   &   \\
 &  M^{2n+p} \ar@/_2pc/[dd]_{/T^{n+p}}       \ar[d]^{/T^p}  &  {}    &  {}    \\
 &  N^{2n} \ar[d]^{/T^n}   \\
 &  P^n    & {}    &  {} }
\end{split}
\end{equation*}
\end{proof}
As in Case ($1.a$),  given $M^{2n+p}$ a closed, simply connected,  non-negatively curved manifold admitting an isotropy maximal $T^{n+p}$ action with free rank equal to the free dimension and such that the orbit space $P^n$ has $m$ facets where $m > n+p$, it follows that the quotient of $M$ by the free $T^{2k-n}$ action is a non-negatively curved torus manifold with a locally standard  torus action  whose orbit space $P$ is 
 as in Display \eqref{2.2}. The following corollary of Theorem \ref{t:generalmomentangle} is then immediate, thus proving Case ($1.b$) of Theorem \ref{t:thma}.

 \begin{corollary}\label{generalmomentangle} Let $M^{2n+p}$ be a closed, simply connected,  non-negatively curved manifold admitting an isotropy maximal $T^{n+p}$ action with free rank equal to the free dimension and such that the  the orbit space $P^n$ has $m$ facets where $m > n+p$.
Then there is a smooth principal $T^{m-n-p}$-bundle $\pi: Y^{n+m} \longrightarrow M^{2n+p}$. Further, $Y^{n+m}$ is simply connected and is $T^m$-equivariantly diffeomorphic to the moment angle manifold $\mathcal{Z}_P$.  
 Thus $M^{2n+p}$ is equivariantly diffeomorphic to the quotient of $\mathcal{Z}_P$  by a free, linear $T^{m-n-p}$-action.
\end{corollary}


\subsection{The proof  of Case ($2$) of Theorem \ref{t:thma}}\label{ss52}

In this section we will prove Case ($2$) of Theorem \ref{t:thma}, namely when the free dimension does not equal the free rank. 
Again, for the sake of simplicity and consistency of notation in what follows,
we set the rank $k$ of the torus action to be $n+p$ and the dimension $n$ of the manifold to be $2n+p$, that is, we set
$$\begin{cases}
k\mapsto n+p\\
n\mapsto 2n+p.\\
\end{cases}$$
Thus, we consider a $T^{n+p}$-action on $M^{2n+p}$ with free rank equal to $p$ and $\text{dim}(M/T)=n$.
Recall that by Corollary \ref{l:torus}, $M^{2n+p}/T^{p}=X^{2n}$ is a non-negatively curved torus orbifold admitting a $T^{n}$ isometric action.   

Applying Theorem \ref{misre}, we see that $M^{2n+p}$ is rationally elliptic.  For quotients of rationally elliptic manifolds admitting almost free torus actions, we have the following observation.

\begin{observe}\label{MTisre} Let $M$ be a closed,  rationally elliptic manifold admitting an effective, isometric almost free torus action. Then $M/T$ is a closed, rationally elliptic orbifold.
\end{observe}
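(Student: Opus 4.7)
The strategy is to reduce the statement to standard rational-homotopy-theoretic facts via the Borel construction $M_T = ET \times_T M$, which for an almost free $T$-action serves both as a rational model for the orbit space $M/T$ and as the classifying space of the orbifold $[M/T]$. Rational ellipticity of $M/T$ will then follow from rational ellipticity of $M$ together with analysis of the Borel fibration $M \to M_T \to BT$.

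First, I would record that $M/T$ is a closed orbifold: since $T$ acts almost freely, all isotropy groups are finite, and the quotient of a closed manifold by a compact Lie group with finite isotropy is a closed orbifold. The rational ellipticity hypothesis on $M$ includes simple connectivity, and the tail of the long exact homotopy sequence of the Borel fibration gives $\pi_1(M_T)=0$; under the identification $\pi_*^{\mathrm{orb}}(M/T) \cong \pi_*(M_T)$, this says $M/T$ is simply connected as an orbifold.

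Next, I would invoke the standard fact that for an almost free compact Lie group action the projection $M_T \to M/T$ is a fibration whose fibers are classifying spaces $BG_x$ of the finite isotropy groups, hence rationally contractible. This yields $H^*(M/T;\mathbb{Q}) \cong H^*(M_T;\mathbb{Q})$, and compactness of $M/T$ makes this finite-dimensional. For the homotopy condition, I would apply the rational long exact sequence
\begin{equation*}
\cdots \to \pi_i(M)\otimes\mathbb{Q} \to \pi_i(M_T)\otimes\mathbb{Q} \to \pi_i(BT)\otimes\mathbb{Q} \to \pi_{i-1}(M)\otimes\mathbb{Q} \to \cdots
\end{equation*}
of the Borel fibration. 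By hypothesis $\pi_*(M)\otimes\mathbb{Q}$ is finite-dimensional, and $\pi_*(BT)\otimes\mathbb{Q}$ is concentrated in degree two with total dimension $\rk T$; exactness forces $\pi_*(M_T)\otimes\mathbb{Q}$, and therefore $\pi_*^{\mathrm{orb}}(M/T)\otimes\mathbb{Q}$, to be finite-dimensional.

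The argument is essentially formal, so I do not expect a serious obstacle. The only point requiring care is the identification of the orbifold-theoretic rational invariants of $M/T$ used in the paper's definition of a rationally elliptic orbifold with the invariants computed on the Borel construction $M_T$. This identification is standard for quotient orbifolds of almost free compact Lie group actions, but should be cited explicitly so that exactness of the Borel long exact sequence transfers verbatim to the finiteness of the orbifold rational homotopy groups.
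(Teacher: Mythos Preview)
Your proof is correct and reaches the same conclusion as the paper, but the framing differs slightly. The paper works directly with the orbit ``fibration'' $T \to M \to M/T$ and reads off from its long exact homotopy sequence that $\pi_i(M)\cong\pi_i(M/T)$ for $i\geq 3$ together with the short exact sequence $0\to\pi_2(M)\to\pi_2(M/T)\to\pi_1(T)\to 0$; finiteness of rational cohomology is immediate from compactness of $M/T$. You instead route the argument through the Borel construction $M_T$ and the genuine fibration $M\to M_T\to BT$, then invoke the rational equivalence $M_T\simeq_{\mathbb{Q}} M/T$ coming from finite isotropy. The two long exact sequences carry the same information (via $\pi_i(BT)\cong\pi_{i-1}(T)$), so the arguments are essentially dual; your version has the advantage of making the orbifold-versus-quotient identification explicit and of never having to say what ``fibration'' means when the action is only almost free, at the cost of a little extra machinery.
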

\begin{proof}
The simple connectivity of $M/T$ follows directly from Corollary II.6.3 of \cite{Br}.  In addition, since $M/T$ is closed, the  finiteness condition on the rational cohomology groups is automatically satisfied.

Moreover, since  $M$ is rationally elliptic, the  long exact sequence in homotopy associated to the fibration $$ T \hookrightarrow M \rightarrow M\times_T ET$$ tells us that the finiteness condition on the rational homotopy groups of $M\times_T ET$ is satisfied.
  The  long exact sequence in homotopy associated to the fibration 
 $$ET\hookrightarrow M\times_T ET\rightarrow M/T\,,$$
 obtained by projection onto the first factor, 
 implies that  $\pi_i(M\times_E ET)\cong\pi_i(M/T)$ for all $i\geq 1$.   Hence $M/T$ is rationally elliptic.
 \end{proof}

By Observation \ref{MTisre},
$M^{2n+p}/T^{p}=X^{2n}$ is a simply connected,  non-negatively curved, rationally elliptic torus orbifold. The proof of Theorem \ref{REtorusorbifold} shows that the face poset of $M/T$ is combinatorially equivalent to that of the face poset of $P$, where $P$ is as in Display \eqref{2.2}. Theorem 4.2 of \cite{D} then gives us that $M/T$ is as in   Display \eqref{2.2}.


\subsubsection{\bf Case ($2.a$): The number of facets of ${\bf M/T}$ is equal to the rank of the torus action}

We have just seen that the orbit space, $M/T=P$, is as in Display \eqref{2.2} and 
by Proposition \ref{lstd}, the torus action on $M$ is locally standard.   It then follows by the Equivariant Cross-Sectioning Theorem \ref{t:ecst} that a cross-section for the action on $M$ exists. Further, by Theorem \ref{c:wpd}, it follows that 
$M$ is equivariantly diffeomorphic to $\bar{M}=\mathcal{Z}_{\bar P^{n}}$ with the product metric, admitting an isometric, effective action of $T^{n+p}$ with free dimension $p$, and  $\bar{N}^{2n}=\bar{M}/T^{p}$ is a torus manifold  of non-negative curvature.
We summarize this result in the following proposition.
\begin{proposition}\label{2a} Let $M^{2n+p}$ be a closed, simply connected,  non-negatively curved manifold admitting an isotropy maximal $T^{n+p}$ action with free rank equal to $p$ and such that the orbit space $P^n$ has $n+p$ facets. 
 Then $M^{2n+p}$ is equivariantly diffeomorphic to the moment angle manifold $\mathcal{Z}_P$. 
\end{proposition}
\noindent The  diagram below illustrates this case.

\vspace{.5cm}

\begin{equation*}
\begin{split}
\xymatrix{
& {} & M^{2n+p}   \ar@/_2pc/[dd]_{/T^{n+p}}  \ar[d]^{/T^{p}}  \ar[r]^{\cong}  &  \bar M^{2n+p}  \ar[r]^{\cong}   \ar[d]^{/T^{p}}  &  \mathcal{Z}_{\bar P^{n}}  = \mathcal{Z}_{P^{n}}    \ar[ld]^{/T^{p}}    \\
& {} & X^{2n}  \ar[d]^{/T^{n}}  & \bar{N}^{2n}    \ar[d]^{/T^{n}}    \\
& {}  &P^{n} \ar[r]^{\cong} &  \bar P^{n}   & {}    &  {}     }
\end{split}
\end{equation*}

\vspace{.5cm}


\subsubsection{\bf Case ($2.b$): The number of facets of ${\bf M/T}$ is strictly greater than the rank of the torus action}
 Let $m$ be the number of facets of $M/T=P$, with $m-n-p>0$.   
It follows from the proof of Theorem \ref{REtorusorbifold}, that $M$ has the rational homotopy type of the base of a principal $T^{m-n-p}$ bundle with total space the corresponding moment angle manifold, which is a product of spheres of dimensions greater than or equal to three.
In particular, using the long exact sequence in homotopy, this tells us that $H^2(M^{2n+p};\zzz)$ is isomorphic to $\zzz^{m-n-p}$.   Note that principal $T^{m-n-p}$-bundles over $M$ are classified by $m-n-p$ elements $\beta_1,\dots, \beta_{m-n-p} \in H^2(M;\zzz)$.  Here each $\beta_i$ can be described as the Euler class of the oriented circle bundle $Y/T^{m-n-p-1} \longrightarrow M$ where $Y$ is the total space and $T^{m-n-p-1} \subset T^{m-n-p}$ is the subtorus with the $i$-th $T^1$-factor deleted.  
It follows that there exists a $T^{m-n-p}$-principal bundle over $M^{2n+p}$ with total space $Y^{n+m}$, a closed, simply connected Riemannian manifold. Further, by Theorem \ref{FukayaYamaguchi}, $Y^{n+m}$ admits a family of almost non-negatively curved metrics and by construction, its quotient space is non-negatively curved. 

As in Case (2a), using Proposition \ref{lstd}, the Equivariant Cross-Sectioning Theorem \ref{t:ecst}, and Theorem \ref{c:wpd}, it follows that $Y^{n+m}$ is equivariantly diffeomorphic to 
$\bar Y^{n+m}$, and that $\bar Y^{n+m}$ admits an isometric and effective  $T^{m}$-action with free dimension equal to $m-n$. Moreover,  $\bar Y^{n+m}$, with the pullback metric from the product metric on 
$\mathcal{Z}_{\bar{P}^n}$, is non-negatively curved. We can now apply Corollary \ref{generalmomentangle} to conclude that $\bar{M}^{2n}$ is the quotient of a free torus action on $\bar{Y}^{n+m}$, which is equivariantly diffeomorphic to the moment angle manifold $\mathcal{Z}_P$. Since $\bar{Y}^{n+m}$ is equivariantly diffeomorphic to $Y^{n+m}$, 
by commutativity of the diagram, we can then conclude that $M$ is equivariantly diffeomorphic to the quotient of the moment angle manifold $\mathcal{Z}_P$ by a free torus action.

 Hence 
$\bar M^{2n} =\bar N^{n+m}/T^{m-n}$ is a non-negatively curved torus manifold. We summarize this result in the following proposition.
\begin{proposition}\label{2b}
Let $M^{2n+p}$ be a closed, simply connected,  non-negatively curved manifold admitting an isotropy maximal $T^{n+p}$ action with free rank equal to $n+p$ and such that the  the orbit space $P^n$ has $m$ facets, where $m > n+p$.
Then there is a smooth principal $T^{m-n-p}$-bundle $\pi: Y^{n+m} \longrightarrow M^{2n+p}$. Further, $Y^{n+m}$ is simply connected and is $T^m$-equivariantly diffeomorphic to the moment angle manifold $\mathcal{Z}_P$.  
Thus, $M^{2n+p}$ is equivariantly diffeomorphic to the quotient of $\mathcal{Z}_P$  by a free $T^{m-n-p}$-action.

\end{proposition}

\noindent The diagram below illustrates this case. 
\vspace{.5cm}

\begin{equation*}
\begin{split}
\xymatrix{
& {} & Y^{n+m}   \ar[d]^{/T^{m-n-p}}  \ar[r]^{\cong}  &  \bar Y^{n+m}    \ar[dd]^{/T^{m-n}}  \ar[rr]^{\cong}  & {}  &  \mathcal{Z}_{\bar P^{n}}  =\mathcal{Z}_{ P^{n}} \ar[lldd]^{/T^{m-n}}   \\
& {} & M^{2n+p}   \ar@/_2pc/[dd]_{/T^{n+p}}  \ar[d]^{/T^{p}}  \\
& {} & X^{2n}  \ar[d]^{/T^{n}}  & \bar N^{2n}    \ar[d]^{/T^{n}}    \\
& {}  &P^{n} \ar[r]^{\cong} &  \bar P^{n}   & {}    &  {}     }
\end{split}
\end{equation*}

\vspace{.5cm}

Propositions \ref{2a} and \ref{2b} give us that $M^{2n+p}$ is equivariantly diffeomorphic to the quotient of the moment angle manifold $\mathcal{Z}_P$ by a freely acting torus, where $P=M/T$. So, in order  to finish the proof of Case (2) of Theorem \ref{t:thma}, it only remains to show that the $T^{m-n-p}$-action on $\mathcal{Z}_P$ is linear. 

We  use the cross-sections $c_1$ and $c_2$ given by the Equivariant Cross Sectioning Theorem \ref{t:ecst} to construct an equivariant diffeomorphism from $M^{2n+p}$ to $\bar{M}^{2n+p}$.  Then $M^{2n+p} \cong \bar{M}^{2n+p} \cong  \mathcal{Z}_{\tilde P^{n}}/T^{m-n-p}$ and 
$T^{m-n-p}$ is a free linear action since it is sub-action of the free linear action of  $T^{m}$ on $\mathcal{Z}_{\tilde P^{n}}$. The  diagram below illustrates the proof.

\begin{equation*}
\begin{split}
\xymatrix{
 & Y^{n+m}    \ar[d]^{/T^{m-n-p}}  \ar[rr]^{\cong}  &{} &  \bar Y^{n+m}  \ar[d]^{/T^{m-n-p}}  \ar@/^2pc/[dd]_{}    \ar[rr]^{\cong} & {} &  \mathcal{Z}_{\bar P^{n}} =\mathcal{Z}_{P^{n}}   \ar[lldd]^{/T^{m-n}}    \\
 & M^{2n+p}   \ar[d]^{/T^{p}}  \ar@{-->}[rr]^{\cong}  & {} &  \bar{M}^{2n+p}  \ar[d]^{} \\
 & X^{2n}  \ar[d]^{/T^{n}} & {} &  \bar N^{2n}    \ar[d]^{/T^{n}}    \\
 &P^{n}  \ar@/^2pc/[uuu]^{c_1}   \ar[rr]^{\cong} & {}    &\bar P^{n} \ar@/^2pc/[uuu]^{c_2}   & {}    &  {}     }
\end{split}
\end{equation*}

\section{Almost isotropy-maximal is isotropy-maximal for $ k\geq\lfloor 2n/3\rfloor$} \label{s5}

 In this section we  establish that for a closed, simply-connected, non-negatively curved $n$-manifold, an almost isotropy-maximal $T^k$-action is isotropy-maximal if the rank of the action is greater than or equal to $\lfloor 2n/3\rfloor$.  This gives us an extension of Theorem \ref{t:thma} to almost isotropy-maximal actions with rank greater than or equal to $\lfloor 2n/3\rfloor$.

\begin{theorem}\label{t:amism}
Let $T^k$ act isometrically, effectively and almost isotropy-maximally on $M^n$, a simply connected, closed, non-negatively curved Riemannian $n$-manifold, with $k\geq\lfloor 2n/3\rfloor$.
Then the action is isotropy-maximal.
\end{theorem}

Before we begin the proof of Theorem \ref{t:amism}, we first need the following topological result for manifolds admitting a disk bundle decomposition. Here we let $\rk(G)$  denote the number of $\zzz$ factors in the  finitely generated abelian group $G$.

\begin{lemma}\label{l:pi1}  Let $M$ be a  manifold with $\rk(H_1(M; \zzz))=k$, $k\in\zzz^+$. If $M$ admits a disk bundle decomposition 
\bdm
M=D(N_1)\cup_{E} D(N_2), 
\edm
where $N_1$, $N_2$ are smooth submanifolds of $M$, and $N_1$ is orientable and of codimension-two, then both 
$\rk(H_1(N_1; \zzz))$ and $\rk(H_1(N_2; \zzz))$ are less than or equal to $k+1$.
 
\end{lemma}
 \begin{proof}
It follows from the Mayer Vietoris sequence of the decomposition and the hypothesis on the rank of $H_1(M)$ that the  sequence 
\beq\label{e:3}
 H_1(E) \rightarrow H_1(N_1)\oplus H_1(N_2)\rightarrow \zzz^k\oplus \Gamma\rightarrow 0
\eeq
 is exact, where $\Gamma$ is a finite abelian group. Since $E$ is a circle bundle over $N_1$, it follows from  the Gysin sequence for homology that  $\rk(H_1(E))\leq 1+\rk(H_1(N_1))$.  The same statement follows for $\rk(H_1(N_2))$ since 
$E$ is either a circle or sphere bundle over $N_2$.  
Using these facts and the exactness of the sequence in Display (\ref{e:3}),  it follows that 
\bdm
\rk(H_1(N_1))+ \rk(H_1(N_2))-k\leq \rk(H_1(E))\leq \rk(H_1(N_1))+ 1,
\edm

\bdm
\rk(H_1(N_1))+ \rk(H_1(N_2))-k\leq \rk(H_1(E))\leq \rk(H_1(N_2))+ 1,
\edm
and the lemma is proven.
\end{proof}

\begin{proof}[Proof of Theorem \ref{t:amism}] The proof is by contradiction. Let $x\in M$ belong to an almost minimal orbit, $T(x)\cong T^k/T^{n-k-1} = T^m$, where $m=2k-n+1$. Then $T_x\cong T^{n-k-1}$ acts on the unit normal 
$S^{2n-2k-2}$ to $T(x)$, and this action is  both isotropy-maximal and of maximal symmetry rank.
 In fact, both the isotropy and the torus action are nested $S^1$-fixed point homogeneous. That is,  there is a nested tower of fixed point sets containing the smallest orbit $T^m$, as follows:
\bdm
T^{m}\subset N^{m+1}\subset N^{m+3}\subset  \cdots \subset N^{n-2}\subset N^n=M^n,
\edm
and $T^m$ and $N^{m+1}$ are both fixed by $T^{k-m}$. 
Note that since $M^n$ is nested $S^1$-fixed point homogeneous, each $N^l$ is a non-negatively curved $S^1$-fixed point homogeneous manifold.  So it follows by Theorem \ref{Spindeler} that each $N^l$ admits a disk bundle decomposition as in the statement of Lemma \ref{l:pi1}, that is,  
$N^l=D(N_1)\cup D(N_2)$, where $N_1=N^{l-2}$.

The induced action of $T^k/T^{k-m}=T^m$ on $N^{m+1}$ is by cohomogeneity one. We claim that this action must have circle isotropy, which in turn implies that the action of $T^k$ must have been isotropy-maximal to begin with.
If it does not, then by the classification of cohomogeneity one torus actions (see  \cite{Mo},  \cite{Neu}, \cite{Pa}, \cite{Pak}), $N^{m+1}$ is equivariantly diffeomorphic to $T^{m+1}$ and has first integer homology group $\zzz^{m+1}$. However,  applying Lemma \ref{l:pi1}  successively to each $N^l$ containing $N^{m+1}$ in the tower for a total of $\frac{n-m-1}{2}=k-m$ times, 
 we see that the number of $\zzz$ factors in $H_1(N^{m+1};\zzz)$ must be less than or equal to $k-m$. Note that $k-m<m+1$ if and only if $k\geq \lfloor 2n/3\rfloor$, giving us the desired contradiction.
\end{proof} 

With this result, we now obtain  the following extension of Theorem \ref{t:thma}. 

\begin{theorem}\label{aimthma} Let $T^k$ act isometrically and effectively on $M^n$, a closed, simply connected, non-negatively curved Riemannian manifold, with $k\geq \lfloor 2n/3\rfloor$. Assume that the action is  almost isotropy-maximal. 
Then $M$ is equivariantly diffeomorphic to a quotient of 
$\mathcal{Z}^m$ by a free linear torus action.
\end{theorem}

The proof of Theorem \ref{aimthma} is completely analogous to the proof of Theorem \ref{t:thma} using Theorem \ref{t:amism} combined with the following remark.
\begin{remark}
By Theorem \ref{t:amism}, 
Corollary \ref{l:torus} holds also when the free rank is equal to $2k-n+1$ for $k\geq \lfloor 2n/3\rfloor$.
\end{remark}


\section{The Proofs of the remaining results} \label{s7}
We now present proofs of Corollary \ref{corb} and Theorem \ref{MSRNN}, as well as a streamlined proof of the Maximal Symmetry Rank Conjecture in dimensions less than or equal to $6$. 
We begin with a proof of Corollary \ref{corb}.
\begin{proof}[Proof of Corollary \ref{corb}]
Note that while we can simply appeal to results in \cite{GGKR} to prove the upper bound on the rank,  
we will give a constructive proof, as it is straightforward and quite simple.

We assume then that $k=\lfloor 2n/3\rfloor+s$, with $s>0$ to obtain a contradiction. Since the action is isotropy-maximal, the free rank is equal to $2k-n$ and 
in particular, we see that $X^{2n-2k}=M^n/T^{2k-n}$ is a torus orbifold. By Theorem \ref{t:thma}, $M^n$ is equivariantly diffeomorphic to the free, linear quotient by a torus of a product of spheres of dimension greater than or equal to  three. This product of spheres can have dimension at most $3n-3k$. So, $n$ must be less than or equal to $3n-3k$. However, a simple calculation shows that for 
$s\geq 1$, $$3n-3k\leq n+2-3s<n,$$ which gives us a contradiction. Hence $k\leq \lfloor 2n/3\rfloor$, as desired.
\end{proof}

Before we prove Theorem \ref{MSRNN}, we first recall Lemma 4.4 from \cite{GGS2}.
\begin{lemma}\cite{GGS2} \label{nonempty} Let $T^{n-3}$ act on $M^{n}$, a closed, simply connected smooth manifold. Then some circle subgroup has non-trivial fixed point set.
\end{lemma}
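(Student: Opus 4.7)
The plan is to argue by contradiction: assume that no circle subgroup of $T^n$ has a nonempty fixed point set. First I would upgrade this to almost freeness of the full $T^n$-action. Any isotropy subgroup $T^n_p \le T^n$ is a closed subgroup of a torus, hence isomorphic to $T^j \times F$ with $F$ finite; if $j \ge 1$, some circle in the identity component fixes $p$, contrary to the hypothesis. So $T^n$ acts almost freely and $B = M^{n+3}/T^n$ is a closed $3$-dimensional orbifold; orientability of $M$ (inherited from simple-connectivity) together with connectedness of $T^n$ makes $B$ orientable.

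The orbifold fibration $T^n \to M \to B$ yields the exact sequence
\[
\pi_1(T^n) \longrightarrow \pi_1(M) \longrightarrow \pi_1^{\mathrm{orb}}(B) \longrightarrow 1,
\]
and $\pi_1(M) = 0$ forces $\pi_1^{\mathrm{orb}}(B) = 1$. Rational Poincar\'e duality for closed orientable $3$-orbifolds then gives
\[
H^2(B;\mathbb{Q}) \cong H^1(B;\mathbb{Q}) \cong \mathrm{Hom}(\pi_1^{\mathrm{orb}}(B), \mathbb{Q}) = 0.
\]

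To obtain a contradiction I would compare this with the Borel construction. Because the action is almost free, the Borel space $M_{T^n} = ET^n \times_{T^n} M$ is rationally homotopy equivalent to $B$, so $H^2(M_{T^n};\mathbb{Q}) = 0$. On the other hand, in the rational Serre spectral sequence of the Borel fibration $M \to M_{T^n} \to BT^n$, the degree-two generators $x_1, \ldots, x_n \in H^2(BT^n;\mathbb{Q})$ live in $E_2^{2,0}$. The only differential that could hit that bidegree before $E_\infty$ is $d_2 \colon E_2^{0,1} \to E_2^{2,0}$, whose source is $H^1(M;\mathbb{Q}) = 0$ by simple-connectivity, so each $x_i$ survives to $E_\infty^{2,0}$ and injects into $H^2(M_{T^n};\mathbb{Q})$. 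Thus $\dim_{\mathbb{Q}} H^2(M_{T^n};\mathbb{Q}) \ge n \ge 1$, contradicting the previous vanishing.

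The main technical steps to justify carefully are the rational Poincar\'e duality for the closed orientable $3$-orbifold $B$ and the rational equivalence $M_{T^n} \simeq_{\mathbb{Q}} M/T^n$ for the almost free torus action; both are standard in the transformation groups literature (the latter follows because the fibers $BG_p$ of $M_{T^n} \to M/T^n$ are rationally contractible when $G_p$ is finite), but invoking them explicitly is where the real content sits. The rest is a routine spectral sequence computation in the Conner--Raymond spirit.
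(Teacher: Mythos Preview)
The paper does not prove this lemma; it is quoted verbatim from \cite{GGS2} and used as a black box in the proof of Proposition~D. So there is no in-paper argument to compare against.

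Your proof is correct and is exactly the kind of Conner--Raymond style argument one expects. A couple of minor comments. First, you can bypass the orbifold language entirely: since every isotropy group is a finite subgroup of $T^n$, hence finite abelian inside $SO(3)$, each local slice quotient $\mathbb{R}^3/G_p$ is homeomorphic to $\mathbb{R}^3$, so the underlying space $B=M/T^n$ is already a closed topological $3$-manifold. Corollary~6.3 of Chapter~II in \cite{Br} (cited in the paper) gives $\pi_1(B)=1$ directly from $\pi_1(M)=1$ and connectedness of $T^n$, and ordinary Poincar\'e duality on $B$ then yields $H^2(B;\mathbb{Q})=0$. This avoids having to justify orbifold Poincar\'e duality and the identification $H^1(B;\mathbb{Q})\cong\mathrm{Hom}(\pi_1^{\mathrm{orb}}(B),\mathbb{Q})$, which mixes orbifold and ordinary invariants in a way that, while ultimately harmless over $\mathbb{Q}$, invites confusion. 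Second, your spectral sequence step is clean and needs no change; the survival of the $x_i$ to $E_\infty^{2,0}$ is immediate from $H^1(M;\mathbb{Q})=0$.
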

\noindent We  also need the following proposition, which combines the results of Proposition 4.5 and Corollary 5.6 from \cite{GGS2}.
\begin{proposition}\cite{GGS2}\label{corollary5.6} Let $M^{n}$ be a closed, non-negatively curved manifold with an isometric $T^{n-3}$-action. Suppose that $M/T=S^3$ and that there are isolated $T^{n-4}$ orbits. Then the number of such isolated  $T^{n-4}$ orbits is bounded below by $n-2$ and above by $4$.  In particular, if $n\geq 7$, then there are none.
\end{proposition}

Now, Theorem \ref{MSRNN} follows immediately by combining Theorem \ref{thmc} with the following proposition. 

\begin{proposition}\label{max} Let $M^n$ be a closed, simply connected, non-negatively curved Riemannian manifold  admitting an isometric, effective cohomogeneity three torus action.
If $n\geq 7$, then the action is isotropy-maximal.
\end{proposition}

\begin{proof}

Lemma \ref{nonempty} tells us that a cohomogeneity three torus action on a closed, simply connected manifold must have isotropy subgroups of rank at least $1$.
An effective cohomogeneity $l$ torus action can have isotropy subgroups of rank at most $l$, and an action with isotropy $T^l$ or $T^{l-1}$ will be isotropy-maximal or almost isotropy-maximal, respectively.  Since $n-3\geq\lfloor 2n/3\rfloor$ for $n\geq 7$, Theorem \ref{t:amism} then gives us that an almost  isotropy-maximal action is isotropy-maximal. So, in order to prove the proposition, it suffices to show that such a cohomogeneity three torus action  must be almost isotropy-maximal, that is, we must show that there is an orbit with isotropy subgroup of  rank at least $2$.
We suppose that all orbits have isotropy of rank $1$ to obtain a contradiction. There are two cases to consider: Case (1), where the action has non-isolated orbits of circle isotropy, and Case (2), where the action has only isolated orbits of circle isotropy.

We begin with Case (1), where the action has non-isolated orbits of circle isotropy.
Note that for a cohomogeneity three torus action, if there is only circle isotropy and the corresponding orbit is not isolated, then it  follows that there is a circle acting fixed point homogeneously. 
The corresponding codimension-two fixed point set of the circle, $N^{n-2}$, admits an induced $T^{n-3}/T^1=T^{n-4}$-action of cohomogeneity two.  This action is either free or almost free. By Theorems 12.3 and 12.15 of Conner and Raymond \cite{CR} (see also \cite{OR3}), for the action $(T^{n-1}, N^{n+1})$, with $n\geq 4$, one can find a splitting $T^{n-1}=T^{n-2}\times T^1$ and a finite abelian subgroup $\Gamma\subset T^{n-2}$ so that  $(T^{n-2}, N^{n+1})$ is fibered equivariantly over  $(T^{n-2}, T^{n-2}/\Gamma)$, with fiber $M^3$ and structure group $\Gamma$. Further, since $N^{n+1}$ is non-negatively curved, it follows that $\pi_1(M^3)$ is finite (see \cite{OR1}).
Using the low degree terms in the Leray-Serre-Atiyah-Hirzebruch sequence (see \cite{DK}) we obtain that $\pi_*: H_1(N^{n+1}) \longrightarrow H_1(T^{n-2})$ is surjective, 
where $\pi: N^{n+1} \longrightarrow T^{n-2}$ is the projection map.  Hence $\rk(H_1(N^{n+1})) \ge n-2 > 1$, yielding a contradiction by Lemma \ref{l:pi1}.

We now consider Case (2),  where the action has only isolated orbits of circle isotropy. Recall from Corollary IV.4.7  of \cite{Br} that the quotient space, $M^*$, of a cohomogeneity three $G$-action on a compact, simply connected manifold with connected orbits
is a simply connected $3$-manifold with or without boundary. 
Note that when there is only isolated circle isotropy for a cohomogeneity three torus action, the quotient space will not have boundary and thus, by the resolution of the Poincar\'e conjecture (see Perelman \cite{P1, P2, P3}), we have that $M^*=S^3$.
 Therefore, we may apply Proposition \ref{corollary5.6} to obtain a contradiction.

 The desired result then follows.
\end{proof}

Finally, we present a significantly streamlined proof of the Maximal Symmetry Rank Conjecture for dimensions less than or equal to $6$ (cf. \cite{GGS1}, \cite{GGK}).

 \begin{theorem}\label{msr6} Let $M^n$, $2\leq n \leq 6$ be a closed, simply connected, non-negatively curved manifold admitting an effective, isometric torus action. Then the Maximal Symmetry Rank Conjecture holds.
\end{theorem}

\begin{proof} 
We begin with the following observation.

 \begin{observe}\label{observe} For a cohomogeneity one torus action on a closed, simply connected manifold of dimension $n\geq 2$, without curvature restrictions, the action is isotropy-maximal. This result  also holds for a cohomogeneity two torus action on a closed, simply connected $n$-manifold, for $n\geq 4$ (see Theorem 1.3 in \cite{KMP}).
\end{observe}

Combining  Theorem \ref{thmc}
 with Observation \ref{observe}, we obtain the desired result 
 (cf. \cite{GGS1}, \cite{GGK}).
 \end{proof}
 
  One notes that in order to extend these classification results 
 to higher dimensions, it suffices to show that an action of maximal symmetry rank must be either almost isotropy-maximal or isotropy-maximal.  One possible course of action would be to establish the existence of an 
 upper bound on the free rank of the action that is strictly less than the rank of the action; that is, show that some circle has non-empty fixed point set. However, to date, there are no known obstructions for a cohomogeneity $m$ torus action, $m\geq 4$, on an $n$-dimensional closed, simply connected, non-negatively curved Riemannian manifold with $n\geq 10$ to be free (see Kobayashi and Nomizu \cite{KN}, Angulo-Ardoy, Guijarro and Walschap \cite{AAGW}).



\end{document}